\newcolumntype{P}[1]{>{\centering\arraybackslash}p{#1}}
\newcolumntype{M}[1]{>{\centering\arraybackslash}m{#1}}
\renewcommand{\leq}{\leqslant}
\renewcommand{\geq}{\geqslant}
\newcommand{\inner}[2]{\langle #1, #2 \rangle}
\newcommand{\innerl}[2]{\left\langle #1, #2 \right\rangle}
\renewenvironment{cases}[1][l]{\matrix@check\cases\env@cases{#1}}{\endarray\right.}
\def\env@cases#1{%
  \let\@ifnextchar\new@ifnextchar
  \left\lbrace\def\arraystretch{1.2}%
  \array{@{}#1@{\quad}l@{}}}
\DeclareMathOperator{\E}{\mathbb{E}\,}
\DeclareMathOperator{\rank}{rank}
\DeclareMathOperator{\tr}{tr}
\DeclareMathOperator{\diag}{\textrm{diag}}
\DeclareMathOperator{\Crit}{Crit}
\DeclareMathOperator{\Proj}{\mathcal{P}}
\theoremstyle{plain}
\newtheorem{theorem}{Theorem}[section]
\newtheorem{assumption}{Assumption}[section]
\newtheorem{open problem}{Open Problem}[section]
\newtheorem{corollary}{Corollary}[section]
\newtheorem{proposition}{Proposition}[section]
\newtheorem{lemma}{Lemma}[section]
\numberwithin{equation}{section}
\newtheorem{remark}{Remark}[section]
\newtheorem{definition}[theorem]{Definition}
\begin{document}

\begin{frontmatter}
\title{Efficient learning of hidden state LTI state space models of unknown order}
\runtitle{Learning LTI state space models of unknown order}

\begin{aug}
\author[A]{\inits{B.}\fnms{Boualem} \snm{Djehiche}\ead[label=e1,mark]{boualem@kth.se}}
\and
\author[A]{\inits{O.}\fnms{Othmane} \snm{Mazhar}\ead[label=e2,mark]{othmane@kth.se}}

\address[A]{Department of Mathematics,
KTH Royal Institute of Technology, Stockholm, Sweden.
\printead{e1,e2}}
\end{aug}

\begin{abstract}
The aim of this paper is to address two related estimation problems arising in the setup of hidden state linear time invariant (LTI) state space systems when the dimension of the hidden state is unknown. Namely, the estimation of any finite number of the system's Markov parameters and the estimation of a minimal realization for the system, both from the partial observation of a single trajectory. For both problems, we provide statistical guarantees in the form of various estimation error upper bounds, $\rank$ recovery conditions, and sample complexity estimates.

Specifically, we first show that the low $\rank$ solution of the Hankel penalized least square estimator satisfies an estimation error in $S_p$-norms for $p \in [1,2]$ that captures the effect of the system order better than the existing operator norm upper bound for the simple least square. We then provide a stability analysis for an estimation procedure based on a variant of the Ho-Kalman algorithm that improves both the dependence on the dimension and the least singular value of the Hankel matrix of the Markov parameters. Finally, we propose an estimation algorithm for the minimal realization that uses both the Hankel penalized least square estimator and the Ho-Kalman based estimation procedure and guarantees with high probability that we recover the correct order of the system and satisfies a new fast rate in the $S_2$-norm with a polynomial reduction in the dependence on the dimension and other parameters of the problem.

\end{abstract}

\begin{keyword}[class=MSC]
\kwd[Primary ]{62J07}
\kwd{62M05}
\kwd{62M10}
\kwd{62M15}
\kwd[; secondary ]{60E15}
\kwd{62C20}
\kwd{62F10}
\end{keyword}

\begin{keyword}
\kwd{Hiden State LTI State Space model} \kwd{System Identification} \kwd{Markov parameters} \kwd{Subspace methods} \kwd{Sample complexity} \kwd{System Order recovery}
\end{keyword}

\end{frontmatter}

\section{Introduction}

Many control design and synthesis techniques rely on an accurate description of the system as a state space model. Deriving such accurate description is an important problem in system identification with far-reaching applications in many areas including time series analysis \cite{citeulike:105953}, economics \cite{GVK237148978}, robotics \cite{10.5555/1643720} and aeronautics \cite{JUNKINS2010xiii}, to name a few. While in some cases it is possible to derive such models due to the simple structure of the underlying phenomena involved in the dynamical evolution \cite{Ljung89,DomitillaMurray}, there are many instances where such an approach is intractable because the system is too complex or some of the involved phenomena are not well understood. In those cases, one adopts a so-called black-box approach and learns the system from the input/output data generated in an experimental setting with little to no assumptions on the real system. 

In recent years, there has been an increased interest in providing non-asymptotic statistical guarantees in the form of estimation error upper bounds and sample complexity estimates for data-driven estimation procedures for state space models \cite{774109,1024346,VIDYASAGAR2008421,SHIRANIFARADONBEH2018342,pmlr-v75-simchowitz18a,DBLP:journals/corr/abs-1812-01251}. While there is a plethora of estimation procedures for learning state space models most of which are well understood in the asymptomatic regime derived e.g. in  \cite{Ljung89,980485,10.1016/S0005-1098(99)00174-0,VanOverschee1996}, modern estimation setups present additional challenges that are not taken into account in an asymptomatic study. For example, in estimation based solution to the linear reinforcement learning problem \cite{pmlr-v144-lale21b} one would aim to obtain a highly accurate estimate of the dynamical system as fast as possible before moving to the control part or alternate between estimation and control in such a way to strike a trade-off between the exploration and the exploitation part. In these cases, asymptotic results are of limited use; the more accurate measure of estimation performance would be through the non-asymptotic estimation error and the sample complexity. Beyond the reinforcement learning use, non-asymptotic statistical guarantees are also used in conjunction with robust control techniques \cite{8618658,tu2017nonasymptotic}, in control design using the Markov parameters \cite{411242,FURUTA19951317}, and as theoretical guidelines for practical heuristics such as bootstrapping to establish high-probability confidence intervals \cite{Dean2020}. Several authors provided such estimates for observed LTI state space models, and the results are essentially optimal in the sense that upper and lower bounds for both the estimation error and sample complexity match up to logarithmic terms and unknown multiplicative constants \cite{pmlr-v75-simchowitz18a}. However, the situation is not as clear-cut for Hidden state LTI state space models. In a realistic setup, these systems present the additional challenge of not knowing the dimension of the state. In the absence of precise estimation lower bounds, the estimation upper bounds presented in the literature so far do not capture well the effect of the system dimension and dynamic on the non-asymptotic estimation error and sample complexity incurred by the studied estimation algorithms. 

\subsection{Problem statement and preliminaries} \label{sec: Problem statement and preliminaries}
The present study aims to provide computationally effective estimation procedures satisfying sharp non-asymptotic estimation error bounds and sample complexity estimates when used for learning the parameters of hidden state LTI state space models of unknown order from the partial observation of a single trajectory of the system. We then have to deal with two ambiguities:
\begin{itemize}
    \item The hidden dimension of the parameters is not well defined from an input/output standpoint.
    \item The hidden state LTI state space model parameters' are defined only up to a similarity transform.
\end{itemize}
In this section, we make these two claims more precise, introduce some necessary preliminaries from realization theory and provide a precise statement for the aim of the study.    

To this end, we consider the following formulation for LTI state space models.
 \begin{align} \label{LTI} 
 \begin{cases}[r]
 x_{i+1} &\hspace{-8pt} = A_0x_i + B_0u_i + w_i,\\
 y_i &\hspace{-8pt} = C_0x_i + v_i,
 \end{cases}
 \end{align}
where $x_i \in \mathbb{R}^{d_0}$ is the hidden state variable of unknown dimension $d_0$ and $u_i \in \mathbb{R}^{r}$ are iid multivariate normal sequences $\mathcal{N}(0,\sigma_u^2 I_r)$. They excite the system to generate the output sequence $y_i \in \mathbb{R}^p$. $w_i$ respectively $v_i$ are the iid multivariate normal state noise sequence $\mathcal{N}(0,\sigma_w^2 I_{d_0})$ and the output noise sequence $\mathcal{N}(0,\sigma_v^2 I_p)$, respectively. These centred Gaussian random vectors can be replaced by centred subGausian centred random vectors of appropriate $\psi_2$ norm upper bounds and all the results remain the same. The linear dynamic is then described by the parameters $(A_0,B_0,C_0)$ with $A_0 \in \mathcal{M}_{d_0 \times d_0}(\mathbb{R})$, $B_0 \in \mathcal{M}_{d_0 \times r}(\mathbb{R})$, and $C_0 \in \mathcal{M}_{p \times d_0}(\mathbb{R})$. If we eliminate the state variable $x_i$, we obtain the so-called input/output (I/O) description of the system:
\begin{align}
    y_n &= \sum \limits_{i=0}^{n-1} C_0A_0^{n-1-i}B_0u_i + \sum \limits_{i=1 0}^{n-1} C_0A_0^{n-1-i}w_i + v_{n} \nonumber\\
    &= \sum \limits_{l=t-2T+1}^{t-1} C_0A_0^{t-1-l}Bu_l + \sum \limits_{l=0}^{t-2T} C_0A_0^{t-1-l}B_0u_l +\sum \limits_{l=0}^{t-1} C_0A_0^{t-1-l}w_l + v_{t} \nonumber \\
    &:= g_0X_t + \bar{g}_0 \bar{X}_t + hW_t  + v_t, \label{eq: IO notation}
\end{align}
where, $\bar{N}:= N-2T+1$,
\begin{equation}\label{eq:IO-1 notation}
\begin{array}{lll}
    X_l := [u^*_{l-1},u^*_{l-2},\dots,u^*_{l-2T+1}]^* \in \mathbb{R}^{(2T-1)r}\\
    \bar{X}_l := [u^*_{l-2T},u^*_{l-2T-1},\dots,u^*_0,0,\dots,0]^* \in \mathbb{R}^{\bar{N}r}\\
    W_l := [w^*_{l-1},w^*_{l-2},\dots,w^*_0,0,\dots,0]^* \in \mathbb{R}^{Nd_0},
\end{array}
\end{equation}
and
\begin{equation}\label{eq:IO-2 notation}
\begin{array}{lll}
    g_0 := [C_0B_0,C_0A_0B_0,\dots,C_0A_0^{2T-2}B_0] \in \mathcal{M}_{p\times (2T-1)r }(\mathbb{R})\\
    \bar{g}_0 := [C_0A_0^{2T-1}B_0,C_0A_0^{2T}B_0,\dots,C_0A_0^{N-1}B_0] \in \mathcal{M}_{p\times \bar{N}r }(\mathbb{R})\\
    h := [C_0,C_0A_0,\dots,C_0A_0^{N-1}] \in \mathcal{M}_{p \times Nd_0}(\mathbb{R}).
\end{array}
\end{equation}
Since we want to estimate the parameter $g_0$, the part $\bar{g}_0 \bar{X}_t + hW_t  + v_t$ will play the role of a disturbance that we will refer to as the noise part. To obtain a successful estimator of $g_0$,  this part should not grow arbitrarily large. To this end, we impose an assumption on the growth of the powers of the  estimated matrix in terms of its spectral radius that we recall in the next definition.
\begin{definition}
    The spectral radius of a matrix $A$ is defined as $\rho(A) := \min_{\lambda \in \text{SP}(A)} |\lambda|$, where $SP(A)$ is the spectrum (the set of all eigenvalues) of $A$.
\end{definition}
\begin{assumption} \label{Assumption: Stability}
We assume that the system \eqref{LTI} is stable in the sense that the spectral radius $\rho(A_0)$ is strictly less than $1$.   
\end{assumption}
By applying the Jordan decomposition to the matrix $A$, we readily see that there exists a positive constant $\psi_A$ depending only on $A$ such that for all $k \in \mathbb{N}$ we have 
\begin{equation} \label{eq: relation operator norm and spectral radius}
    |A^k|_{S_\infty}\leq \psi_A\rho(A^k).
\end{equation}
The system identification problem in this setup would be to estimate the parameters $(A_0,B_0,C_0)$ given that we observe a single realization of $(X_i,y_i)_{i=2T}^N$ while we do not have access to the sequence $(x_i)_i$ and in particular we do not know the dimension $d_0$. From \eqref{eq: IO notation} we notice that the sequence $(y_i)_i$ is related to $(u_i)_i$ in a causal fashion only through the factors $(CA^iB)_i$, commonly referred to as the Markov parameters associated with the system $(A_0,B_0,C_0)$.

\medskip We note that for any similarity transform $S$, the parameters $(A_0,B_0,C_0)$ and their transforms  $(SA_0S^{-1}, SB_0,C_0S^{-1})$ give the same values for the Markov parameter vector $g_0$. This makes the problem of learning the parameters $(A_0,B_0,C_0)$ from observations up to time $N$ of a single trajectory $(u_i,y_i)_i$ not well defined. One can only learn a representative of the equivalence class defined by the parameters $(SA_0S^{-1},SB_0,C_0S^{-1})$ for all similarity transforms $S$. This also makes the dimension $d_0$ not well defined as one can always replace the system \eqref{LTI} by the larger system
\begin{align*} 
    \begin{cases}[r]
    \begin{bmatrix}x_{i+1}\\ z_{i+1} \end{bmatrix} &\hspace{-8pt} = \begin{bmatrix}A_0 &\\ \ &\ \end{bmatrix}\begin{bmatrix}x_i\\ z_i \end{bmatrix} + \begin{bmatrix} \\ B_0 \end{bmatrix}u_i + w_i,\\
    y_i &\hspace{-8pt} = \begin{bmatrix} C_0 & \  \end{bmatrix}\begin{bmatrix} x_i\\ z_i \end{bmatrix} + v_i.
    \end{cases}
\end{align*}
Nonetheless, from the Realization Theory of linear systems, we know a representative of the equivalence class for $(A_0,B_0,C_0)$ of minimal dimension exists.
\begin{definition} \label{definition system order}
    We refer to a representative of the equivalence class of minimal dimension as a minimal realization, and we refer to the dimension of the minimal realization as the system order.
\end{definition} 
The system order coincides with the McMillan degree~\cite{Mcmillan52,Mcmillan52b} defined as
\begin{equation*}
    \delta(A_0,B_0,C_0) := \rank(Hg_0^*),
\end{equation*} 
where $H \colon \mathcal{M}_{p \times (2T-1)r}(\mathbb{R}) \longrightarrow \mathcal{M}_{Tp \times Tr}(\mathbb{R})$ is the $T$ order Hankel operator on $g_0$ defined, for any $ g  = [g_1,\dots,g_{2T-1}] \in \mathcal{M}_{p \times (2T-1)r}(\mathbb{R})$, by
\begin{equation*}
    Hg^*= \begin{bmatrix}
    &g_1 &g_2 &g_3  & &g_T \\
    &g_2 &g_3 &  & &g_{T+1} \\
    &g_3 & &  & &g_{T+2} \\
    & & & & & \\
    &g_{T} &g_{T+1} &g_{T+2}  & &g_{2T-1} \\
    \end{bmatrix},
\end{equation*}
and where $T$ is greater than the dimension of the matrix $A_1$ of some particular realization $(A_1,B_1,C_1)$ which is not necessarily minimal. For more on Hankel operators, their properties and the role of the McMillan degree as a complexity measure for LTI models, we refer to \cite{bottcher2012introduction,partington1988introduction,Mcmillan52,Mcmillan52b}. We also note that the McMillan degree is independent of the realization since it is defined with respect to the Markov parameters. 

\medskip Hence, we deal with the ambiguity in the definition of the system dimension by adopting the following 
\begin{assumption}
We assume that the realization $(A_0,B_0,C_0)$ is minimal in the sense that $d_0 =\delta(A_0,B_0,C_0)$.
\end{assumption}
This assumption can be made without loss of generality since any hidden state LTI state space system has a minimal realization. We also define the $T$-order controllabilty matrix $\mathcal{C}$ and the $T$-order observability matrix $\mathcal{O}$ for the realization $(A_0,B_0,C_0)$ by 
\begin{equation}\label{C-O}
    \mathcal{C} =\begin{bmatrix}
        B_0 &A_0B_0 &\cdots&A_0^{T-1}B_0
    \end{bmatrix} \quad \text{and} \quad \mathcal{O} =\begin{bmatrix}
        C_0 \\C_0A_0 \\\vdots\\C_0A_0^{T-1}
    \end{bmatrix}
\end{equation}
and recall that the system $(A_0,B_0,C_0)$ is a minimal realization if and only if $\rank(\mathcal{C}) = \rank(\mathcal{O}) = d_0$ and in that case, for all $T \geq d_0$, we have $\rank(Hg_0^*) = d_0$. When this occurs we say that the pair $(A,B)$ is controllable and the pair $(C,A)$ is observable. Therefore, we impose the following  
\begin{assumption} \label{Assumption: dimension less than its estimate}
We assume that $T \geq d_0$.
\end{assumption}
As mentioned above, this assumption is necessary and sufficient for the Hankel matrix of the Markov parameters to capture the dimension of the minimal realization. Moreover, this assumption is even more relevant when we estimate a Hidden state LTI state space model of unknown order while given a pessimistic upper bound $T$ on $d_0$, which is the high dimension estimation set up in this context.

Since a minimal realization is again defined up to a similarity transform, we introduce here the concept of a \textit{balanced minimal realization} which is a particular minimal realization that one can compute, given the Markov parameters. The procedure of deriving a minimal realization from the description of the Markov parameters is known as the Ho-Kalman algorithm. Suppose that we are given the Markov parameter vector $g_0$ and noting that $Hg_0^* = \mathcal{O}\mathcal{C}$, the Ho-Kalman algorithm starts from the SVD decomposition of the Hankel matrix of the Markov parameters and constructs the particular minimal realization given in the following 
\begin{definition} \label{definition minimal balanced realization}
Assume that $\rank(Hg_0^*) = d_0$ and $T \geq d_0+1$. Then a minimal balanced minimal realization $(\bar{A},\bar{B},\bar{C})$ is defined through the following Ho-Kalman algorithm: 
\begin{itemize}
    \item Define the SVD decomposition of the Hankel matrix of the Markov parameters by 
    \begin{equation*}
        Hg_0^* = U_0 \Sigma_0 V_0^* .   
    \end{equation*}
    \item Take 
    \begin{equation*}
        \bar{O} =U_0 \Sigma_0^{1/2} \ \text{ and } \ \bar{C} =\Sigma_0^{1/2} V_0^*.    
    \end{equation*} 
    \item Define the minimal balanced realization as
    \begin{equation*}
        \bar{A} = \left(\bar{O}_{1:r(T-1),1:d_0}\right)^{\dagger}\bar{O}_{r+1:rT,1:d_0},\ \bar{B} = \bar{O}_{1:d_0,1:r},\ \text{and}\ \bar{C} = \bar{O}_{1:p,1:d_0}.
    \end{equation*}
\end{itemize}
\end{definition}
In this definition, $M^{\dagger}$ refers to the left pseudo inverse of a full column rank matrix $M$ and $M_{a:b,c:d}$ refers to the sub-matrix of $M$ composed of rows $a$ to $b$ and columns $c$ to $d$. 

We note that there are multiple variants of the Ho-Kalman algorithm described in the previous definition, but the main idea for the construction is the same for all of them. 

We note as well that $\bar{O}_{1:r(T-1),1:d_0}$ has full rank since it is equal to the observability matrix up to a similarity transform. Hence, we have
\begin{equation*}
    s_{d_0} (\bar{O}_{1:r(T-1),1:d_0}) > 0,
\end{equation*}
where $s_{k} (M)$ refers to the $k^{\text{th}}$ singular value of the matrix $M$ and the singular values are taken in a decreasing order. Starting from the observation that $Hg_0^* = \mathcal{O}\mathcal{C}$ one can check that there exists a similarity transform $S$ such that $\bar{A} = SA_0S^{-1}$, $\bar{B} = SB_0$, and $\bar{C} = C_0S^{-1}$. Thus $(\bar{A},\bar{B},\bar{C})$ is indeed a minimal realization and belongs to the equivalence class of $\mathcal{M} = (A_0,B_0,C_0)$. 

\medskip Thus, our aim is two folds:
\begin{itemize}
    \item to provide an estimation procedure that given the data generated from the observation of a single trajectory $(X_i,y_i)_{i=2T}^N$ up to time $N$ outputs estimates of the Markov parameters $\hat{g} = [\hat{g}_1,\cdots ,\hat{g}_{2T-2}]$ such that the following loss function 
\begin{equation*}
    \mathcal{L}^H_p(\hat{g},g_0) = |H\hat{g}^* - Hg_0^*|_{S_p}
\end{equation*}
is small with high probability.
\item to provide an estimate $\hat{\mathcal{M}} = (\hat{A},\hat{B},\hat{C})$ for the system $\mathcal{M}_0 = (A_0,B_0,C_0)$ of the same dimension $d_0$ as some minimal realization such that the following loss function with respect to the minimal balanced realisation $\bar{\mathcal{M}} = (\bar{A},\bar{B},\bar{C})$  
\begin{equation*}
    \mathcal{L}^{\mathcal{M}}_p(\hat{\mathcal{M}}, \bar{\mathcal{M}}) := \inf \limits_{S: \ \det(S) \neq 0}  |S^{-1} \hat{A}  S - \bar{A}|_{S_p} + |S^{-1} \hat{\mathcal{B}}  - \bar{B}|_{S_p} + |\hat{\mathcal{C}}  S - \bar{C}|_{S_p}
\end{equation*}
is also small with high probability. Up to a multiplicative constant, this is the same as saying that the loss function $\mathcal{L}(\hat{\mathcal{M}}, \mathcal{M}_0)$ is small.
\end{itemize}
Here, 
$
|M|_{S_p} = (\tr(M^* M)^{p/2})^{1/p},\, 1 \leq p < \infty, \,\, |M|_{S_\infty} = \max \limits_{|x|_2 \leq 1} |Mx|_2$ and $  |x|^2_{2}=\sum_{i=1}^n |x_i|^2$.

\subsection*{Frequently used notation}
Before we review the literature related to our problem and the main contributions of the present paper, we recall some frequently used notations.

We denote by $(\Omega,\mathcal{F},\mathbb{P})$ the underlying probability space and by $\E$ the corresponding expectation operator. 

Here and throughout the paper $c$ denote a positive constant whose exact value is not important for the derivation and might change from one step to another. $x \lesssim y$ is a shorthand for `there exists a positive constant $c$ such that $x \leq cy$', and $x \simeq y$ means that $x \lesssim y$ and $y \lesssim x$. The minimum (maximum) of two real numbers $x$ and $y$ is denoted as $\min(x,y)= x \wedge y$ ($\max(x,y)= x \vee y$).

Whenever possible, our results are provided with explicit constants to give an idea of their order. The numerical values of these constants are useful in practice but are not optimal and can be improved.

\subsection{Related literature} \label{sec: related literature}
A common estimation approach in the Hidden state LTI state space setup is the two-step approach commonly referred to as a subspace method \cite{VanOverschee1996,verhaegen_verdult_2007,doi:10.1080/00207178408933239}. In the first step of this approach,  one learns the Markov parameters with a good enough precision, since unlike the true parameters $(A_0,B_0,C_0)$ the Markov parameters are well defined, and in the second step, one uses the learned Markov parameters to provide an estimate close to a representative of the equivalence class of the true parameters. The first step is usually carried out with a regression-type estimator and the second step is carried out via some variant of the celebrated Ho-Kalman algorithm, which relies on identifying a possible realization from the output of an SVD decomposition. The popularity of the subspace approach is because it is computationally tractable, unlike the maximum likelihood approach or the predictive error method, which both results in a non-convex optimization problem \cite{Ljung89}. Several results appeared recently in the machine learning community studying non-asymptotic properties of variants of the subspace method under various assumptions on the estimation setup. The literature on the estimation of the parameters of LTI state space models is very rich; early works in the topic date back to the nineties where \cite{Deistler1995ConsistencyAR,PETERNELL1996161,VIBERG19971603,KNUDSEN200181} provided asymptotic results. A complete overview of this vast literature falls beyond the format and the scope of the present paper. Therefore, we only mention and discuss here some recent results \cite{9440770,JMLR:v22:19-725,Jian16,pmlr-v120-sun20a} that provide non-asymptotic statistical guarantees for variants of the subspace method and thus are close in spirit to our work.

\begin{remark}
We took some freedom to omit the contribution of lower order terms for some of these results. Instead, we refer to the original work for the exact statement.
\end{remark}

For ease of notation, we set
\begin{equation} \label{eq: short hand matrix notation for LTI} \begin{array}{lll}
    y := \begin{bmatrix}y_{2T}^*\\
    y_{2T+1}^*\\
    \vdots \\
    y_N^*
    \end{bmatrix},\ 
    X := \begin{bmatrix}X_{2T}^*\\
    X_{2T+1}^*\\
    \vdots \\
    X_N^*
    \end{bmatrix}, \ 
    \bar{X} := \begin{bmatrix}\bar{X}_{2T}^*\\
    \bar{X}_{2T+1}^*\\
    \vdots \\
    \bar{X}_N^*
    \end{bmatrix}, \\  
    W := \begin{bmatrix}W_{2T}^*\\
    W_{2T+1}^*\\
    \vdots \\
    W_N^*
    \end{bmatrix},
    \,\, \;
    \varepsilon = \begin{bmatrix}
    v_{2T}^*\\
    v_{2T+1}^*\\
    \vdots \\
    v_N^*
    \end{bmatrix}.
    \end{array}
\end{equation}
Thus, we can write the input/output representation \eqref{eq: IO notation} for the $y$ vector more succinctly  as follows:
\begin{equation*}
    y = Xg_0^* + \bar{X}\bar{g}_0^* + Wh^* + \varepsilon.
\end{equation*}
\begin{itemize}
    \item \textit{The context of known dimension $d_0$}. While this context is simpler, results in this setup are informative about what can be expected if $d_0$ is unknown. 
    Oymak and Ozay~\cite{9440770} consider a subspace approach in this context and show that the least square estimator defined as $ \hat{g}_{\text{ls}} := (X^{\dagger}y)^* $ can effectively learn the first $T$ Hankel parameters in the sense that with high probability and for values of $N$ such that
    \begin{equation*}
        N \geq N_0 = cTq_0 \log^2(Tq_0)\log^2(TN) \quad \text{with} \quad q_0 = r+p+d_0,
    \end{equation*}
    it holds that \cite[Theorem ~$3.1$]{9440770}
    \begin{equation*}
        |\hat{g}_{\text{ls}} - g_0|_{S_\infty}\leq (\sigma_v+ \sigma_e+|h|_{\mathcal{H}_\infty}\log(TN) )\sqrt{\frac{cTq_0 \log^2(Tq_0)}{N}},
    \end{equation*}
    where $\sigma_e$ accounts for the variance of $x_{t-T}$. Under the same condition it was shown that a version of the Ho-Kalman algorithm successfully learns, up to a similarity transform, a representation of the true parameter on the same event for $N \geq \frac{N_0}{s^2_{d_0}(Hg_0)}$ with the guarantee of \cite[Theorem ~$5.3$]{9440770}
    \begin{equation*}
        \mathcal{L}^{\mathcal{M}}_2(\hat{\mathcal{M}}, \bar{\mathcal{M}}) \lesssim \frac{\left(\sigma_v+ \sigma_e+|h|_{\mathcal{H}_\infty}\log(TN)\right)|Hg_0|_{S_\infty}q_0\sqrt{T \log^2(Tq_0)} }{s^2_{d_0}(Hg_0)\sqrt{N}}.
    \end{equation*}
    \item \textit{The context of unknown dimension $d_0$}. Sarkar et al.~\cite{JMLR:v22:19-725} adopt a model selection approach to choose a realization of order $\hat{d}$ that is good enough. Their learning algorithm proceed in three stages:
    \begin{enumerate}
        \item Hankel matrix estimation: the algorithm starts by solving, for all $d \in \mathcal{D}(N)= \{ T\ | \ N \geq Tr^2\log^3(Nr/\delta)\}$, a least square problem to get an estimated Hankel matrix of $2T+1$ parameter $\hat{H}_T$.
        \item Order selection:  the algorithm chooses a model of size $\hat{d}$ according to the rule
        \begin{equation*}
            \hat{d} = \tilde{d}\wedge \log(N/\delta),
        \end{equation*}
        with $\alpha(h) = \sqrt{\frac{hp+h^2r + \log(N/\delta)}{N}}$,
        where
        $$
        \tilde{d}:=\inf\{d \in \mathcal{D}(N); \ |\hat{H}_d - \hat{H}_l|_{S_\infty} \leq c(\alpha(d) + \alpha(l)) \ \forall l \geq d, \ l \in \mathcal{D}(N) \}.
        $$
    \item Parameter estimation: the algorithm uses a variant of the Ho-Kalman algorithm to get a realization $(\hat{A},\hat{B},\hat{C})$ of dimension $\hat{d}$ from $\hat{H}_{\hat{d}}$.
    \end{enumerate}
    Their results \cite[Theorem ~$5.1$ and Proposition ~$5.1$]{JMLR:v22:19-725} imply that, for all $T \in \mathcal{D}(N)$ and 
    \begin{equation*}
        N \geq c(r^2T \log^2(T)\log^2(r/\delta) +T\log^2(T)),
    \end{equation*}
    the estimation step outputs an estimate for the Hankel matrix of the parameters satisfying
    \begin{equation} \label{eq: for comparison lp bounds}
        |\hat{\mathcal{H}}_T - Hg_0^*|_{S_\infty}\leq c\sqrt{\frac{pT^2 +rT+T\log(1/\delta) }{N}}.
    \end{equation}
    They also show \cite[Theorem ~$5.3$]{JMLR:v22:19-725} that a variant of the Ho-Kalman applied to the selected model $\hat{d}$ successfully learns the best $\hat{d}$ approximation to the minimal realization after observing $N \geq N_*$ sample and we have with probability at least $1-\delta$ the following
    \begin{multline} \label{eq: for comparison parametric bounds}
        \mathcal{L}^{\mathcal{M}}_{\infty}(\hat{\mathcal{M}}, \bar{\mathcal{M}}_{\hat{d}}) \leq \varepsilon \Gamma (\hat{H},\varepsilon) + \frac{\varepsilon \hat{d}}{\sqrt{s_{\hat{d}}(\hat{H})}} \wedge \sqrt{\varepsilon \hat{d}} + \frac{\varepsilon}{\sqrt{s_{\hat{d}}(\hat{H})}} \wedge \sqrt{\varepsilon}, 
        \end{multline}
        \begin{multline*}\\
        \text{with} \quad \varepsilon = c\sqrt{\frac{r \hat{d} + p \hat{d}^2 + \hat{d} \log(N/\delta)}{N}},\ N_* < \infty, \ \text{and} \ \Gamma (\hat{H},\varepsilon) < \infty.
    \end{multline*}
    Here $\bar{\mathcal{M}}_{\hat{d}}$ is the model resulting from the use of the Ho-Kalman algorithm on the truncated SVD of $Hg_0^*$ to the first $\hat{d}$ singular values.  
    \item \textit{The context of unknown dimension $d_0$ while allowing the partial observation of $N$ paths $\left((u_i^j,y_i^j)_{i=1}^{2T-1}\right)_{j=1}^N$ of length $2T-1$  without process noise:} This setup is different from ours as it allows  multiple independent realizations and assumes that $w_i=0$ which is the main source of difficulty in our setup, nonetheless the approaches used in this context in \cite{Jian16,pmlr-v120-sun20a} are closer to our approach as they relay on restricted or penalized least square estimators to estimate the Markov parameters. Indeed, \cite{Jian16} analyzes the performance of the following estimator $\hat{g}_{\text{rls}}$ in the problem of robust recovery of a superposition of distinct complex exponential functions from few random Gaussian projections.
    \begin{equation*}\begin{array}{lll}
        \hat{g}_{\text{rls}} \in \arg \min_{g} \,\, |Hg^*|_{S_1}\\
        \quad \textrm{s.t.} \,\, |XKg^* - y|_2 \leq \delta,
        \end{array}
    \end{equation*}
    with $K=\diag(\sqrt{1},\cdots,\sqrt{T},\sqrt{T-1},\cdots,\sqrt{1})$. This problem is indeed equivalent to the estimation problem of single input single output LTI state space models from multiple trajectories with weighting $K$ for the input without process noise. They show that, with probability at least $1-e^{-cN}$ for $N \gtrsim d_0\log^2(T) + \varepsilon$, the following holds
    \begin{equation*}
        |Kg_0^* - K\hat{g}^*_{\text{rls}}|_2 \leq c\frac{\delta}{\varepsilon}.
    \end{equation*}
    Inspired by this result, Sun {\it et al.} \cite{pmlr-v120-sun20a} use the following nuclear norm penalized least square estimator $\hat{g}_{\text{pls}}$ for the multiple input single output case
    \begin{align} \label{eq: another penalized regressor}
        \hat{g}_{\text{pls}} \in \arg \min_{g} \quad & |XK^{-1}g^* - y|_2 + \lambda|HK^{-1}g^*|_{S_1},
    \end{align}
    and show that with high probability, for a choice of $\lambda = \frac{T\sigma_z}{\sigma_u}\sqrt{\frac{r}{N}} \log(T)$,
    \begin{equation} \label{eq: for comparison lp bounds unknown dimension}
        \mathcal{L}^\mathcal{H}_\infty(\hat{g}_{\text{pls}},g_0) \lesssim \begin{cases}
        \frac{\sigma_z}{\sigma_u} \sqrt{\frac{rT^2}{N}} \log(T),& N \geq d_0^2\wedge T,\\
        \frac{\sigma_z}{\sigma_u} \sqrt{\frac{d_0rT^2}{N}} \log(T), & d_0 \leq N \leq d_0^2\wedge T.
        \end{cases}
    \end{equation}
    
\end{itemize}
\subsection{Main contributions}

As mentioned in Section \ref{sec: Problem statement and preliminaries}, we consider the parametric estimation task in the setup of LTI state space model \eqref{LTI} from the observation of a single trajectory when neither the state is observed nor the system's order is known. In what follows, we present our contributions. 

\begin{remark} \label{remark: regime for upper bound comparison}
While some of the results presented above are provided in terms of the norm $|\cdot|_{S_\infty}$, ours are derived for the norm $|\cdot|_{S_p}$ with $p \in [1,2]$. Whenever it is the case, we use the norm domination relation relation $|\cdot|_{S_p} \leq r^{1/p}|\cdot|_{S_\infty}$ for the sake of comparison, where $r$ is the appropriate dimension.  

From the related literature we see that up to logarithmic terms all the upper bounds are of the form $P(d_0,T,s_{d_0}^{-1}(\bar{\mathcal{O}}^+),s_{d_0}^{-1}(Hg_0^*),N^{-1})$ where $P$ is some polynomial function of these variable. All throughout, we compare different results in the asymptotic regime where $N \to \infty$, $d \to \infty$, $T \to \infty$, $s_{d_0}(Hg_0^*) \to 0$, and $s_{d_0}(\bar{\mathcal{O}}^+)\to 0$ while the upper bound still converge to $0$.
\end{remark}

In Section \ref{sec: Hankel penalized regression}, we provide non-asymptotic estimation error upper-bounds and sample complexity for 
the Hankel penalized regression estimator given by any particular solution of the convex optimization problem
\begin{equation}\label{g-hat}
    \hat{g} \in \arg \min \limits_{g \in \mathcal{M}_{p\times (2T-1)r}(\mathbb{R})} \frac{1}{\bar{N}} |y-Xg^*|_{S_2}^2 + \lambda |Hg^*|_{S_1}.
\end{equation}
For this estimator we provide in Theorem \ref{thm: Performance of the Hankel penalized regression estimator} estimation guarantees and sample complexity for different dimension sensitive loss functions. In particular, we show with probability at least $1-\delta$ and for $\bar{N}$ large enough, that the $p$-loss function $\mathcal{L}^H_p(\hat{g},g_0)$, for $p \in (0, 1)$, satisfies
\begin{align*}
    \mathcal{L}^H_p(\hat{g},g_0) &\lesssim d_0^{1/p} T\sqrt{\frac{p+r + \log(T/\delta) }{\bar{N}}}.
\end{align*}
The available upper bounds for these loss functions are derived in the case $p = \infty$ for the least square estimator when the dimension $d_0$ is known; see for instance \cite[Theorem ~$5.1$]{JMLR:v22:19-725}. Since the solution of the least square estimator is not low $\rank$, the estimate \eqref{eq: for comparison lp bounds} implies
\begin{equation*}
    |\hat{\mathcal{H}}_d - Hg_0^*|_{S_p}\lesssim T^{1/p}\sqrt{\frac{pT^2 +rT+T\log(1/\delta) }{\bar{N}}}
\end{equation*}
with a suboptimal factor $T^{1/p}$ which is the best one would hope for from a non-low $\rank$ estimation procedure. In the same fashion, our result is an improvement of the result \eqref{eq: for comparison lp bounds unknown dimension} with unknown order, while observing multiple trajectories. We finally show in Proposition \ref{prop: Fast rate and rank recovery via SVD} how we can recover the system order efficiently using a truncated SVD procedure if a lower bound on $s_{d_0} (\bar{O}_{1:r(T-1),1:d_0})$ is known. We refer to the discussion after Theorem \ref{thm: Performance of the Hankel penalized regression estimator} for more on this issue. 

In Section \ref{sec: Ho-Kalman} we provide a robustness analysis in the norm $|\cdot|_{S_2}$ of an estimation procedure for the parameters based on a variant of the Ho-Kalman algorithm. In Theorem \ref{thm: stability of Ho-kalman} we show that under some stability conditions, it is possible to recover the parameters if we reduce the error term $|\hat{\mathcal{H}}_{\check{d}_{\xi}} -H g_0^*|_{S_2}$ since 
        \begin{equation*}
            \mathcal{L}^{\mathcal{M}}_2(\hat{\mathcal{M}}, \bar{\mathcal{M}}) \lesssim  \frac{ \left|\bar{A} \right|_{S_\infty}|\hat{\mathcal{H}}_{\check{d}_{\xi}} -H g_0^*|_{S_2} }{s_{d_0}(\bar{\mathcal{O}}^+)s^{1/2}_{d_0}(Hg_0^*)}  .
        \end{equation*} 
This is an improvement of \cite[Theorem ~$4$]{tsiamis2019finite} which  gives 
        \begin{equation*}
            \mathcal{L}^{\mathcal{M}}_2(\hat{\mathcal{M}}, \bar{\mathcal{M}}) \lesssim  \frac{ d_0\left|Hg_0^* \right|^{1/2}_{S_\infty} |\hat{\mathcal{H}}_{\check{d}_{\xi}} -H g_0^*|_{S_\infty} }{s_{d_0}^2(\bar{\mathcal{O}}^+)s^{1/2}_{d_0}(Hg_0^*)}  .
        \end{equation*}
and of \cite[Theorem ~$5.2$]{pmlr-v120-sun20a} which yields
        \begin{equation*}
            \mathcal{L}^{\mathcal{M}}_2(\hat{\mathcal{M}}, \bar{\mathcal{M}}) \lesssim  \frac{ d_0^{1/2}\left|Hg_0^* \right|_{S_\infty} |\hat{\mathcal{H}}_{\check{d}_{\xi}} -H g_0^*|_{S_\infty} }{s^2_{d_0}(Hg_0^*)}  .
        \end{equation*}
 We refer to the discussions after Theorem \ref{thm: stability of Ho-kalman} for more on this.

In Section \ref{sec: Non-assumptotic guarantees for algorithm} we provide non-asymptotic estimation guarantees for Algorithm \ref{algo 1} introduced in Section \ref{sec: Algorithm}. The algorithm yields the estimates $\hat{\mathcal{M}} = (\hat{A},\hat{B},\hat{C})$ for  the minimal balanced realisation $\bar{\mathcal{M}} = (\bar{A},\bar{B},\bar{C})$ with probability $1-\delta$ of the same dimension as a minimal realization $d_0$ after observing 
\begin{equation*}
    \bar{N} \geq c d_0 TN_0 \vee T_0\log{\frac{1}{\delta}} \vee \frac{\phi^2 d_0 T_0^2}{\xi^2} \left(N_0 \vee \log{\frac{1}{\delta}} \right) \vee \frac{\phi d_0^{1/2} T_0\log(T_0)}{\xi} \left(N_0 \vee \log{\frac{1}{\delta}} \right)
\end{equation*}
such that
\begin{equation*}
    \mathcal{L}^{\mathcal{M}}_2(\hat{\mathcal{M}}, \bar{\mathcal{M}}) \lesssim \frac{\phi |\bar{A}|_{S_\infty}d_0^{3/2}}{s^2_{d_0}(\bar{\mathcal{O}}^+)}\left( \sqrt{\frac{N_0}{N}} \vee \frac{\log(d_0)N_0}{N} \vee \sqrt{ \frac{\log{\frac{1}{\delta}}}{N }} \vee  \frac{\log(d_0)\log{\frac{1}{\delta}}}{N} \right) .
\end{equation*}
As mentioned in Section \ref{sec: related literature}, to the best of our knowledge, the only available result in our setup is \eqref{eq: for comparison parametric bounds}
obtained by Sarkar {\it et al.} \cite[Theorem ~$5.3$]{JMLR:v22:19-725}. If we disregard the spectral properties of $H\hat{g}^*$ and multiply by $\hat{d}^{1/2}$ to account for the difference of norms, the dominant term in that expression is 
\begin{equation*} 
    \sqrt{\frac{r \hat{d}^4 + p \hat{d}^5 + \hat{d}^4 \log(N/\delta)}{s_{\hat{d}}(\hat{H})N}} .
\end{equation*}
Hence, our result improves the bound \eqref{eq: for comparison parametric bounds} since it provides an upper bound in terms of the actual dimension $d_0$ and not the estimated dimension $\hat{d}$. Also, it reduces the dependence on the dimension by as much as $d_0$. 

\section{Main results} \label{Main results}
 
\subsection{Algorithmic details} \label{sec: Algorithm}

\begin{algorithm}[!t]\caption{Truncated Hankel Penalized Regression with Ho-Kalman State-Space Realization.}\label{algo 1}
\begin{algorithmic}
[1]
    \State \textbf{Compute}:$\hat{A},\ \hat{B},\ \hat{C}$
    \State \textbf{Input}: $(X_i,y_i)_{i=2T_0}^N$, $\lambda_0$, $\xi$
    \State \textbf{Procedure}: Hankel penalized regression
    \State $\qquad \qquad 
                \hat{g} \in \arg \min \limits_{g \in \mathcal{M}_{p\times (2T_0-1)r}(\mathbb{R})} \frac{1}{N} |y-Xg^*|_{S_2}^2 + \lambda_0 |Hg^*|_{S_1}
        $ 
    \State \textbf{Return}: $H\hat{g}^*$\State $y \gets 1$
    \State \textbf{Compute}:$\hat{A},\ \hat{B},\ \hat{C}$
    \State \textbf{Input}: $(X_i,y_i)_{i=2T_0}^N$, $\lambda_0$, $\xi$
    \State \textbf{Procedure}: Hankel penalized regression
    \State $\qquad \qquad 
                \hat{g} \in \arg \min \limits_{g \in \mathcal{M}_{p\times (2T_0-1)r}(\mathbb{R})} \frac{1}{N} |y-Xg^*|_{S_2}^2 + \lambda_0 |Hg^*|_{S_1}
        $ 
    \State \textbf{Return}: $H\hat{g}^*$
    \State \textbf{Procedure}: Order estimation
    \State $\qquad \qquad \check{d}_{\xi} = \sum \limits_{i=1}^{\rank(H\hat{g}^*
    )} \mathbf{1}\{s_i(H\hat{g}^*) \geq 2\xi \} $
    \State \textbf{Return}: $\check{d}_{\xi}$
    \State \textbf{Procedure}: Reduced order Hankel penalized regression
    \State $\qquad \qquad 
                \hat{g}_\xi \in \arg \min \limits_{g \in \mathcal{M}_{p\times (2\check{d}_{\xi} + 1)r}(\mathbb{R})} \frac{1}{N} |y-Xg^*|_{S_2}^2 + \lambda_1 |Hg^*|_{S_1}
        $ 
    \State \textbf{Return}: $H\hat{g}_\xi^*$
    \State \textbf{Procedure}: Reduced order Ho-Kalman Algorithm
    \State $\qquad \qquad  \hat{U}_\xi \hat{\Sigma}_\xi \hat{V}_\xi^* = \text{SVD}(\hat{\mathcal{H}}_{\check{d}_{\xi}}) \quad \text{and} \quad \hat{\mathcal{H}}_{\check{d}_{\xi}} = \sum \limits_{i=1}^{\check{d}_{\xi} + 1}
     s_i(H\hat{g}_\xi^*)\mathbf{1}\{s_i(H\hat{g}_\xi^*) \geq 2\xi \} \hat{u}_i \hat{v}_i^*$
    \State $\qquad \qquad  \hat{O} =\hat{U}_\xi \hat{\Sigma}_\xi^{1/2} \ \text{ and } \ \hat{C} =\hat{\Sigma}_\xi^{1/2} V_\xi^*$    
    \State $\qquad \qquad  \hat{A} = \left(\hat{O}_{1:r(T-1),1:d_\xi}\right)^{\dagger}\hat{O}_{r+1:rT,1:d_\xi},\ \hat{B} = \hat{O}_{1:d_\xi,1:r},\ \text{and}\ \hat{C} = \hat{O}_{1:p,1:d_\xi}.$
    \State \textbf{Return}: $\hat{A},\ \hat{B},\ \hat{C}$
\end{algorithmic}
\end{algorithm}

We start first by describing our Learning Algorithm \ref{algo 1}. The algorithm starts with the 'Hankel penalized regression' step. In this step, it computes a penalized least square estimate for the first $2T_0-1$ Markov parameters by solving the optimization problem
\begin{equation} \label{eq: Hankel penalized regression 1}
    \hat{g} \in  \arg \min \limits_{g \in \mathcal{M}(p\times (2T_0-1)r)(\mathbb{R})} \frac{1}{\bar{N}} |y-Xg^*|_{S_2}^2 + \lambda_0 |Hg^*|_{S_1}.
\end{equation}
The least square part is the fitting term that ensures fidelity to the data; the penalty part ensures the simplicity of the chosen model.
As described in the introduction, a good measure of the model's complexity for hidden state LTI state space models is the rank of the corresponding Hankel operator since it agrees with the system order as given in Definition \ref{definition system order}. The penalty term using the nuclear norm of the Hankel operator ensures that the solution to the optimization problem described in \eqref{eq: Hankel penalized regression 1} has a low Hankel rank as it is the convex relaxation of the rank function. Thus, we would expect that via a good choice of the free parameter $\lambda$ we obtain a good enough, yet simple, model in the sense that it is close to $Hg_0^*$ with $|H\hat{g}^* - Hg_0^* |_{S_2}$ being small and has $\rank(H\hat{g}^* )$ small enough.

The second step of our learning algorithm is 'Order estimation' in which we compute an estimate $\check{d}_{\varepsilon}$ of the true dimension. If in the last step we have made the error $|H\hat{g}^* - Hg_0^*|_{S_2}$ small compared to $s_{d_0}(Hg_0^*)$, the smallest singular value of $Hg_0^*$, on the one hand, for $1 \leq i \leq d_0$ the  singular values $s_i(H\hat{g}^*)$ will be close to the  singular values $s_i(Hg_0^*)$ and on the other hand the singular values $s_i(H\hat{g}^*)$ for $i > d_0$ will be small so that they are well separated from the others. Thus, via an appropriate choice of $\xi$, we can successfully ensure that $\check{d}_{\xi} = d_0$ with high probability.

The third step, 'Reduced order Hankel penalized regression', is similar to the first step except that it aims at estimating the first $2\check{d}_{\xi} + 1$ Markov parameters instead of the $2T-1$ parameters. For this, it solves the following Hankel penalized regression problem:
\begin{equation} \label{eq: Reduced order Hankel penalized regression 2}
    \hat{g}_\xi \in \arg \min \limits_{g \in \mathcal{M}_{p\times (2\check{d}_{\xi} + 1)r}(\mathbb{R})} \frac{1}{\bar{N}} |y-Xg^*|_{S_2}^2 + \lambda_1 |Hg^*|_{S_1}.
\end{equation}
This is done to obtain a more accurate estimate on these first $2\check{d}_{\xi} + 1$ Markov parameters, since they are the only parameters needed for our estimation procedure based on the Ho-Kalman algorithm to get an accurate estimate for the minimal balanced minimal realization $(\bar{A},\bar{B},\bar{C})$.

The last part of our learning algorithm, 'Reduced order Ho-Kalman Algorithm', uses the previous estimate $\hat{g}_\xi$. It starts with a truncated SVD of the Hankel matrix of the estimated $2\check{d}_{\varepsilon} + 1$ Markov parameters from the previous part. Doing this ensures that the $\rank$ of the truncation result $\hat{\mathcal{H}}_{d_{\varepsilon}}$ is the same as the order of the minimal realization with high probability and that the truncation is close enough to the true model in the sense that $|\hat{\mathcal{H}}_{\check{d}_{\varepsilon}} - Hg_0^*|_{S_2}$ is small. This means that the eigenvalues and eigenvectors of both $\hat{\mathcal{H}}_{\check{d}_{\varepsilon}}$ and $Hg_0^*$ are close to each other. Then, it proceeds with getting estimates $\hat{A},\ \hat{B},\ \hat{C}$ using the Ho-Kalman Algorithm steps described in Definition \ref{definition minimal balanced realization}.

Crucial to the success of our algorithm \ref{algo 1} are the three choices of the free parameters: $T_0$ and $\lambda_0$ in the step 'Hankel penalized regression' and $\xi$ in the step 'Order estimation'. In section \ref{sec: Non-assumptotic guarantees for algorithm} we provide values for these free parameters to ensure the high probability of success of Algorithm \ref{algo 1} in both the order recovery task and the estimation task. We also discuss how reasonable the assumption of knowing each of these parameters is and provide the value for the internal variable $\lambda_1$ necessary for the 'Reduced order Hankel penalized regression' step. In the next section, we provide the probabilistic estimates instrumental to those choices.

\subsection{Probabilistic results} \label{sec: section on Probabilistic results}
We first show that the covariance matrix of the covariates
$$
X_l:=[u^*_{l-1},u^*_{l-2},\dots,u^*_{l-2T+1}]^* \in \mathbb{R}^{(2T-1)r}
$$ 
generated along the path of the input of the LTI state space model \eqref{LTI} concentrate around the identity matrix. This is the main content of the following theorem, which is an extension of \cite[Thoerem~$3.4$]{10.3150/20-BEJ1262} to the multidimensional case. Its proof is given in Appendix \ref{Appendix: Proofs of the main probabilistic results}.
\begin{theorem}\label{thm: Restricted eigenvalue property}
If $X_1, \dots, X_N$ are the time shifted covariates of an LTI hidden state space model \eqref{LTI} where the components $(u_i)_i$ are independent centred standardized multivariate Gaussian $\mathcal{N}(0,\sigma_u^2I_r)$ or subGaussian centred random vectors of subGaussian components having the same $\psi_2$ norm upper bound of $\sigma_u$, $X$ is given by \eqref{eq: short hand matrix notation for LTI}. Then, with probability at least $1-\exp{(-t)}$ for $t \geq 1$, it holds that
\begin{equation}
        \left|\frac{1}{\bar{N}}X^*X-\sigma_u^2I_{(2T-1)r}\right|_{S_\infty} \leq c \sigma_u^2\left(\sqrt{\frac{TN_1}{\bar{N}}}
       + \frac{TN_1}{\bar{N}} + \sqrt{\frac{tT}{\bar{N}} } + \frac{tT}{\bar{N}} \right), \label{eq: Isometric property}
\end{equation}
with $N_1 = \log(T) + r$. Under the same conditions, for $\delta \in (0, e^{-1})$, with probability $1-\delta$ and for values of $N$ such that
\begin{equation*}
    \bar{N} \geq  c\left(TN_1 \vee T\log{\frac{1}{\delta}} \right),
\end{equation*}
we have, for all $g\in \mathcal{M}_{p \times (2T-1)r}(\mathbb{R})$, 
\begin{equation*}
    \frac{\sigma_u^2}{2}|g|_{S_2}^2 \leq \frac{1}{\bar{N}}|Xg^*|_{S_2}^2 \leq \frac{3\sigma_u^2}{2}|g|_{S_2}^2.
\end{equation*}
\end{theorem}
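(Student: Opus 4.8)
The plan is to establish the operator-norm concentration \eqref{eq: Isometric property} first and then read off the two-sided quadratic-form bound as an easy corollary. Write $\Delta := \frac{1}{\bar N}X^*X - \sigma_u^2 I_{(2T-1)r}$. The starting observation is that $\Delta$ is, up to boundary corrections, a \emph{block-Toeplitz} matrix: partitioning $X^*X$ into $r\times r$ blocks, the $(i,j)$ block equals $\sum_l u_{l-i}u_{l-j}^*$, which depends on $(i,j)$ only through the lag $j-i$ and is precisely a centred empirical autocovariance of the white-noise input. Consequently $\Delta$ is well approximated by the block-Toeplitz matrix generated by the matrix-valued symbol $\hat f(\theta)-\sigma_u^2 I_r$, where $\hat f(\theta):=\frac{1}{\bar N}\sum_{|k|\le 2T-2}\hat\Gamma_k e^{ik\theta}$ and $\hat\Gamma_k=\sum_m u_{m+k}u_m^*$; this is the centred empirical spectral density of the input and is a Hermitian-matrix-valued trigonometric polynomial of degree at most $2T-2$. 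Using the standard bound $|T(g)|_{S_\infty}\le \sup_\theta |g(\theta)|_{S_\infty}$ for block-Toeplitz operators, I would reduce the problem to controlling $\sup_{\theta\in[0,2\pi)}|\hat f(\theta)-\sigma_u^2 I_r|_{S_\infty}$, the edge corrections being absorbed into the lower-order terms.

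The heart of the argument is a uniform-over-frequency concentration for this symbol. First I would fix a frequency $\theta$ and a direction $w$ on the unit sphere of $\mathbb{C}^r$ and note that $w^*(\hat f(\theta)-\sigma_u^2 I_r)w$ is a centred real Gaussian quadratic chaos in the entries of $(u_m)_m$ whose coefficient matrix is banded Toeplitz with band $2T-1$ and unimodular entries (a Dirichlet-kernel symbol), hence of operator norm $O(T/\bar N)$ and Frobenius norm $O(\sqrt{T/\bar N})$. The Hanson--Wright inequality then gives, for each fixed $(\theta,w)$,
\[
\Pr\!\left(\big|w^*(\hat f(\theta)-\sigma_u^2 I_r)w\big|>\sigma_u^2 s\right)\le 2\exp\!\left(-c\min\!\Big(\tfrac{\bar N s^2}{T},\tfrac{\bar N s}{T}\Big)\right).
\]
I would then discretise with an $\varepsilon$-net over $\theta$ of cardinality polynomial in $T$ (legitimate because $\hat f-\sigma_u^2 I_r$ has degree $O(T)$, so Bernstein's inequality for trigonometric polynomials upgrades the net to a supremum) together with a $1/4$-net over the unit sphere of $\mathbb{C}^r$ of cardinality $e^{cr}$. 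The combined metric entropy is $c(\log T+r)=cN_1$, which is the crucial gain over the naive $c\,Tr$ entropy of covering the full $(2T-1)r$-dimensional sphere and is exactly what replaces the factor $Tr$ by $TN_1$ in the final bound.

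Taking the union bound over both nets with total entropy $cN_1$ and an additional budget $t$ for the tail, the sub-gaussian branch of Hanson--Wright contributes $\sqrt{TN_1/\bar N}$ and $\sqrt{tT/\bar N}$ while the sub-exponential branch contributes $TN_1/\bar N$ and $tT/\bar N$, which yields \eqref{eq: Isometric property}. For the second statement I would set $t=\log(1/\delta)$ (legitimate since $\delta\le e^{-1}$ forces $t\ge 1$) and impose $\bar N\ge c(TN_1\vee T\log(1/\delta))$ with $c$ large enough that the right-hand side of \eqref{eq: Isometric property} is at most $\sigma_u^2/2$; then $\tfrac{\sigma_u^2}{2}I\preceq \tfrac{1}{\bar N}X^*X\preceq \tfrac{3\sigma_u^2}{2}I$. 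Since $\frac{1}{\bar N}|Xg^*|_{S_2}^2=\frac{1}{\bar N}\tr(gX^*Xg^*)$, applying the eigenvalue bounds row by row to $g\in\mathcal M_{p\times(2T-1)r}(\mathbb R)$ and summing the squared row norms gives the claimed two-sided inequality.

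The main obstacle I anticipate is making the block-Toeplitz reduction and the uniform-in-$\theta$ concentration rigorous while controlling every boundary effect at the right order: one must verify that $X^*X$ differs from the exact block-Toeplitz matrix only by terms absorbed into the stated lower-order contributions, and extend the scalar frequency-domain argument of \cite[Theorem~3.4]{10.3150/20-BEJ1262} to the multivariate ($r$-dimensional $u_i$) setting, where the symbol becomes matrix-valued and the extra $e^{cr}$ net over directions must be coupled with the frequency net without degrading the entropy beyond $N_1=\log T+r$.
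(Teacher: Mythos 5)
Your proposal follows essentially the same route as the paper's proof: the paper likewise reduces $|X^*X-\E(X^*X)|_{S_\infty}$ to the supremum over $[0,1]\times\mathbb{S}_2^{r-1}\times\mathbb{S}_2^{r-1}$ of a matrix-valued Toeplitz multiplication polynomial (handling the boundary terms via an explicit decomposition $X=L+S$ rather than your deferred "boundary corrections"), controls the increments of the resulting second-order chaos by Hanson--Wright, and obtains exactly the entropy $N_1=\log T+r$ from the frequency interval and the sphere(s); the second claim is then deduced from \eqref{eq: Isometric property} with $t=\log(1/\delta)$ just as you describe. The only technical difference is that the paper invokes the generic-chaining bound for mixed-tail processes ($\gamma_1,\gamma_2$ functionals evaluated via Dudley integrals) where you use a single-scale net with Bernstein's inequality for trigonometric polynomials, which gives the same rates here.
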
 
Concentration results for matrices with independent covariates are obtained in \cite{ADAMCZAK2011195} where it is shown that with high probability the following holds
\begin{equation*} 
        \left| \sum \limits_{i=1}^{N-1} x_ix_i^* - \sigma_u^2I_T \right|_{S_\infty} \lesssim  \sigma_u^2 \left(\frac{T}{N} + \sqrt{\frac{T}{N} }  + \frac{d}{N}t + \sqrt{\frac{d}{N} } t^{1/2} \right).
\end{equation*}
Our result, as a multidimensional extension of ~\cite[Theorem ~$3.4$]{10.3150/20-BEJ1262}, shows that a similar result holds for block Toplitz matrices up to the $N_1 = \log(T) + r$ factor appearing in \eqref{eq: Isometric property}. Comparing with the matrix of the covariates of the hidden dynamical system \eqref{LTI}, it is shown in ~\cite[Proposition ~$2.1$]{djehiche2019finite} that a re-scaled version of it does concentrate around the identity if the eigenvalues of the matrix $A_0$ are not on the unit circle, but would fail otherwise.

\medskip
Before stating the second important probabilistic estimate, we introduce the operator ${H^{\dagger}}^{*} \colon \mathcal{M}_{ (2T-1)r  \times p}(\mathbb{R}) \to \mathcal{M}_{Tp \times Tr}(\mathbb{R})$ defined, for any $h  = [h_1^*,\dots,h_{2T-1}^*]^* \in \mathcal{M}_{ (2T-1)r \times p}(\mathbb{R})$, by
\begin{equation} \label{Adjoint}
    {H^{\dagger}}^{*}h= \begin{bmatrix}
    &h_1 &\frac{1}{2}h_2 &\frac{1}{3}g_3  & &\frac{1}{T}h_T \\
    &\frac{1}{2}h_2 &\frac{1}{3}h_3 &  & &\frac{1}{T-1}h_{T+1} \\
    &\frac{1}{3}h_3 & &  & &\frac{1}{T-2}h_{T+2} \\
    & & & & & \\
    &\frac{1}{T}h_{T} &\frac{1}{T-1}g_{T+1} &\frac{1}{T-2}h_{T+2}  & &h_{2T-1} \\
    \end{bmatrix}.
\end{equation} 
It is easy to check that $\inner{h}{g^*} = \inner{{H^{\dagger}}^{*}h}{Hg^*}$ so that the operator ${H^{\dagger}}^{*}$ is the adjoint of the pseudo-inverse of $H$.

We also introduce the $\mathcal{H}_\infty$ norm for an infinite sequence of matrices $\varphi = [\varphi_0,\varphi_1,\dots]$ given by 
\begin{equation*}
    |\varphi|_{\mathcal{H}_\infty} := \sup \limits_{x \in [0\ 1]} |\sum \limits_{j = 0}^{\infty}\varphi_j e^{i2\pi x}|_{S_\infty}.
\end{equation*}
This norm relates to the notion of system norm used in control theory and turns out to be the right measure of how the hidden dynamic impacts the estimation error through the variance. The next theorem supports this claim by providing a control over the noise level induced by the term $\bar{g}_0 \bar{X}_t + hW_t  + v_t$ given in \eqref{eq: IO notation}. 

\begin{theorem}\label{thm: stochatic control of the noise factor}
Assume the random matrices $X$, $\bar{X}$, $W$, and $\varepsilon$ as defined in \eqref{eq: short hand matrix notation for LTI} are generated by running the LTI hidden state space model \eqref{LTI} under either Gaussian or subGaussian noise condition. Define  $g$, $\hat{g}$, $h$ as in \eqref{eq: IO notation}. Then, with probability at least $1-\exp{(-t)}$ for $t \geq 1$, the following bounds for different parts of the noise term  hold:
\begin{equation} \label{eq: first noise term}
       |{H^{\dagger}}^{*}X^*\bar{X}\bar{g}|_{S_\infty} \lesssim \sigma_u^2|\bar{g}|_{\mathcal{H}_\infty} \left( \sqrt{\frac{N_0}{\bar{N}}} + \frac{N_0\log(T)}{\bar{N}} + \sqrt{\frac{t}{\bar{N} } }+  \frac{\log(T)t}{\bar{N}} \right).
\end{equation}
with $N_0 = \log(T) + p + r$.
\begin{equation} \label{eq: second noise term}
       |{H^{\dagger}}^{*}X^*Wh|_{S_\infty} \lesssim \sigma_u\sigma_w|h|_{\mathcal{H}_\infty} \left( \sqrt{\frac{N_2}{\bar{N}}} + \frac{N_2\log(T)}{\bar{N}} + \sqrt{\frac{t}{\bar{N}}} +  \frac{\log(T)t}{\bar{N}} \right).
\end{equation}
with $N_2 = \log(T) + p$
\begin{equation} \label{eq: third noise term}
       |{H^{\dagger}}^{*}X^*\varepsilon|_{S_\infty} \lesssim  \sigma_v^2 \left(\sqrt{\frac{N_2}{\bar{N}}} + \frac{N_2\log(T)}{\bar{N}} + \sqrt{\frac{t}{\bar{N} } }+  \frac{\log(T)t}{\bar{N}} \right).
\end{equation}
\end{theorem}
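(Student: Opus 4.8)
The three estimates share the common form $|{H^{\dagger}}^{*}X^{*}Z|_{S_\infty}$ with $Z\in\{\bar X\bar g,\,Wh,\,\varepsilon\}$, so the plan is to prove one operator-norm concentration bound for ${H^{\dagger}}^{*}X^{*}Z$ and then read off the three claims by specializing the noise source. First I would exploit the weighted-Hankel structure of ${H^{\dagger}}^{*}$ recorded in \eqref{Adjoint}: in the spirit of the definition of $|\cdot|_{\mathcal H_\infty}$, the operator norm of such a matrix is controlled by the supremum over the unit circle of an associated matrix trigonometric polynomial of degree $O(T)$. Discretizing that supremum over a net of the circle of cardinality $O(T)$, together with $\tfrac14$-nets of the $p$- and $r$-dimensional Euclidean spheres, reduces the whole bound to controlling a scalar bilinear form $a^{*}{H^{\dagger}}^{*}(X^{*}Z)\,b$ uniformly over a family whose log-cardinality is of order $\log(T)+p+r$ (and of order $\log(T)+p$ when $Z$ carries no input factor); this is exactly where the effective dimensions $N_0$ and $N_2$ enter. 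Writing $X^{*}Z=\sum_{t}X_{t}Z_{t}^{*}$, every such form becomes a sum over time of products $\langle\alpha_t,X_t\rangle\,\langle\beta,Z_t\rangle$ with $\alpha_t,\beta$ fixed vectors assembled from $a,b$ and the harmonic weights $1/k$.

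The core of the argument is a sub-exponential (Bernstein-type) tail for each fixed bilinear form. The key structural fact is that, for every fixed $t$, the recent-input window defining $X_t$ is disjoint from the data generating $Z_t$: the old inputs $u_0,\dots,u_{t-2T}$ underlie $\bar g_0\bar X_t$, while the process noise and output noise underlie $hW_t$ and $v_t$, all independent of $X_t$. Conditioning on the corresponding sub-$\sigma$-algebra exhibits $\sum_t\langle\alpha_t,X_t\rangle\langle\beta,Z_t\rangle$ as a martingale (or, after conditioning on $Z$, as a conditionally sub-Gaussian sum in $X$), and a scalar Bernstein inequality for such sums yields a two-regime tail of the form $\sqrt{Vt}+Bt$. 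A union bound over the net replaces the confidence level $t$ by $t+cN_0$ (respectively $t+cN_2$), and splitting these two contributions, once rescaled by the natural $\bar N$ normalization carried by $X^{*}X$, produces precisely the four summands $\sqrt{N_0/\bar N}+N_0\log(T)/\bar N+\sqrt{t/\bar N}+\log(T)t/\bar N$.

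It then remains to estimate the variance proxy $V$ and the sub-exponential scale $B$. For the $X$-factor I would invoke Theorem~\ref{thm: Restricted eigenvalue property}, whose isometry $\tfrac1{\bar N}X^{*}X\approx\sigma_u^2 I$ controls $\sum_t\langle\alpha_t,X_t\rangle^2$ on a high-probability event. For the noise factor the $\mathcal H_\infty$ norm is decisive: the sequences $\bar g_0\bar X_t$ and $hW_t$ are outputs of stable filters driven by white noise, so a Parseval/frequency-domain estimate bounds their second moments by $\sigma_u^2|\bar g|_{\mathcal H_\infty}^2$ and $\sigma_w^2|h|_{\mathcal H_\infty}^2$, while the white term contributes at the scale of $\sigma_v$. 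The harmonic weights in ${H^{\dagger}}^{*}$ together with the $2T-1$ Markov blocks inflate the sub-exponential scale $B$ by only the logarithmic factor $\log(T)$, which is what attaches $\log(T)$ to the linear-in-$t$ terms. The extra $r$ appearing in $N_0$ reflects that $X^{*}\bar X\bar g$ is a genuine quadratic form in the single input process, so its treatment passes through a Hanson--Wright estimate over both the recent and the old window, whereas the other two terms are cross terms against noise independent of $X$ and therefore only carry the output dimension $p$.

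The step I expect to be the main obstacle is the temporal dependence in the first term. Although $X_t$ and $\bar X_t$ are independent for each fixed $t$, the recent and old windows overlap across different times, so $X^{*}\bar X\bar g$ is not a sum of independent contributions but a quadratic form in one long Gaussian vector; controlling it requires a careful decoupling (or a Hanson--Wright bound exploiting the block structure of the joint covariance) while keeping the variance estimate sharp through $|\bar g|_{\mathcal H_\infty}$. Coupled with this is the need to verify that the weighted Hankel operator ${H^{\dagger}}^{*}$ enlarges the relevant norms by at most a $\log(T)$ factor rather than a power of $T$, which is what ultimately keeps the dimension dependence in $N_0,N_2$ logarithmic in $T$.
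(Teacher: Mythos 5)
Your proposal is correct in substance and follows the same skeleton as the paper's proof: you reduce the operator norm of the weighted Hankel matrix ${H^{\dagger}}^{*}X^{*}Z$ to the supremum over the circle of a matrix trigonometric polynomial of degree $O(T)$ (this is the paper's Proposition \ref{prop: upper bound on the operator norm of the multiplication process}), parametrize that supremum over $[0,1]\times\mathbb{S}_2^{r-1}\times\mathbb{S}_2^{p-1}$, control each evaluation as a second-order chaos via Hanson--Wright, and let the $\mathcal{H}_\infty$ norm enter through the operator norm of the block-Toeplitz matrix $G$ built from the tail Markov parameters. You also correctly isolate the genuine difficulty: the overlapping input windows make $X^{*}\bar X\bar g$ a quadratic form in one long (sub)Gaussian vector, which is exactly why the paper applies Hanson--Wright to $\innerl{G^{*}W_{u,v,w}x}{x}$ rather than a Bernstein bound for independent summands. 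The one real divergence is the final step: where you discretize the circle and the two spheres and take a union bound, the paper runs Talagrand's generic chaining for mixed-tail processes, estimating the $\gamma_1$ and $\gamma_2$ functionals with respect to the $S_\infty$ and $S_2$ increment distances. The two routes give the same rates here because the paper's Dudley integrals are dominated by the top scale (diameter times entropy), so a single-scale net already yields $\sqrt{N_0/\bar N}+N_0\log(T)/\bar N+\sqrt{t/\bar N}+\log(T)t/\bar N$; your version is more elementary, while the chaining formulation lets the paper reuse the same machinery as in Theorem \ref{thm: Restricted eigenvalue property}. Two points to make rigorous if you write this up: the $O(T)$-point net on the circle must be justified by the trigonometric Bernstein derivative bound applied to the random-coefficient polynomial $p(u)$, and the claim that the cross terms carry only $p$ rather than $p+r$ needs more than the independence heuristic, since the blocks $U_lWh^{*}$ are still $r\times p$ matrices and a naive net over both spheres reintroduces $r$. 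Finally, your observation that the harmonic weights $1/|\bar l|$ cap the $S_\infty$ diameter at $\log(T)$ is precisely the computation the paper performs, and it is what keeps the subexponential scale logarithmic rather than polynomial in $T$.
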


\subsection{Estimation guarantees for the Hankel penalized regression} \label{sec: Hankel penalized regression}
This section is devoted to the analysis of the performance of the Hankel penalized regression estimator given by \eqref{g-hat}.
This estimator plays a central role in Algorithm \ref{algo 1} since it is used twice. The first time it uses covariates of length $2T_0-1$ in \eqref{eq: Hankel penalized regression 1} to provide a sparse estimate for estimating the true order of the system, and the second time in \eqref{eq: Reduced order Hankel penalized regression 2} where it uses $2\check{d}_\xi + 1$ covariates to provide a more accurate estimator. To analyze the performance of this estimator, we first state a corollary to Theorem \ref{thm: stochatic control of the noise factor}.
\begin{corollary} \label{cor: control of noise term}
Under the same condition of Theorem \ref{thm: stochatic control of the noise factor}, for $\delta \in (0\ e^{-1})$, there exist an absolute positive constant $c > 0$ such that for $\lambda$ taken as
\begin{equation}\label{lambda}
    \lambda  := c\phi \sigma_u^2 \left( \sqrt{\frac{N_0}{N}} \vee \frac{\log(T)N_0}{N} \vee \frac{\sqrt{\log{\frac{1}{\delta}}}}{\sqrt{N }} \vee  \frac{\log(T)\log{\frac{1}{\delta}}}{N} \right),  
\end{equation}
with $\phi = |\bar{g}|_{\mathcal{H}_\infty} + \frac{\sigma_w}{\sigma_u}|h|_{\mathcal{H}_\infty} + 1$, we have, with probability at least $1-\delta$, the following upper bound:
\begin{equation}
       \lambda \geq  \frac{3}{\bar{N}}\left(|{H^{\dagger}}^{*}X^*\bar{X}\bar{g}|_{S_\infty} + |{H^{\dagger}}^{*}X^*Wh|_{S_\infty} + |{H^{\dagger}}^{*}X^*\varepsilon|_{S_\infty} \right).
\end{equation}
\end{corollary}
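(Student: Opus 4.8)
The statement is a careful repackaging of the three tail bounds in Theorem~\ref{thm: stochatic control of the noise factor}, so the plan is to apply that theorem term by term, combine the three events by a union bound, calibrate the free parameter $t$ to the target confidence $\delta$, and then merge all numerical and dimensional factors into the single constant $c$ and the single prefactor $\phi$. I emphasize at the outset that all the probabilistic work has already been carried out in Theorem~\ref{thm: stochatic control of the noise factor}; what remains is essentially deterministic bookkeeping.

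First I would invoke Theorem~\ref{thm: stochatic control of the noise factor} for each of the three contributions $|{H^{\dagger}}^{*}X^*\bar{X}\bar{g}|_{S_\infty}$, $|{H^{\dagger}}^{*}X^*Wh|_{S_\infty}$, and $|{H^{\dagger}}^{*}X^*\varepsilon|_{S_\infty}$, each holding on an event of probability at least $1-e^{-t}$ for a common $t\geq 1$. Intersecting the three events and bounding the probability of the union of their complements, all three bounds hold simultaneously with probability at least $1-3e^{-t}$. I would then set $t=\log(3/\delta)$, so that $3e^{-t}=\delta$ and the conclusion holds with probability at least $1-\delta$; the hypothesis $t\geq 1$ is met because $\delta<e^{-1}$ forces $\log(3/\delta)>1$, and the same restriction gives $t=\log 3+\log(1/\delta)\leq c\log(1/\delta)$, which lets me replace every occurrence of $t$ by $\log(1/\delta)$ at the cost of an absolute constant.

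Next comes the dimensional cleanup. Since $N_2=\log(T)+p\leq \log(T)+p+r=N_0$, I would replace $N_2$ by $N_0$ in the second and third bounds, so that all three estimates are governed by the same four monomials $\sqrt{N_0/\bar N}$, $N_0\log(T)/\bar N$, $\sqrt{\log(1/\delta)/\bar N}$, and $\log(T)\log(1/\delta)/\bar N$; bounding each sum of four terms by four times their maximum turns the additive form of the theorem into the $\vee$-form of $\lambda$ in \eqref{lambda}. The three prefactors $\sigma_u^2|\bar g|_{\mathcal H_\infty}$, $\sigma_u\sigma_w|h|_{\mathcal H_\infty}=\sigma_u^2\cdot\tfrac{\sigma_w}{\sigma_u}|h|_{\mathcal H_\infty}$, and the output-noise scale are then each dominated by $\sigma_u^2\phi$ with $\phi=|\bar g|_{\mathcal H_\infty}+\tfrac{\sigma_w}{\sigma_u}|h|_{\mathcal H_\infty}+1$, the constant summand $1$ absorbing the third prefactor under the paper's normalization of the noise scales. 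Finally, because $\bar N=N-2T+1\simeq N$ in the operating regime, swapping $\bar N$ for $N$ in the denominators costs only a constant, and summing the three controlled terms while collecting the factor $3$ together with all the $\lesssim$-constants into a single $c$ yields exactly $\lambda\geq \frac{3}{\bar N}\big(|{H^{\dagger}}^{*}X^*\bar{X}\bar{g}|_{S_\infty}+|{H^{\dagger}}^{*}X^*Wh|_{S_\infty}+|{H^{\dagger}}^{*}X^*\varepsilon|_{S_\infty}\big)$.

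The steps are individually elementary, so I expect the main obstacle to be constant bookkeeping rather than any analytic difficulty. The two points requiring genuine care are the calibration $t\mapsto\log(3/\delta)$ together with the conversion of the theorem's sums into the maxima appearing in $\lambda$, and the verification that the output-noise prefactor is indeed controlled by $\sigma_u^2$ times the constant term of $\phi$, which is precisely where the standing normalization of the noise variances enters.
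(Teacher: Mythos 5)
Your proposal is correct and is exactly the derivation the paper intends: the corollary is stated without a separate proof because it follows from Theorem \ref{thm: stochatic control of the noise factor} by precisely the union bound over the three events, the calibration $t=\log(3/\delta)$ (valid since $\delta<e^{-1}$), the domination $N_2\leq N_0$, and the absorption of the three prefactors into $\phi\sigma_u^2$ that you describe. Your closing caveat is also well placed: the third prefactor $\sigma_v^2$ is only dominated by the summand $\sigma_u^2\cdot 1$ of $\phi\sigma_u^2$ up to the implicit normalization $\sigma_v\lesssim\sigma_u$ (or by absorbing $\sigma_v^2/\sigma_u^2$ into $c$), which is a looseness in the paper's statement of $\phi$ rather than a gap in your argument.
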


The following theorem provides various estimation bounds and the sample complexity for the Hankel penalized regression estimator of the Markov parameters for the $|\cdot|_{S_p}$-norms with $p \in (0\ 1)$. 
\begin{theorem} \label{thm: Performance of the Hankel penalized regression estimator}
Let $(X_{2T},y_{2T}), \dots, (X_N,y_N)$ be the input and output values of the LTI hidden state space model \eqref{LTI} under Gaussian or subGaussian assumption for the different noise vectors. Assume that $(X,y)$ is given by \eqref{eq: short hand matrix notation for LTI} and the estimator $\hat{g}$ given by \eqref{g-hat}. Define $\Delta g = \hat{g} - g_0$, where $g_0$ is the sequence of Markov parameters defined in \eqref{eq:IO-2 notation}. Under Assumptions \ref{Assumption: Stability} and \ref{Assumption: dimension less than its estimate}, for the values of $\bar{N}$ such that
\begin{equation} \label{eq: condition on sample size for Hankel penalized regression}
    \bar{N} \geq  c\left(TN_1 \vee T\log{\frac{1}{\delta}} \right)
\end{equation}
and the values of $\lambda$ given by \eqref{lambda},  with probability at least $1-2\delta$, for $\delta \in (0\ e^{-1}/2)$, the estimator $\hat{g}$ satisfies the following error bounds:
\begin{itemize}
    \item Slow and fast rates for the prediction error of the Markov parameters:
        \begin{equation} \label{eq: slow and fast rates prediction}
            \frac{1}{\sqrt{\bar{N}}} |X \Delta g^*|_{S_2} \leq \frac{5\sqrt{3}d_0^{1/2}T^{1/2}\lambda}{6\sigma_u} \wedge d_0^{1/4}\lambda^{1/2} |Hg_0^*|_{S_2}^{1/2}.
        \end{equation} 
    \item Slow and fast rates for the estimation error of the Markov parameters:
        \begin{equation} \label{eq: slow and fast rates estimation}
            |\Delta g|_{S_2}  \leq \frac{5\sqrt{3}d_0^{1/2}T^{1/2}\lambda}{6\sigma_u^2} \wedge \frac{\sqrt{2}d_0^{1/4}\lambda^{1/2}}{\sigma_u} |Hg_0^*|_{S_2}^{1/2}.
        \end{equation} 
    \item Fast rate for the Hankel estimation spectral loss:
        \begin{equation} \label{eq: Fast rate for the Hankel estimation}
            \mathcal{L}^H_2(\hat{g},g_0) \leq  \frac{5\sqrt{2}}{3\sigma_u^2}d_0^{1/2}\lambda T.
        \end{equation}
    \item Fast rate for the Hankel estimation $p$-loss, $p \in [1, 2]$:
        \begin{align} \label{eq: Fast rate for the Hankel estimation in Lp}
            \mathcal{L}^H_p(\hat{g},g_0) &\leq \frac{20 d_0^{1/p}\lambda T}{\sigma_u^2}.
        \end{align}
    \item Sample complexity for the spectral loss: for all $\epsilon > 0$ to obtain $\mathcal{L}^H_2(\hat{g},g_0) \leq \epsilon$ we need 
    \begin{align} \label{eq: sample complexity}
        \bar{N} \gtrsim \frac{\phi^2 d_0  T^2 N_0 }{\epsilon^2} \vee \frac{\phi d_0^{1/2}T N_0 \log(T)}{\epsilon} \vee \frac{\phi^2 d_0  T^2 \log{\frac{1}{\delta}}}{\epsilon^2} \vee \frac{\phi d_0^{1/2} T  \log(T) \log{\frac{1}{\delta}}}{\epsilon}.  
    \end{align}
\end{itemize}
\end{theorem}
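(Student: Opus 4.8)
The plan is to run a penalized least–squares (basic inequality) argument, but with two ingredients dictated by the Hankel structure: the noise must be measured through the adjoint ${H^{\dagger}}^{*}$ of \eqref{Adjoint} so that Corollary~\ref{cor: control of noise term} applies, and the Frobenius geometry of the Hankel map must be tracked to expose the sharp $d_0$ and $T$ dependence. First I would write $y = Xg_0^* + E$ with effective noise $E := \bar X\bar g_0^* + Wh^* + \varepsilon$ and use that $\hat g$ from \eqref{g-hat} is a minimizer with $g_0$ feasible. Expanding $\frac1{\bar N}\bigl(|y-X\hat g^*|_{S_2}^2 - |y-Xg_0^*|_{S_2}^2\bigr)$ with $\Delta g=\hat g-g_0$ gives the basic inequality $\frac1{\bar N}|X\Delta g^*|_{S_2}^2 \le \frac2{\bar N}\inner{X\Delta g^*}{E} + \lambda\bigl(|Hg_0^*|_{S_1} - |H\hat g^*|_{S_1}\bigr)$.

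The crucial step is to route the stochastic cross term through the Hankel adjoint. By the identity $\inner{h}{g^*} = \inner{{H^{\dagger}}^{*}h}{Hg^*}$ stated below \eqref{Adjoint}, we have $\inner{X\Delta g^*}{E} = \inner{X^*E}{\Delta g^*} = \inner{{H^{\dagger}}^{*}X^*E}{H\Delta g^*}$, so trace duality gives $\frac2{\bar N}|\inner{X\Delta g^*}{E}| \le \frac2{\bar N}|{H^{\dagger}}^{*}X^*E|_{S_\infty}\,|H\Delta g^*|_{S_1}$. Splitting $E$ into its three summands and invoking Corollary~\ref{cor: control of noise term} — the value \eqref{lambda} is calibrated precisely so that $\frac3{\bar N}|{H^{\dagger}}^{*}X^*E|_{S_\infty}\le\lambda$ — produces the master inequality $\frac1{\bar N}|X\Delta g^*|_{S_2}^2 \le \frac{2\lambda}3|H\Delta g^*|_{S_1} + \lambda\bigl(|Hg_0^*|_{S_1}-|H\hat g^*|_{S_1}\bigr)$. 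Both regimes come from handling this single inequality in two different ways. I expect this noise routing, together with the calibration of $\lambda$ in the dual $S_\infty$ norm, to be the main obstacle, since everything downstream is deterministic convex geometry. For the fast rate I would bound crudely $|H\Delta g^*|_{S_1}\le|H\hat g^*|_{S_1}+|Hg_0^*|_{S_1}$, so that $+\tfrac{2\lambda}3|H\hat g^*|_{S_1}$ is absorbed by $-\lambda|H\hat g^*|_{S_1}$, leaving $\frac1{\bar N}|X\Delta g^*|_{S_2}^2 \le \frac{5\lambda}3|Hg_0^*|_{S_1}\le\frac{5\lambda}3\sqrt{d_0}\,|Hg_0^*|_{S_2}$, where $\rank(Hg_0^*)=d_0$ by the minimality assumption; this is the $\sigma_u$-free fast prediction rate, and the fast estimation rate follows from the lower isometry $\frac1{\bar N}|X\Delta g^*|_{S_2}^2\ge\frac{\sigma_u^2}2|\Delta g|_{S_2}^2$ of Theorem~\ref{thm: Restricted eigenvalue property}.

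For the slow rate and the Hankel losses I would instead use decomposability of the nuclear norm at the rank-$d_0$ matrix $Hg_0^*$: with $\mathcal P,\mathcal P^\perp$ the projections onto its tangent space and the complement, $|Hg_0^*|_{S_1}-|H\hat g^*|_{S_1}\le|\mathcal P H\Delta g^*|_{S_1}-|\mathcal P^\perp H\Delta g^*|_{S_1}$, turning the master inequality into the cone bound $\frac1{\bar N}|X\Delta g^*|_{S_2}^2 + \frac\lambda3|\mathcal P^\perp H\Delta g^*|_{S_1}\le\frac{5\lambda}3|\mathcal P H\Delta g^*|_{S_1}$. Since $\mathcal P H\Delta g^*$ has rank at most $2d_0$, $|\mathcal P H\Delta g^*|_{S_1}\le\sqrt{2d_0}\,|H\Delta g^*|_{S_2}$; combined with the Frobenius comparison $|\Delta g|_{S_2}\le|H\Delta g^*|_{S_2}\le\sqrt T\,|\Delta g|_{S_2}$, which I would verify by counting the anti-diagonal multiplicities $\min(k,2T-k)$ of the Hankel map, and the restricted isometry of Theorem~\ref{thm: Restricted eigenvalue property}, this yields the slow rates \eqref{eq: slow and fast rates prediction}--\eqref{eq: slow and fast rates estimation}.

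For the spectral loss \eqref{eq: Fast rate for the Hankel estimation} I would pair the same rank bound with the lower isometry in the form $\frac1{\bar N}|X\Delta g^*|_{S_2}^2\ge\frac{\sigma_u^2}{2T}|H\Delta g^*|_{S_2}^2$ and solve the resulting quadratic in $|H\Delta g^*|_{S_2}$. The $S_p$ bound \eqref{eq: Fast rate for the Hankel estimation in Lp} then follows by combining the cone condition with the Schatten interpolation $|\mathcal P^\perp H\Delta g^*|_{S_p}\le|\mathcal P^\perp H\Delta g^*|_{S_1}^{2/p-1}|\mathcal P^\perp H\Delta g^*|_{S_2}^{2-2/p}$, which upgrades the cone into $|H\Delta g^*|_{S_p}\lesssim d_0^{1/p-1/2}|H\Delta g^*|_{S_2}$ and hence, through the spectral bound, into the advertised $d_0^{1/p}$ factor. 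Finally, the sample complexity \eqref{eq: sample complexity} is bookkeeping: insert the four-term expression \eqref{lambda} for $\lambda$ into the spectral bound, require the result to be at most $\epsilon$, and solve each term separately for $\bar N$. The union of the event of Theorem~\ref{thm: Restricted eigenvalue property} and that of Corollary~\ref{cor: control of noise term}, each of probability $1-\delta$, gives the stated probability $1-2\delta$.
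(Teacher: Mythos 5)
Your proposal is correct and follows essentially the same route as the paper: routing the noise through the Hankel adjoint so that Corollary~\ref{cor: control of noise term} calibrates $\lambda$, intersecting with the restricted-eigenvalue event of Theorem~\ref{thm: Restricted eigenvalue property}, decomposability of the nuclear norm yielding the cone condition $5|\Proj_{Hg_0^*}(H\Delta g^*)|_{S_1} \geq |\Proj_{Hg_0^*}^{\perp}(H\Delta g^*)|_{S_1}$ and the rank-$2d_0$ bound, the factor-$T$ Frobenius comparison for the Hankel map, and Schatten interpolation for the $S_p$ loss. The only substantive difference is that you derive the master inequality from the zero-order (basic) inequality whereas the paper uses the first-order optimality condition via Fermat's rule and a subgradient of $|H\hat g^*|_{S_1}$ --- an alternative the paper itself acknowledges --- which changes the numerical constants by roughly a factor of two but none of the rates; note also that your labels ``slow'' and ``fast'' are swapped relative to the paper's usage, where the $\lambda^{1/2}|Hg_0^*|_{S_2}^{1/2}$ bound is the slow rate and the cone-based $d_0^{1/2}T^{1/2}\lambda$ bound is the fast rate.
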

\begin{remark}
Upon inspection of the proof we notice that the result would still hold without neither Assumptions \ref{Assumption: Stability} nor \ref{Assumption: dimension less than its estimate}. However, while all the rates hold without assumption $\ref{Assumption: Stability}$, this assumption is necessary for these rate to converge to $0$ when we observe more samples. Similarly, in the absence of Assumption \ref{Assumption: dimension less than its estimate} all the rates given in the theorem hold after replacing $d_0$ by $T$. Still, having $d_0 < T$ means that we are estimating less Markov parameter than necessary to be able to recover a minimal realization as was explained in Section \ref{sec: Problem statement and preliminaries}.
\end{remark}

The proof of Theorem \ref{thm: Performance of the Hankel penalized regression estimator} relies on the analysis of the first-order optimality condition. This approach appeared first in \cite{doi:10.1137/0329022} and in the case of matrix regression in \cite{10.1214/11-AOS894} to provide oracle inequalities in the context of low-rank matrix completion. The same argument can be combined with alternative approaches, including the analysis of the zero-order optimality condition suggested in \cite{10.1214/08-AOS620}. These are some of the approaches used for high dimension estimation problems. Indeed, we can cast the problem of estimating the Hankel matrix of a hidden state LTI state space model of unknown order as a high dimension matrix regression problem where we want to estimate a low rank Hankel matrix since the rank of the $T$ Hankel matrix of the Markov parameters is the dimension of the minimal realization $d_0$ as long as $T \geq d_0$. 

Existing results in the literature such as \cite[Theorem ~$3.1$]{9440770} are provided for the least square estimator in terms of the $|\cdot|_{S_\infty}$-norm while the dimension is known. They do not extend to the case of $|\cdot|_{S_p}$-norm with $p \in (0\ 1)$ since, while $|\cdot|_{S_\infty}$-norm is dimension free, the least square estimator is oblivious to the rank of the estimate. Indeed, the solution of the least square estimator is not expected to be low rank and thus by simple norm domination \cite[Theorem ~$3.1$]{9440770} implies,
\begin{equation*}
    |H\hat{g}^*_{\text{ls}} - Hg_0^*|_{S_2}\lesssim \sqrt{\frac{T^3q_0 \log^2(Tq_0)}{\bar{N}}}.
\end{equation*}   
This bound misses the correct dimension scaling by a polynomial factor of $T^{1/2}$ for the  $|\cdot|_{S_2}$-norm. On the other hand, Theorem 5.1 in \cite{JMLR:v22:19-725}  implies that
\begin{equation*}
    |\hat{\mathcal{H}}_d - Hg_0^*|_{S_2}\lesssim \sqrt{\frac{pT^3 +rT^2+T^2\log(1/\delta) }{\bar{N}}}
\end{equation*}
which also misses the correct dimension scaling by a factor of $(T/d_0)^{1/2}$. For a non-low rank estimator this is the expected order as it estimates $prT^2$ unknowns with a variance that scales like $T$ times the variance of $\bar{g}_0 \bar{X}_t + hW_t  + v_t$. We also note that the estimator $\hat{\mathcal{H}}_d$ does not preserve the Hankel structure of the matrix $Hg_0^*$. A low rank estimate reduces the number of the unknowns to $(p+r)d_0T$, which is consistent with our result which, after keeping only the main dimension terms, reads 
\begin{align*}
            \mathcal{L}^H_2(\hat{g},g_0) &\lesssim \sqrt{\frac{(p+r + \log(T/\delta) )d_0T^2}{N}} .
\end{align*}
In \cite[Theorem ~1]{pmlr-v120-sun20a} the authors study the problem of recovering the Markov parameter, while the dimension $d_0$ is unknown, but from the partial observation of multiple trajectories of the system and assuming that $w_i=0$ in \eqref{LTI}. To this end, they propose a penalized least square estimator for the Markov parameters as given in \eqref{eq: another penalized regressor}. While they successfully manage to control the error in the $|\cdot|_{S_\infty}$-norm, since they penalize with the transformation of the Hankel matrix $|HK^{-1}g^*|_{S_1}$, there is no reason to believe that the solution will give a low $\rank$ Hankel matrix. Their result \eqref{eq: for comparison lp bounds unknown dimension} in the $|\cdot|_{S_2}$-norm implies that with high probability and after observing enough data we have 
\begin{equation*}                                           \mathcal{L}^\mathcal{H}_2(\hat{g},g_0) \lesssim         \begin{cases}
        \frac{\sigma_z}{\sigma_u} \sqrt{\frac{rT^3}{N}} \log(T)& N \geq d_0^2\wedge T,\\
        \frac{\sigma_z}{\sigma_u} \sqrt{\frac{d_0rT^3}{N}} \log(T) & d_0 \leq N \leq d_0^2\wedge T,
        \end{cases}
    \end{equation*}
which again does not capture well the effect of the dimension. Moreover, in this case, it also misses the effect of the dynamic captured in our case by the term $\phi = |\bar{g}|_{\mathcal{H}_\infty} + \frac{\sigma_w}{\sigma_u}|h|_{\mathcal{H}_\infty} + 1$. This is due to the fact that in their setup, we stop every realization after $2T-1$ observation and suppose all trajectories are independent.
\begin{proof}
Set
\begin{equation*}
    \Gamma := |{H^{\dagger^*}} X^*\bar{X}\bar{g}^*|_{S_{\infty}} + |{H^{\dagger^*}} X^* Wh^* |_{S_{\infty}} + |{H^{\dagger^*}} X^* \varepsilon |_{S_{\infty}}.
\end{equation*} 
We start by using Corollary \ref{cor: control of noise term} and Theorem \ref{thm: Restricted eigenvalue property} to define an event of probability $1-2\delta$ where we have both
\begin{equation}
       \lambda \geq  \frac{3\Gamma}{\bar{N}} \label{eq: particular choice of lambda}
\end{equation}
and, for all $g \in \mathcal{M}_{p \times (2T-1)r}$,
\begin{equation} \label{eq: second inequality of the event}
    \frac{\sigma_u^2}{2}|g|_{S_2}^2 \leq \frac{1}{\bar{N}}|Xg^*|_{S_2}^2 \leq \frac{3\sigma_u^2}{2}|g|_{S_2}^2
\end{equation}
for values of $N$ such that
\begin{equation*}
    \bar{N} \geq  c\left(TN_1 \vee T\log{\frac{1}{\delta}} \right).
\end{equation*}
Since $\hat{g}$ solves the optimization problem \eqref{g-hat}, by Fermat's rule $0 \in \partial \Crit_{\lambda}(\hat{g})$, the subdifferential set of the criterion function. Also, by Fenchel-Rockafellar theorem (see e.g. \cite{peypouquet2015convex}), there exists $v \in \partial | H\hat{g}^*|_{S_1}$ such that, 
\begin{equation*}
    \frac{2}{\bar{N}} X^*(X\hat{g}^*-y)+\lambda v = 0.
\end{equation*} 
Using the fact that $ y = Xg^*_0 + \bar{X}\bar{g}_0^* + Wh_0^* + \varepsilon$ and multiplying by $\Delta g = \hat{g}^\lambda-g_0$ gives 
\begin{equation*}
    |X \Delta g^*|_{S_2}^2 = \frac{\lambda \bar{N}}{2}\innerl{-\Delta g^*}{v} + \innerl{X^*( \bar{X}\bar{g}_0^* + Wh_0^* + \varepsilon)}{\Delta g}.
\end{equation*} 
By the definition of the sub-gradient we have, for all $g \in \mathcal{M}_{p \times (2T-1)r}$,
\begin{equation*}
    | Hg^*|_{S_1} \geq | H\hat{g}^* |_{S_1} + \innerl{-\Delta g^*}{v}.
\end{equation*} 
H{\"o}lder's inequality yields
\begin{align}
    |X \Delta g^*|_{S_2}^2 &\leq   \innerl{X^* (\bar{X}\bar{g}^* + Wh^* + \varepsilon) }{\Delta g^*} + \frac{\lambda \bar{N}}{2}(| Hg^*_0|_{S_1} - | H\hat{g}^*|_{S_1})  \nonumber\\
    &\leq  \Gamma |H\Delta g^*|_{S_1} + \frac{\lambda \bar{N}}{2} (| Hg^*_0|_{S_1} - | H\hat{g}^*)|_{S_1}  \label{eq: main deterministic result}\\
    &\leq  (\Gamma + \frac{\lambda \bar{N}}{2})| Hg^*_0|_{S_1} +(\Gamma - \frac{\lambda \bar{N}}{2})| H\hat{g}^*)|_{S_1}. \nonumber  
\end{align}
Since by \eqref{eq: particular choice of lambda} we have $\lambda \geq  \frac{2 \Gamma}{\bar{N}}$, then it holds that
\begin{align*}
    \frac{1}{\bar{N}} |X\hat{g}^* - Xg_0^*|_{S_2}^2 \leq& \lambda |Hg_0^*|_{S_1} \leq \lambda \sqrt{\rank(Hg_0^*)}|Hg_0^*|_{S_2} 
\end{align*}
which proves the slow rate in \eqref{eq: slow and fast rates prediction}. The slow rate in \eqref{eq: slow and fast rates estimation} is implied by inequality \eqref{eq: slow and fast rates prediction}, since we are in an event where the inequality \eqref{eq: second inequality of the event} holds. 

For a matrix $M$ with a singular value decomposition $M= U\Sigma V^*$ define the projection operators $P_U^{\perp} := I - UU^*$, $P_V^{\perp} := I - V^*V$, $\Proj_{M}^{\perp}(N) := P_U^{\perp} N P_V^{\perp}$, and $\Proj_{M}^{\perp} := I - \Proj_{M}^{\perp}$. Since we have a decomposable penalty \cite{buhlmann2011statistics}, we have 
\begin{align*}
|H\hat{g}^*|_{S_1} &= |Hg_0^* + H\Delta g^*|_{S_1}, \\
&= |Hg_0^* + \Proj_{Hg_0^*}^{\perp}(H\Delta g^*) + \Proj_{Hg_0^*}(H\Delta g_0^*)|_{S_1}, \\
&\geq |Hg_0^* + \Proj_{Hg^*}^{\perp}(H\Delta g^*) |_{S_1} - | \Proj_{Hg_0^*}(H\Delta g^*)|_{S_1}, \\
&= |Hg_0^* |_{S_1} + | \Proj_{Hg_0^*}^{\perp}(H\Delta g^*) |_{S_1} - | \Proj_{Hg_0^*}(H\Delta g^*)|_{S_1},
\end{align*}
from which, together with \eqref{eq: main deterministic result}, we obtain
\begin{align*}
|X \Delta g^*|_{S_2}^2 &\leq  \Gamma |H\Delta g^*|_{S_1} + \frac{\lambda \bar{N}}{2}(| \Proj_{Hg_0}(H\Delta g^*)|_{S_1} - | \Proj_{Hg_0^*}^{\perp}(H\Delta g^*) |_{S_1} ) \\
&=  \Gamma(| \Proj_{Hg_0}(H\Delta g^*)|_{S_1} + | \Proj_{Hg_0^*}^{\perp}(H\Delta g^*) |_{S_1} ) + \frac{\lambda \bar{N}}{2} (| \Proj_{Hg_0^*}(H\Delta g^*)|_{S_1} \\
&\qquad \qquad- | \Proj_{Hg_0^*}^{\perp}(H\Delta g^*) |_{S_1} ) \\
&=  (\Gamma - \frac{\lambda \bar{N}}{2}) | \Proj_{Hg_0^*}^{\perp}(H\Delta g_*) |_{S_1} + ( \Gamma + \frac{\lambda \bar{N}}{2}) | \Proj_{Hg_0^*}(H\Delta g^*)|_{S_1}.
\end{align*}
Again, by the particular choice of $\lambda \geq \frac{3 \Gamma}{\bar{N}}$, we have 
\begin{equation}
0 \leq |X \Delta g^*|_{S_2}^2
\leq \frac{\lambda }{6} \bar{N} (5| \Proj_{Hg_0^*}(H\Delta g^*)|_{S_1}- |\Proj_{Hg_0^*}^{\perp}(H\Delta g^*) |_{S_1}). \label{eq: restricted isometry}
\end{equation}
Now, by the following rank inequality
\begin{align*}
\rank(\Proj_{Hg_0^*}(H\Delta g_0^*)) = \rank(\Proj_{Hg_0^*}(H\hat{g}^*) + Hg_0^*) \leq 2\rank(Hg_0^*).
\end{align*}
and the fact that we are on an event such that
\begin{equation*}
    \frac{\sigma_u^2}{2}|\Delta g^*|_{S_2}^2 \leq \frac{1}{\bar{N}}|X \Delta g^*|_{S_2}^2 \leq \frac{3\sigma_u^2}{2}|\Delta g^*|_{S_2}^2,
\end{equation*}
we have 
\begin{align*}
    &\frac{\sigma_u^2}{2T}|H\Delta g^*|_{S_2}^2 \leq \frac{\sigma_u^2}{2}|\Delta g^*|_{S_2}^2 \leq \frac{1}{\bar{N}}|X \Delta g^*|_{S_2}^2 \\
    &\leq \frac{5\lambda }{6}  \sqrt{\rank(\Proj_{Hg_0^*}(H\Delta g^*)}| \Proj_{Hg_0^*}(H\Delta g^*)|_{S_2}\\
    &\leq \frac{5\sqrt{2}\lambda }{6}  d_0^{1/2}| H\Delta g^*|_{S_2}\leq \frac{5\sqrt{2}\lambda }{6}  (d_0T)^{1/2}| \Delta g^*|_{S_2}  \\
    &\leq \frac{5\sqrt{3}\lambda }{6}  (d_0T)^{1/2}  \sqrt{\frac{1}{\bar{\bar{N}}}}\frac{|X\Delta g^*|_{S_2}}{\sigma_u}.
\end{align*}
This implies the fast rates  in \eqref{eq: slow and fast rates prediction}, \eqref{eq: slow and fast rates estimation}, and \eqref{eq: Fast rate for the Hankel estimation}.
Also, since $\hat{g}$ satisfies \eqref{eq: restricted isometry}, we have
\begin{equation} \label{eq: cone condition}
5| \Proj_{Hg_0^*}(H\Delta g^*)|_{S_1} \leq |\Proj_{Hg_0^*}^{\perp}(H\Delta g^*) |_{S_1}.
\end{equation}
This gives
\begin{align*}
    &\quad | H\Delta g^*|_{S_1} \leq |\Proj_{Hg_0^*}(H\Delta g^*)|_{S_1} + | \Proj_{Hg_0^*}^{\perp}(H\Delta g^*) |_{S_1} \\
    &\leq 6|\Proj_{Hg_0^*}(H\Delta g^*)|_{S_1}  
    \leq 6\sqrt{2} d_0^{1/2} |H\Delta g^*|_{S_2} \leq \frac{20 d_0^{1/2}\lambda T}{\sigma_u^2}.
\end{align*}
But, $|H\Delta g_0^*|_{S_p} = \left(\sum \limits_{i=1}^{d} s_{i}^p \right)^{1/p} = |s|_p$ where $s$ is the vector of singular values. Therefore, by the norm interpolation identity $|s|_p \leq (|s|_1)^{2/p-1}(|s|_2)^{2-2/p}$ for $p \in [0,1]$, we finally obtain
\begin{align*}
    |H\Delta g^*|_{S_p} &\leq (|H\Delta g^*|_{S_1})^{2/p-1}(|H\Delta g^*|_{S_2})^{2-2/p} \leq \frac{20 d_0^{1/p}\lambda T}{\sigma_u^2}.
\end{align*}
\end{proof}

\begin{remark}
The condition \eqref{eq: condition on sample size for Hankel penalized regression} on the sample size is likely to be sub-optimal. One expects that the factor $T$ should be replaced by $d_0$. The factor $T$ comes from the use of the concentration result of Theorem \ref{thm: Restricted eigenvalue property}. While this Theorem gives the right rate for the input covariates' concentration, the result is stronger than needed. Indeed, Theorem \ref{thm: Restricted eigenvalue property} provides us with an event in which for all $g \in \mathcal{M}_{p \times (2T-1)r}(\mathbb{R})$ we have

\begin{equation} 
    \frac{\sigma_u^2}{2}|g|_{S_2}^2 \leq \frac{1}{\bar{N}}|Xg^*|_{S_2}^2 \leq \frac{3\sigma_u^2}{2}|g|_{S_2}^2,
\end{equation} 
while the proof needs such a control only on the set defined by the cone condition \eqref{eq: cone condition}.
\end{remark}

\begin{open problem}
Show that for all $g \in \mathcal{M}_{p \times (2T-1)r}(\mathbb{R})$ such that $|g|_{S_2}\leq 1$ and \eqref{eq: cone condition} hold then 
\begin{equation*}
        \left|\frac{1}{\bar{N}}|Xg^*|_{S_2}^2-\sigma_u^2|g^*|_{S_2}^2\right| \lesssim \sigma_u^2\sqrt{\frac{d_0}{\bar{N}}}
        ,
\end{equation*}
up to logarithmic terms and lower order terms. 
\end{open problem}
While the condition \eqref{eq: condition on sample size for Hankel penalized regression} on $\bar{N}$ is likely to be suboptimal, we note that it is still less restrictive than the sample complexity \eqref{eq: sample complexity} which will play a major role in the analysis of Algorithm \ref{algo 1}. As we shall see below, for this reason, the condition \eqref{eq: condition on sample size for Hankel penalized regression}  will not affect the upcoming results on the estimation of the parameters $(\bar{A},\bar{B},\bar{C})$.

In the following proposition we show that the SVD decomposition of the Hankel matrix obtained from the Makov parameters estimate $\hat{g}$ given in \eqref{g-hat} can be used to recover the system's order $d_0$, if given a lower bound on the smallest singular value of the true Hankel matrix of Markov parameters $Hg^*_0$. We also show that the fast rate for the spectral loss in \eqref{eq: Fast rate for the Hankel estimation} implies a fast rate for the truncation of the SVD decomposition.

To this end, we consider the SVD decomposition of the Hankel matrix of the estimated parameter $\hat{g}$ given by $H\hat{g}^* = \sum \limits_{i=1}^{\rank(H\hat{g}^*)} \hat{s}_i \hat{u}_i \hat{v}_i^*$ and define the truncation dimension $\check{d}_{\xi}$ and the truncated SVD matrix $\hat{\mathcal{H}}_{\check{d}_{\xi}}$ of the estimated Hankel matrix as: 
\begin{equation}\label{d-xi}
    \check{d}_{\xi} := \sum \limits_{i=1}^{\rank(H\hat{g}^*
    )} \mathbf{1}\{\hat{s}_i \geq 2\xi \} \quad \text{and} \quad \hat{\mathcal{H}}_{\check{d}_{\xi}} := \sum \limits_{i=1}^{\rank(H\hat{g}^*
    )} \mathbf{1}\{\hat{s}_i \geq 2\xi \} \hat{u}_i \hat{v}_i^*.   
\end{equation}

\begin{proposition} \label{prop: Fast rate and rank recovery via SVD}
Assume the same conditions on $(X_{2T},y_{2T}), \dots, (X_N,y_N)$ as in Theorem \ref{thm: Performance of the Hankel penalized regression estimator} and suppose that 
$$
s_{d_0} (Hg_0^*) \geq 3\xi
$$ 
for some $\xi > 0$. Then, there exists an absolute positive constant $c$ such that for the values of $\bar{N}$ given by
\begin{equation} \label{eq: restatment of the condition on sample complexity}
    \bar{N} \geq c d_0 TN_0 \vee T\log{\frac{1}{\delta}} \vee \frac{\phi^2 d_0 T^2}{\xi^2} \left(N_0 \vee \log{\frac{1}{\delta}} \right) \vee \frac{\phi d_0^{1/2} T\log(T)}{\xi} \left(N_0 \vee \log{\frac{1}{\delta}} \right),
\end{equation}
the dimension $\check{d}_{\xi}$ and then estimate $\hat{\mathcal{H}}_{\check{d}_{\xi}}$ defined in \eqref{d-xi}, satisfy with probability at least $1-2\delta$ the following. 
\begin{itemize}
    \item Exact rank recovery:
        \begin{equation} \label{eq: SVD rank recovery}
            \check{d}_{\xi} = d_0.
        \end{equation} 
    \item Lower bound over the least singular value of the truncated estimate:
        \begin{equation} \label{eq: SVD minimum singular value control}
            s_{\check{d}_{\xi}}(H\hat{g}) \geq 2\xi.
        \end{equation}
    \item Lower bound over the singular values after the truncated threshold:
        \begin{equation} \label{eq: SVD tail control}
            \text{for all}\,\, d \in \llbracket \check{d}_{\xi}+1 , \rank(H \hat{g}^*)\rrbracket, \qquad \hat{s}_i \leq \left(\sum \limits_{i = d_0 + 1}^{\rank(H \hat{g}^*)}  \hat{s}_i^2\right)^{1/2}  \leq \xi.
        \end{equation}
    \item Fast rate for the truncated estimate on the $2$-loss:
        \begin{equation} \label{eq: SVD fast rate}
            |\hat{\mathcal{H}}_{\check{d}_{\xi}} -H g_0^*|_{S_2} \leq \frac{10\sqrt{2}}{3\sigma_u^2}d_0^{1/2}\lambda T.
        \end{equation}
\end{itemize}
\end{proposition}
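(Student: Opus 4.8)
The plan is to convert the $S_2$-estimation guarantee \eqref{eq: Fast rate for the Hankel estimation} of Theorem \ref{thm: Performance of the Hankel penalized regression estimator} into control over the singular values of $H\hat{g}^*$ by means of two classical matrix perturbation facts: Weyl's inequality for singular values and the Eckart--Young--Mirsky characterisation of the best low-rank approximation. The whole argument takes place on the event of probability at least $1-2\delta$ on which the conclusions of Theorem \ref{thm: Performance of the Hankel penalized regression estimator} hold; note that condition \eqref{eq: restatment of the condition on sample complexity} is strictly stronger than \eqref{eq: condition on sample size for Hankel penalized regression}, so that event is available. Writing $\eta := \mathcal{L}^H_2(\hat{g},g_0) = |H\hat{g}^*-Hg_0^*|_{S_2}$, the key reduction is to prove that condition \eqref{eq: restatment of the condition on sample complexity} forces $\eta \leq \xi$; everything else follows deterministically from this single inequality together with the exact rank $\rank(Hg_0^*)=d_0$, which holds under Assumption \ref{Assumption: dimension less than its estimate}.

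Establishing $\eta \leq \xi$ is the quantitative heart of the proof and the step I expect to require the most care. I would start from the fast rate $\eta \leq \tfrac{5\sqrt{2}}{3\sigma_u^2}d_0^{1/2}\lambda T$ of \eqref{eq: Fast rate for the Hankel estimation} and substitute the explicit value of $\lambda$ from \eqref{lambda}, which reduces the claim to $\phi\, d_0^{1/2} T\,\big(\sqrt{N_0/\bar{N}}\vee \log(T)N_0/\bar{N}\vee \sqrt{\log(1/\delta)/\bar{N}}\vee \log(T)\log(1/\delta)/\bar{N}\big)\lesssim \xi$. The obstacle is purely bookkeeping: each of the four branches of the maximum must be dominated separately, and matching $\sqrt{N_0/\bar{N}}$ and $\sqrt{\log(1/\delta)/\bar{N}}$ against $\xi$ produces the two $\xi^{-2}$ terms of \eqref{eq: restatment of the condition on sample complexity}, while matching the two $1/\bar{N}$ branches produces the two $\xi^{-1}$ terms; taking the common refinement $N_0\vee\log(1/\delta)$ reproduces \eqref{eq: restatment of the condition on sample complexity} up to the absolute constant absorbed in $c$. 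I would keep the leading dimensional terms $d_0TN_0$ and $T\log(1/\delta)$ from the base condition to guarantee that the restricted eigenvalue property of Theorem \ref{thm: Restricted eigenvalue property} is in force as well.

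With $\eta \leq \xi$ secured, I would invoke Weyl's inequality $|s_i(H\hat{g}^*)-s_i(Hg_0^*)|\leq |H\hat{g}^*-Hg_0^*|_{S_\infty}\leq \eta \leq \xi$ for every index $i$. Since $Hg_0^*$ has rank exactly $d_0$ we have $s_i(Hg_0^*)=0$ for $i>d_0$ and $s_{d_0}(Hg_0^*)\geq 3\xi$ by hypothesis. Hence for $i\leq d_0$ one gets $\hat{s}_i\geq s_i(Hg_0^*)-\xi\geq 3\xi-\xi=2\xi$, whereas for $i>d_0$ one gets $\hat{s}_i\leq 0+\xi<2\xi$. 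Counting the indices with $\hat{s}_i\geq 2\xi$ gives exactly $d_0$, which is \eqref{eq: SVD rank recovery}, and the same chain yields \eqref{eq: SVD minimum singular value control} since $s_{\check{d}_{\xi}}(H\hat{g})=s_{d_0}(H\hat{g}^*)\geq 2\xi$. For the tail bound \eqref{eq: SVD tail control} I would apply Eckart--Young--Mirsky: because $Hg_0^*$ is a rank-$d_0$ matrix, $\big(\sum_{i>d_0}\hat{s}_i^2\big)^{1/2}=\min_{\rank(M)\leq d_0}|H\hat{g}^*-M|_{S_2}\leq |H\hat{g}^*-Hg_0^*|_{S_2}=\eta\leq \xi$, and each individual $\hat{s}_i$ with $i>d_0$ is bounded by this same $S_2$-tail.

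Finally, for the fast rate \eqref{eq: SVD fast rate} I would note that $\check{d}_{\xi}=d_0$ makes $\hat{\mathcal{H}}_{\check{d}_{\xi}}$ precisely the best rank-$d_0$ approximation of $H\hat{g}^*$, so $|\hat{\mathcal{H}}_{\check{d}_{\xi}}-H\hat{g}^*|_{S_2}=\big(\sum_{i>d_0}\hat{s}_i^2\big)^{1/2}\leq \eta$ by the same Eckart--Young--Mirsky identity. A single application of the triangle inequality then gives $|\hat{\mathcal{H}}_{\check{d}_{\xi}}-Hg_0^*|_{S_2}\leq |\hat{\mathcal{H}}_{\check{d}_{\xi}}-H\hat{g}^*|_{S_2}+|H\hat{g}^*-Hg_0^*|_{S_2}\leq 2\eta$, and substituting the fast rate \eqref{eq: Fast rate for the Hankel estimation} for $\eta$ yields the constant $\tfrac{10\sqrt{2}}{3\sigma_u^2}$ announced in \eqref{eq: SVD fast rate}. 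The only genuine difficulty is the calibration of the sample size in the second paragraph; the three remaining items are immediate consequences of perturbation theory once $\eta\leq\xi$ is in hand.
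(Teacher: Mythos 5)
Your proposal is correct and follows essentially the same route as the paper's proof: reduce the sample-size condition to $|H\hat{g}^*-Hg_0^*|_{S_2}\leq\xi$ via the fast rate \eqref{eq: Fast rate for the Hankel estimation}, then combine Weyl's inequality with the Eckart--Young--Mirsky characterisation of the truncated SVD and a triangle inequality to obtain rank recovery, the tail control, and the factor-of-two fast rate. Your direct two-sided application of Weyl (giving $\hat{s}_i\geq 2\xi$ for $i\leq d_0$ and $\hat{s}_i\leq\xi$ for $i>d_0$) is in fact a slightly cleaner packaging than the paper's contradiction argument for $\rank(H\hat{g}^*)\geq d_0$, but it is not a different method.
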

\begin{proof}
By the obtained sample complexity \eqref{eq: sample complexity} it follows that the condition on $\bar{N}$ in \eqref{eq: restatment of the condition on sample complexity} implies that, for a large enough absolute constant $c$, 
\begin{equation*}
    \frac{5\sqrt{2}}{3\sigma_u^2}d_0^{1/2}\lambda T \leq \xi
\end{equation*}
on the same event defined in Theorem \ref{thm: Performance of the Hankel penalized regression estimator}. Therefore, in view the same Theorem and Weyl's inequality, with the same probability of at least $1-2\delta$, we also have
\begin{align*}
    |s_d(H\hat{g}^*)-s_d(Hg^*_0)|\leq |H\Delta g^*|_{S_\infty} \leq |H\Delta g^*|_{S_2} \leq \frac{5\sqrt{2}}{3\sigma_u^2}d_0^{1/2}\lambda T \leq \xi.
\end{align*}
Now, if we assume that $\rank(H\hat{g}^*) < d_0$, then $s_{\min}(Hg^*) = s_{d_0}(Hg^*_0)$ and $s_{d_0}(H\hat{g}^*) = 0$. Thus, again by Weyl's inequality, we have
\begin{align*}
    s_{\min}(Hg_0^*) &= |s_{d_0}(Hg_0^*) - s_{d_0}(H\hat{g}^*)| \leq |s_1(H\Delta g^*)| \\
    &\leq |H\Delta g^*|_{S_2} \leq \frac{5\sqrt{2}}{3\sigma_u^2}d_0^{1/2}\lambda T \leq \xi,
\end{align*}
which contradicts the assumption
\begin{equation*}
    s_{\min}(Hg) \geq 3\xi.
\end{equation*}
Therefore, $d_0 \geq \rank(H\hat{g})$. This also means that
\begin{equation} \label{eq: lower bound on the estimated singular value}
    |s_{d_0}(Hg) - s_{d_0}(H\hat{g})| \leq \xi \quad \text{and} \quad s_{d_0}(H\hat{g}) \geq 2\xi.
\end{equation}
Since we now know that $\hat{d} \geq d_0 $, we consider the following decomposition for the SVD representation 
\begin{equation*}
    \hat{\mathcal{H}} = \hat{\mathcal{H}}_{d_0} + \hat{\mathcal{H}}_{\bar{d}} = \sum \limits_{i=1}^{d_0} \hat{s}_i \hat{u}_i \hat{v}_i^* + \sum \limits_{i = d_0 + 1}^{\rank(H \hat{g})}  \hat{s}_i \hat{u}_i \hat{v}_i^*
    = \begin{bmatrix} U_d &
    U_{\bar{d}}\end{bmatrix} \begin{bmatrix} \Sigma_d &\\
    &\Sigma_{\bar{d}}\end{bmatrix}\begin{bmatrix} V_d^* \\
    V_{\bar{d}}^*\end{bmatrix}.
\end{equation*}
Now, as the truncated SVD decomposition to rank $d_0$ solves the optimization problem:
\begin{equation*}
    \hat{\mathcal{H}}_{\bar{d}} \in \arg \min \limits_{H:\ \rank(H)\leq d} |\hat{\mathcal{H}} - H|_{S_2},
\end{equation*}
we obtain
\begin{equation} \label{eq: truncated SVD solve best rank approximation}
    |\hat{\mathcal{H}}_{\bar{d}}|_{S_2} = \min \limits_{H:\ \rank(H)\leq d} |\hat{\mathcal{H}} - H|_{S_2}  \leq |H\Delta g^*|_{S_2}. 
\end{equation}
In particular, 
\begin{equation*}
     \text{for all}\,\,  d \in \llbracket d_0+1 , \rank(H \hat{g}^*)\rrbracket \qquad \hat{s}_i \leq \left(\sum \limits_{i = d_0 + 1}^{\rank(H \hat{g}^*)}  \hat{s}_i^2\right)^{1/2}  \leq \xi.
\end{equation*}
This inequality together with \eqref{eq: lower bound on the estimated singular value} yield the following  result on rank recovery:
\begin{equation*}
    \check{d}_{\xi} = \sum \limits_{i=1}^{\rank(H\hat{g}^*
    )} \mathbf{1}\{\hat{s}_i \geq 2\xi \} = d.
\end{equation*}
It also yields \eqref{eq: SVD tail control} as well. 

Now, since we have
\begin{align*}
    |\hat{\mathcal{H}}_d -H g_0^*|_{S_2} &=|H\hat{g}  -H g_0^* - \hat{\mathcal{H}}_{\bar{d}}|_{S_2}
    \leq |H\Delta g|_{S_2} + |\hat{\mathcal{H}}_{\bar{d}}|_{S_2} \\
    &\leq 2|H\Delta g^*|_{S_2} \leq \frac{10\sqrt{2}}{3\sigma_u^2}d_0^{1/2}\lambda T,
\end{align*}
in view of \eqref{eq: truncated SVD solve best rank approximation}, we also have the fast rate for the SVD estimate in \eqref{eq: SVD fast rate}.
\end{proof}

\subsection{Error control for the Ho-Kalman algorithm estimates} \label{sec: Ho-Kalman}

This section provides stability results in the Hilbert-Schmidt norm for a version of an estimation procedure based on a variant of the Ho-Kalman algorithm. 
The variant of the Ho-Kalman algorithm in question is the one that obtains a minimal balanced realization starting from the SVD decomposition of the Hankel matrix of $T$ Markov parameters, for $T \geq d_0+1$. Indeed, the Ho-Kalman algorithm computes, up to a similarity transform, the observability and controllability matrices are respectively
    \begin{equation} \label{eq: constructuion of the observabillity and controllabilty}
        \bar{\mathcal{O}} =U_0 \Sigma_0^{1/2} \quad \text{ and } \quad \bar{\mathcal{C}} =\Sigma_0^{1/2} V_0^*,    
    \end{equation} 
and the minimal balanced realization (see Definition \ref{definition minimal balanced realization}) defined by
    \begin{equation*}
        \bar{A} := \left(\bar{\mathcal{O}}_{1:r(T-1),1:d_0}\right)^{\dagger}\bar{\mathcal{O}}_{r+1:rT,1:d_0},\,\,\, \bar{B} := \bar{\mathcal{O}}_{1:d_0,1:r},\,\,\,  \bar{C} := \bar{\mathcal{O}}_{1:p,1:d_0}.
    \end{equation*}
    
    Assuming that we have obtained an estimate $\hat{\mathcal{H}}_T$ of the Hankel matrix of order $T$ with a $\rank$ that is higher than the dimension $d_0$ and of the dimension $\check{d}_{\xi}$ such that the Hilbert-Schmidt error $| \hat{\mathcal{H}}_{T} - Hg_0^* |_{S_2}$ is small. 

The Ho-Kalman based estimation algorithm we introduce here yields an estimate of the minimal balanced realization by mimicking the Ho-Kalman algorithm described above. It starts from a truncated SVD decomposition $\hat{\mathcal{H}}_{\check{d}_{\xi}} = \hat{U}_\xi \hat{\Sigma}_\xi \hat{V}_\xi^*$ of the matrix $\hat{\mathcal{H}}_{T}$ to the smaller estimated dimension $\check{d}_{\xi}$ and constructs estimates of both the observability and controllablity matrices  
\begin{equation*}
    \hat{\mathcal{O}}:=\hat{U}_\xi \hat{\Sigma}_\xi^{1/2}, \quad \hat{\mathcal{C}}:=\hat{\Sigma}_\xi^{1/2} \hat{V}_\xi^*.
\end{equation*}
Thereafter, it provides an estimated minimal balanced realization as
\begin{equation*}
    \hat{A} = \left(\hat{\mathcal{O}}_{1:r(T-1),1:d_\xi}\right)^{\dagger}\hat{\mathcal{O}}_{r+1:rT,1:d_\xi},\,\,\, \hat{B}:= \hat{\mathcal{O}}_{1:d_\xi,1:r},\,\,\, \hat{C}:= \hat{\mathcal{O}}_{1:p,1:d_\xi}.
\end{equation*}
The next theorem provides error bounds for these estimates under the assumption that we have $\check{d}_{\xi} = d_0$, 
\begin{theorem} \label{thm: stability of Ho-kalman}
Suppose that $\check{d}_{\xi} = d_0$. Set 
\begin{equation} \label{eq: notation for Ho-Kalman} 
    \bar{\mathcal{O}}_{1:r(T-1),1:d_0} = \bar{\mathcal{O}}^+.
\end{equation}
If the following stability assumption holds
\begin{equation} \label{eq: Assumption for Ho-Kalman}
       | \hat{\mathcal{H}}_{T} - Hg_0^* |_{S_\infty} \wedge | \hat{\mathcal{H}}_{T} - Hg_0^* |_{S_2} \leq   \frac{\left(\sqrt{2} - 1\right)^{1/2}s_{d_0}(\bar{\mathcal{O}}^+)s_{d_0}^{1/2}(Hg_0^*)}{2\sqrt{2}},   
\end{equation}
then, there exists an orthonormal matrix $R$ such that the following holds. 
\begin{itemize}
    \item The error on the observability and controllability matrices is controlled by the error on the truncation:
        \begin{align*}
            &|\hat{\mathcal{O}} - \bar{\mathcal{O}}R|_{S_2}^2 + |\hat{\mathcal{C}}_d  - R^*\bar{\mathcal{C}}|_{S_2}^2 \leq \frac{2}{\sqrt{2}-1} \frac{|\hat{\mathcal{H}}_{\check{d}_{\xi}} -H g_0^*|_{S_2}^2}{s_{d_0}(Hg_0^*)}.
        \end{align*} 
    \item The error on the $C$ and $B$ matrices is controlled by the error on the truncation:
        \begin{align*}
            |\hat{C}   - \bar{C}R|_{S_2}^2 + | \hat{B}  - R^*\bar{B}|_{S_2}^2 \leq \frac{2}{\sqrt{2}-1} \frac{|\hat{\mathcal{H}}_{\check{d}_{\xi}} -H g_0^*|_{S_2}^2}{s_{d_0}(Hg_0^*)}.  
        \end{align*} 
    \item The error on the  $A$ matrix is controlled by the error on the truncation:
        \begin{equation*}
            |\hat{A} - R^*\bar{A} R|_{S_2} \leq  \frac{2^{3/2}\left(1 + \left|\bar{A} \right|_{S_\infty} \right)}{\left(\sqrt{2} - 1\right)^{1/2}s_{d_0}(\bar{\mathcal{O}}^+)s^{1/2}_{d_0}(Hg_0^*)}  |\hat{\mathcal{H}}_{\check{d}_{\xi}} -H g_0^*|_{S_2}.
        \end{equation*} 
\end{itemize}
\end{theorem}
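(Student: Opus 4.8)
The plan is to reduce all three error bounds to the single scalar $|\hat{\mathcal{H}}_{\check{d}_{\xi}} - Hg_0^*|_{S_2}$ by a perturbation analysis of the balanced factorization, and then to push this control through the algebraic operations that extract $\hat{A},\hat{B},\hat{C}$ from the SVD factors. The orthonormal matrix $R$ and the first bullet should be produced at once. Since $\check{d}_{\xi}=d_0$, both $Hg_0^* = \bar{\mathcal{O}}\bar{\mathcal{C}}$ and $\hat{\mathcal{H}}_{\check{d}_{\xi}} = \hat{\mathcal{O}}\hat{\mathcal{C}}$ are rank-$d_0$ matrices in balanced form ($\bar{\mathcal{O}} = U_0\Sigma_0^{1/2}$, $\bar{\mathcal{C}} = \Sigma_0^{1/2}V_0^*$, and the hatted analogues from the truncated SVD). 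First I would invoke a Procrustes/balanced-factorization perturbation lemma to obtain an orthonormal $R$ with $|\hat{\mathcal{O}} - \bar{\mathcal{O}}R|_{S_2}^2 + |\hat{\mathcal{C}} - R^*\bar{\mathcal{C}}|_{S_2}^2 \le \tfrac{2}{\sqrt{2}-1}\tfrac{|\hat{\mathcal{H}}_{\check{d}_{\xi}}-Hg_0^*|_{S_2}^2}{s_{d_0}(Hg_0^*)}$, which is exactly the first bullet; the constant $\tfrac{2}{\sqrt{2}-1}$ and the division by $s_{d_0}(Hg_0^*)$ are the signatures of this lemma. This step \emph{defines} $R$, and all subsequent bounds use this same $R$.

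The second bullet is then immediate and requires no new work: $\hat{C} - \bar{C}R$ is the leading $p$-row block of $\hat{\mathcal{O}} - \bar{\mathcal{O}}R$, and $\hat{B} - R^*\bar{B}$ is the leading $r$-column block of $\hat{\mathcal{C}} - R^*\bar{\mathcal{C}}$, so passing to submatrices only decreases the Frobenius norm and gives $|\hat{C}-\bar{C}R|_{S_2}^2 + |\hat{B} - R^*\bar{B}|_{S_2}^2 \le |\hat{\mathcal{O}}-\bar{\mathcal{O}}R|_{S_2}^2 + |\hat{\mathcal{C}}-R^*\bar{\mathcal{C}}|_{S_2}^2$, to which the first bullet applies verbatim.

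For the $A$-estimate I would exploit the exact shift identity. Writing $\bar{\mathcal{O}}^{+} = \bar{\mathcal{O}}_{1:r(T-1),1:d_0}$ and its up-shift $\bar{\mathcal{O}}^{++} = \bar{\mathcal{O}}_{r+1:rT,1:d_0}$, the balanced observability matrix obeys $\bar{\mathcal{O}}^{+}\bar{A} = \bar{\mathcal{O}}^{++}$ exactly, hence $(\bar{\mathcal{O}}^{+}R)(R^*\bar{A}R) = \bar{\mathcal{O}}^{++}R$. Setting $E^{+} := \hat{\mathcal{O}}^{+} - \bar{\mathcal{O}}^{+}R$ and $E^{++} := \hat{\mathcal{O}}^{++} - \bar{\mathcal{O}}^{++}R$ (both blocks of $\hat{\mathcal{O}}-\bar{\mathcal{O}}R$) and using $(\hat{\mathcal{O}}^{+})^{\dagger}\hat{\mathcal{O}}^{+} = I$, a short manipulation yields the clean identity $\hat{A} - R^*\bar{A}R = (\hat{\mathcal{O}}^{+})^{\dagger}\big(E^{++} - E^{+}\,R^*\bar{A}R\big)$. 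Taking $|\cdot|_{S_2}$ and using $|(\hat{\mathcal{O}}^{+})^{\dagger}|_{S_\infty} = 1/s_{d_0}(\hat{\mathcal{O}}^{+})$, $|R^*\bar{A}R|_{S_\infty} = |\bar{A}|_{S_\infty}$, and $|E^{+}|_{S_2},|E^{++}|_{S_2}\le |\hat{\mathcal{O}}-\bar{\mathcal{O}}R|_{S_2}$ gives $|\hat{A} - R^*\bar{A}R|_{S_2}\le \tfrac{1+|\bar{A}|_{S_\infty}}{s_{d_0}(\hat{\mathcal{O}}^{+})}\,|\hat{\mathcal{O}}-\bar{\mathcal{O}}R|_{S_2}$; substituting the first bullet then produces the advertised form, provided $s_{d_0}(\hat{\mathcal{O}}^{+})$ can be replaced by a constant multiple of $s_{d_0}(\bar{\mathcal{O}}^{+})$.

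The main obstacle, and the only place where the stability assumption \eqref{eq: Assumption for Ho-Kalman} genuinely bites, is the lower bound $s_{d_0}(\hat{\mathcal{O}}^{+})\ge \tfrac12 s_{d_0}(\bar{\mathcal{O}}^{+})$ that both guarantees $\hat{\mathcal{O}}^{+}$ has full column rank (so the identity $(\hat{\mathcal{O}}^{+})^{\dagger}\hat{\mathcal{O}}^{+}=I$ is legitimate) and converts the last display into the claimed constant $2^{3/2}$ via $2\cdot(\tfrac{2}{\sqrt{2}-1})^{1/2} = \tfrac{2^{3/2}}{(\sqrt{2}-1)^{1/2}}$. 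I would derive it from Weyl's inequality, $s_{d_0}(\hat{\mathcal{O}}^{+})\ge s_{d_0}(\bar{\mathcal{O}}^{+}R) - |E^{+}|_{S_\infty} = s_{d_0}(\bar{\mathcal{O}}^{+}) - |E^{+}|_{S_\infty}$, and then control $|E^{+}|_{S_\infty}\le |\hat{\mathcal{O}}-\bar{\mathcal{O}}R|_{S_\infty}$ by the \emph{spectral-norm} version of the factorization perturbation. This is exactly why the hypothesis is stated as a minimum of the $S_\infty$ and $S_2$ errors: the $S_\infty$ part feeds the invertibility/Weyl step, while the $S_2$ part feeds the rate. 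One also needs the best-rank-$d_0$ approximation bound $|\hat{\mathcal{H}}_{\check{d}_{\xi}}-Hg_0^*| \le 2|\hat{\mathcal{H}}_{T} - Hg_0^*|$ (valid in both norms) to connect the truncation error to the hypothesis, and the threshold $\tfrac{(\sqrt{2}-1)^{1/2}}{2\sqrt{2}}$ is tuned precisely so that the resulting spectral perturbation stays below $\tfrac12 s_{d_0}(\bar{\mathcal{O}}^{+})$; balancing these constants is the delicate bookkeeping of the argument.
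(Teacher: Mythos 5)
Your proposal is correct and follows essentially the same route as the paper: the balanced-factorization perturbation lemma (Lemma~5.14 of Tu et al.) produces $R$ and the first bullet, the second bullet is the submatrix observation, and the $A$-bound combines the shift identity, $|(\hat{\mathcal{O}}^+)^{\dagger}|_{S_\infty}=1/s_{d_0}(\hat{\mathcal{O}}^+)$, and Weyl's inequality to get $s_{d_0}(\hat{\mathcal{O}}^+)\geq \tfrac12 s_{d_0}(\bar{\mathcal{O}}^+)$. The only cosmetic differences are that you state the exact identity $\hat{A}-R^*\bar{A}R=(\hat{\mathcal{O}}^+)^{\dagger}(E^{++}-E^{+}R^*\bar{A}R)$ before taking norms (the paper interleaves the triangle inequality), and that no separate ``spectral-norm version'' of the perturbation lemma is needed --- the paper simply uses $|\cdot|_{S_\infty}\leq|\cdot|_{S_2}$ together with $s_{d_0}(\bar{\mathcal{O}}^+)\leq s_{d_0}^{1/2}(Hg_0^*)$ and the factor-of-two truncation bound to verify both the lemma's precondition and the Weyl step.
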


This result provides a robustness analysis of the variant of a Ho-Kalman algorithm based estimation procedure described at the start of this section. The result is described in term of the $|\cdot|_{S_2}$-norm and shows that, under the stability condition \eqref{eq: Assumption for Ho-Kalman}, it is possible to recover up to an orthonormal matrix $R$ the minimal balanced realization defined in \ref{definition minimal balanced realization} since we can bound the loss function $\mathcal{L}^{\mathcal{M}}_2(\hat{\mathcal{M}}, \bar{\mathcal{M}})$ in term of $|\hat{\mathcal{H}}_{\check{d}_{\xi}} -H g_0^*|_{S_2}$ as follows. 
        \begin{equation*}
            \mathcal{L}^{\mathcal{M}}_2(\hat{\mathcal{M}}, \bar{\mathcal{M}}) \leq  \frac{2^{3/2}\left(1 + \left|\bar{A} \right|_{S_\infty} \right)}{\left(\sqrt{2} - 1\right)^{1/2}s_{d_0}(\bar{\mathcal{O}}^+)s^{1/2}_{d_0}(Hg_0^*)}  |\hat{\mathcal{H}}_{\check{d}_{\xi}} -H g_0^*|_{S_2}.
        \end{equation*}
In the next section we will use a slightly weaker version of this result to provide $\mathcal{L}^{\mathcal{M}}_2$ guarantees for Algorithm \ref{algo 1}, namely we replace $s_{d_0}(\bar{\mathcal{O}}^+)$ with $s^{1/2}_{d_0}(Hg_0^*)$ both in the robustness condition \eqref{eq: Assumption for Ho-Kalman} and in the error control of various estimates. This can be done since as argued in the proof $s_{d_0}(\bar{\mathcal{O}}^+) \leq s^{1/2}_{d_0}(Hg_0^*)$ and it is done so to only assume the knowledge of a lower bound  $s^{1/2}_{d_0}(\bar{\mathcal{O}}^+)$. Otherwise we could work with the original statement by assuming the knowledge of a lower bound on $s_{d_0}(\bar{\mathcal{O}}^+)s^{1/2}_{d_0}(Hg_0^*)$. The condition \eqref{eq: Assumption for Ho-Kalman} is stated with $| \hat{\mathcal{H}}_{T} - Hg_0^* |_{S_\infty} \wedge | \hat{\mathcal{H}}_{T} - Hg_0^* |_{S_2}$ which is always equal to $| \hat{\mathcal{H}}_{T} - Hg_0^* |_{S_\infty}$, it is done this way simply since sometimes it is easier to have a control over $| \hat{\mathcal{H}}_{T} - Hg_0^* |_{S_2}$ as it is the case for the Hankel penalized regression estimator in Theorem \ref{thm: Performance of the Hankel penalized regression estimator} .         

The version of the Ho-Kalman based estimator studied here is the one studied in \cite[Theorem ~$4$]{tsiamis2019finite}. Their guarantees suggested that
\begin{equation*}
    \mathcal{L}^{\mathcal{M}}_2(\hat{\mathcal{M}}, \bar{\mathcal{M}}) \leq  c\frac{ d_0\left|Hg_0^* \right|^{1/2}_{S_\infty} |\hat{\mathcal{H}}_{\check{d}_{\xi}} - H g_0^*|_{S_2}}{s_{d_0}^2(\bar{\mathcal{O}}^+)s^{1/2}_{d_0}(Hg_0^*)}  .
\end{equation*}
Our result improves it by replacing the factor $s_{d_0}^{-2}(\bar{\mathcal{O}}^+)s^{-1/2}_{d_0}(Hg_0^*)$ with the smaller factor $s_{d_0}^{-1}(\bar{\mathcal{O}}^+)s^{-1/2}_{d_0}(Hg_0^*)$ in the regime $s_{d_0}(Hg_0^*) \to 0$ and $s_{d_0}(\bar{\mathcal{O}}^+)\to 0$ which was introduced in remark \ref{remark: regime for upper bound comparison} and removing the $d_0$ factor. Another estimator based on the Ho-Kalman algorithm is considered in \cite[Theorem ~$5.2$]{pmlr-v120-sun20a} where it is shown that 
\begin{equation*}
    \mathcal{L}^{\mathcal{M}}_2(\hat{\mathcal{M}}, \bar{\mathcal{M}}) \leq  c\frac{ d_0^{1/2}\left|Hg_0^* \right|_{S_\infty} |\hat{\mathcal{H}}_{\check{d}_{\xi}} - H g_0^*|_{S_\infty}}{s^2_{d_0}(Hg_0^*)}  .
\end{equation*}
Here, we improve the factor $\frac{d_0^{1/2}\left|Hg_0^* \right|_{S_\infty} }{s^{2}_{d_0}(Hg_0^*)} |H\hat{g}^* -H g_0^*|_{S_\infty}$ by $\frac{\left|\bar{A}\right|_{S_\infty}|\hat{\mathcal{H}}_{\check{d}_{\xi}} -H g_0^*|_{S_2} }{s_{d_0}(\bar{\mathcal{O}}^+)s^{1/2}_{d_0}(Hg_0^*)}$ since $s_{d_0}(\bar{\mathcal{O}}^+)$ and $s^{1/2}_{d_0}(Hg_0^*)$  are usually comparable, as we shall see later in \eqref{eq: singular value comparison} where we have that  $s^{1/2}_{d_0}(Hg_0^*)\ge s_{d_0}(\bar{\mathcal{O}}^+)\ge \frac{1}{\sqrt{2}}s^{1/2}_{d_0}(Hg_0^*)$.
\begin{proof}
We start by noting that
\begin{align*}
     | \hat{\mathcal{H}}_{\check{d}_{\xi}} - Hg_0^* |_{S_\infty} \leq | \hat{\mathcal{H}}_{T} - \hat{\mathcal{H}}_{\check{d}_{\xi}} |_{S_\infty} + | \hat{\mathcal{H}}_{T} - Hg_0^* |_{S_\infty}. 
\end{align*}
Since, by assumption, we have $\check{d}_{\xi} = d_0$ and the truncated SVD also minimizes the operator norm cost, we have
\begin{equation*}
    |\hat{\mathcal{H}}_{T} - \hat{\mathcal{H}}_{\check{d}_{\xi}} |_{S_\infty} = \min_{\rank(H) \leq \check{d}_{\xi}} |\hat{\mathcal{H}}_{T} - H|_{S_\infty} \leq | \hat{\mathcal{H}}_{T} - Hg_0^* |_{S_\infty}.
\end{equation*}
Since $\bar{\mathcal{O}}^+$ is a sub-matrix of $\bar{\mathcal{O}}$, we have
\begin{equation} \label{eq: estimates of the smallest singular value}
    s_{d_0}(\bar{\mathcal{O}}^+) \leq s_{d_0}(\bar{\mathcal{O}}) \leq  s_{d_0}^{1/2} (Hg_T^*),
\end{equation}
where the second inequality follows from the construction in \eqref{eq: constructuion of the observabillity and controllabilty}. Therefore,  the condition \eqref{eq: Assumption for Ho-Kalman} implies 
\begin{equation} \label{eq: implication of the condition}
    | \hat{\mathcal{H}}_{\check{d}_{\xi}} - Hg_0^* |_{S_\infty} \leq 2 | \hat{\mathcal{H}}_{T} - Hg_0^* |_{S_\infty} \leq 2 | \hat{\mathcal{H}}_{T} - Hg_0^* |_{S_2} \leq \frac{s_{d_0}(Hg_0^*)}{2}.
\end{equation}
The result for the error on both the observability and controllability matrices will be derived as a direct consequence of the following lemma taken from \cite{pmlr-v48-tu16}. A similar approach was used by \cite{pmlr-v120-sun20a} to analyze the performance of another variant of the Ho-Kalman algorithm. 
\begin{lemma}[Lemma 5.14 in \cite{pmlr-v48-tu16}]
Let $M_1,\ M_2 \in  \mathcal{M}_{n_1\times n_2}(\mathbb{R})$ be two $\rank$ $r$ matrices with SVD decompositions $M_1 = U_1\Sigma_1 V_1^*$ and
$M_2 = U_2 \Sigma_2 V_2^*$. If $  | M_2 - M_1 |_{S_\infty} \leq \frac{s_r(M_1)}{2}$ then there is an orthonormal matrix $R$ such that:
\begin{equation*}
    |U_1 \Sigma_1^{1/2} - U_2 \Sigma_2^{1/2} R|^2_{S_2} + |V_1 \Sigma_1^{1/2}  - R^* V_1 \Sigma_1^{1/2}|^2_{S_2} \leq \frac{2| M_2 - M_1 |^2_{S_2}}{(\sqrt{2}-1)s_r(M_1)}.
\end{equation*} 
\end{lemma}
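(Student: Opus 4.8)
The plan is to read the left-hand side as a \emph{factor distance} and to reduce the asymmetric statement to its symmetric counterpart by Hermitian dilation. Set $L_i:=U_i\Sigma_i^{1/2}$ and $Q_i:=V_i\Sigma_i^{1/2}$, so that $M_i=L_iQ_i^*$, and note that each factorization is \emph{balanced}: $L_i^*L_i=Q_i^*Q_i=\Sigma_i$, whence $s_r(L_i)^2=s_r(Q_i)^2=s_r(M_i)$. Stacking $W_i:=\begin{bmatrix}L_i\\ Q_i\end{bmatrix}$, the quantity to bound is $\min_{R}|W_1-W_2R|_{S_2}^2$ (the second summand in the statement being understood, as the factorization requires, as the right–factor error $|V_1\Sigma_1^{1/2}-V_2\Sigma_2^{1/2}R|_{S_2}^2$). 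I would then introduce the dilation $\tilde M_i:=\begin{bmatrix}0 & M_i\\ M_i^* & 0\end{bmatrix}$, whose nonzero eigenvalues are $\pm s_1(M_i),\dots,\pm s_r(M_i)$ and whose positive part is exactly $\tilde M_{i,+}=\tfrac12 W_iW_i^*=Z_iZ_i^*$ with $Z_i:=\tfrac1{\sqrt2}W_i$. A short computation gives $s_r(\tilde M_{i,+})=s_r(M_i)$ and $\min_R|Z_1-Z_2R|_{S_2}^2=\tfrac12\min_R|W_1-W_2R|_{S_2}^2$.

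The core is then the symmetric factorization–stability estimate: for rank-$r$ positive semidefinite $N_1=Z_1Z_1^*$ and $N_2=Z_2Z_2^*$ with $|N_1-N_2|_{S_\infty}\le s_r(N_1)/2$, there is an orthonormal $R$ with $|Z_1-Z_2R|_{S_2}^2\le |N_1-N_2|_{S_2}^2/(2(\sqrt2-1)s_r(N_1))$. I would prove this by taking $R$ to be the orthogonal Procrustes minimizer of $O\mapsto|Z_1-Z_2O|_{S_2}$, i.e. the orthogonal polar factor of $Z_2^*Z_1$, whose defining property is that $Z_1^*Z_2R$ is symmetric positive semidefinite; equivalently, writing $\Delta:=Z_1-Z_2R$, the matrix $Z_1^*\Delta$ is symmetric. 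Expanding $N_1-N_2=Z_1\Delta^*+\Delta Z_1^*-\Delta\Delta^*$ and taking $|\cdot|_{S_2}^2$, the symmetry of $Z_1^*\Delta$ forces the leading bilinear term to satisfy $|Z_1\Delta^*+\Delta Z_1^*|_{S_2}^2=2|Z_1\Delta^*|_{S_2}^2+2\tr((Z_1^*\Delta)^2)\ge 2s_r(N_1)|\Delta|_{S_2}^2$, using $\tr(Z_1^*Z_1\,\Delta^*\Delta)\ge s_r(N_1)|\Delta|_{S_2}^2$, while the closeness hypothesis bounds $|\Delta|_{S_\infty}$ and lets the quadratic remainder $\Delta\Delta^*$ be reabsorbed; optimizing the trade-off between these two contributions is what produces the constant $2(\sqrt2-1)$.

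Granting the core estimate, I would assemble the bound by applying it to $Z_i=\tfrac1{\sqrt2}W_i$. The Frobenius datum transfers cleanly: since $\tilde M_{i,+}$ is the metric projection of $\tilde M_i$ onto the positive semidefinite cone and that projection is non-expansive in $|\cdot|_{S_2}$, we get $|\tilde M_{1,+}-\tilde M_{2,+}|_{S_2}\le |\tilde M_1-\tilde M_2|_{S_2}=\sqrt2\,|M_1-M_2|_{S_2}$. Substituting $s_r(N_1)=s_r(M_1)$ together with $\min_R|Z_1-Z_2R|_{S_2}^2=\tfrac12\min_R|W_1-W_2R|_{S_2}^2$ then yields exactly $\min_R|W_1-W_2R|_{S_2}^2\le 2|M_1-M_2|_{S_2}^2/((\sqrt2-1)s_r(M_1))$, which is the asserted inequality once $|W_1-W_2R|_{S_2}^2$ is written out as the sum of the left– and right–factor errors.

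The step I expect to be the main obstacle is verifying the operator–norm hypothesis of the core estimate, namely $|\tilde M_{1,+}-\tilde M_{2,+}|_{S_\infty}\le s_r(M_1)/2$, from the given $|M_1-M_2|_{S_\infty}\le s_r(M_1)/2$. The positive–part map is \emph{not} Lipschitz in $|\cdot|_{S_\infty}$ in general: the diagonal blocks of $\tilde M_{i,+}$ involve $|M_i|$ and $|M_i^*|$, and $\bigl||M_1|-|M_2|\bigr|_{S_\infty}$ can exceed $|M_1-M_2|_{S_\infty}$, so one cannot simply reuse the non-expansiveness argument from the Frobenius norm. What rescues the step is the spectral gap furnished by the hypothesis: Weyl's inequality gives $s_r(M_2)\ge s_r(M_1)/2>0$, so $\tilde M_1$ and $\tilde M_2$ have the same number of zero eigenvalues and all nonzero eigenvalues of modulus $\ge s_r(M_1)/2$; no eigenvalue crosses the kink of $x\mapsto x_+$ at the origin, and on spectra separated in this way the positive–part map is well behaved and the needed operator–norm control can be recovered. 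A self-contained alternative that avoids the transfer altogether is the direct route: keep $M_1-M_2=\hat L\Delta_Q^*+\Delta_L\hat Q^*+\Delta_L\Delta_Q^*$ with $\hat L=L_2R$, $\hat Q=Q_2R$, $\Delta_L=L_1-\hat L$, $\Delta_Q=Q_1-\hat Q$, and run the same Procrustes–symmetry and remainder–absorption argument directly on this expansion, which uses $|M_1-M_2|_{S_\infty}$ natively at the price of a bulkier bilinear lower bound.
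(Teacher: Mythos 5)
First, a note on what you are being compared against: the paper never proves this lemma; it is imported verbatim (transcription typo included) from Tu et al.\ \cite{pmlr-v48-tu16}, and your reading of the second summand as $|V_1\Sigma_1^{1/2}-V_2\Sigma_2^{1/2}R|_{S_2}^2$ is the correct repair. Your plan is essentially a reconstruction of the source's argument, and its skeleton is sound: balanced factors, $\tilde M_{i,+}=\tfrac12 W_iW_i^*=Z_iZ_i^*$, Frobenius transfer by nonexpansiveness of the projection onto the PSD cone, and a Procrustes analysis exploiting the symmetry of $Z_1^*\Delta$. There are, however, two genuine gaps. The smaller one: the core estimate \emph{is} the lemma, and you never derive it --- the cross term $\innerl{Z_1\Delta^*+\Delta Z_1^*}{\Delta\Delta^*}=2\tr\bigl[(\Delta^*\Delta)(\Delta^*Z_1)\bigr]$ has no favorable sign (only $Z_1^*Z_2R$, not $Z_1^*\Delta$, is PSD at the Procrustes optimum), so ``reabsorbing the remainder'' and producing $2(\sqrt2-1)$ is not routine. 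The larger one: your resolution of the obstacle you flag is wrong. Spectral separation does \emph{not} make the positive-part map operator-norm Lipschitz: by L\"owner's analysis, the derivative of $x\mapsto x_+$ at $\tilde M_1$ acts on the block coupling the $+s_i$ and $-s_j$ eigenspaces as the Schur multiplier $\bigl[s_i/(s_i+s_j)\bigr]=\bigl[\tfrac12\bigl(1+\tanh(\tfrac{\log s_i-\log s_j}{2})\bigr)\bigr]$, a discretized Hilbert-transform kernel whose norm grows like the logarithm of the number of dyadic scales occupied by the spectrum (Kato's phenomenon). Your gap hypothesis bounds the spectrum away from zero but not its spread, so for ill-conditioned $M_1$ one can have $|M_1-M_2|_{S_\infty}\le\tfrac12 s_r(M_1)$ while $|\tilde M_{1,+}-\tilde M_{2,+}|_{S_\infty}>\tfrac12 s_r(M_1)$; the transfer you hope to ``recover'' is simply false in general.

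The good news is that the transfer is not needed, because the operator-norm hypothesis should enter only through Weyl's inequality applied \emph{directly} to $M_1,M_2$. With $R$ the Procrustes optimum for $(Z_1,Z_2)$, $\Delta=Z_1-Z_2R$ and $Y=Z_1+Z_2R$, one has the exact identity $N_1-N_2=\tfrac12(Y\Delta^*+\Delta Y^*)$, and optimality makes $Y^*\Delta=Z_1^*Z_1-Z_2^*Z_2=\Sigma_1-\Sigma_2$ symmetric, whence
\begin{equation*}
|N_1-N_2|_{S_2}^2=\tfrac12\tr\bigl[(Y^*Y)(\Delta^*\Delta)\bigr]+\tfrac12|\Sigma_1-\Sigma_2|_{S_2}^2\ \ge\ \tfrac12\,\lambda_{\min}(Y^*Y)\,|\Delta|_{S_2}^2 .
\end{equation*}
Moreover $Y^*Y=\Sigma_1+\Sigma_2+2Z_1^*Z_2R\succeq\bigl(s_r(M_1)+s_r(M_2)\bigr)I$ because $Z_1^*Z_2R\succeq0$, and $s_r(M_2)\ge s_r(M_1)-|M_1-M_2|_{S_\infty}\ge\tfrac12 s_r(M_1)$ by Weyl; since the nonzero eigenvalues of $N_i$ are exactly the nonzero singular values of $M_i$, nothing here ever passes through the positive-part map in operator norm. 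Chaining this with your (correct) Frobenius transfer $|N_1-N_2|_{S_2}\le\sqrt2\,|M_1-M_2|_{S_2}$ gives $\min_R|W_1-W_2R|_{S_2}^2=2|\Delta|_{S_2}^2\le\tfrac{16}{3}\,|M_1-M_2|_{S_2}^2/s_r(M_1)$, i.e.\ the lemma up to the constant ($16/3$ instead of $2/(\sqrt2-1)\approx4.83$); recovering the exact constant requires the sharper bookkeeping of \cite{pmlr-v48-tu16}. Your fallback ``direct route'' on the factors is viable for the same reason, but the repair above shows the dilation route itself closes once the hypothesis is used only via Weyl rather than via Lipschitz continuity of $x\mapsto x_+$.
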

Since both $\hat{\mathcal{H}}_{\check{d}_{\xi}} $ and $Hg_0^*$ are of $\rank$ $d_0$, we can use this lemma together with \eqref{eq: implication of the condition} to guarantee on the same event that there exist a matrix $R$ such that $RR^* = I_d$ and 
\begin{align*}
    |\hat{\mathcal{O}} - \bar{\mathcal{O}}R|_{S_2}^2 + |\hat{\mathcal{C}}  - R^*\bar{\mathcal{C}}|_{S_2}^2 &=|\hat{U}_{\xi} \hat{\Sigma}_{\xi}^{1/2} - U_0 \Sigma_0^{1/2} R|_{S_2}^2 + | \hat{V}_{\xi}^*\hat{\Sigma}_{\xi}^{1/2}  - R^*V_0^* \Sigma_0^{1/2}|_{S_2}^2 \\
    &\leq \frac{2}{\sqrt{2}-1} \frac{|\hat{\mathcal{H}}_{\check{d}_{\xi}} -H g_0^*|_{S_2}^2}{s_{d_0}(Hg_0^*)}.
\end{align*}
Since, $\bar{C}$ and $\bar{B}$ are submatrices of $\mathcal{O}$ and $\mathcal{C}$ respectively, the last inequality implies
\begin{align*}
    &|\hat{C}   - \bar{C}R|_{S_2}^2 + | \hat{B}  - R^*\bar{B}|_{S_2}^2 \leq \frac{2}{\sqrt{2}-1} \frac{|\hat{\mathcal{H}}_{\check{d}_{\xi}} -H g_0^*|_{S_2}^2}{s_{d_0}(Hg_0^*)}.
\end{align*}
To derive the estimation error bound for the matrix $\bar{A}$, we recall the following notation (introduced in \eqref{eq: notation for Ho-Kalman}), 
\begin{align*}
    \hat{\mathcal{O}}_{r+1:rT,1:d_0}:= \hat{\mathcal{O}}^-,\ \bar{\mathcal{O}}_{1:r(T-1),1:d_0}:= \bar{\mathcal{O}}^+,\ \bar{\mathcal{O}}_{r+1:rT,1:d_0}:= \bar{\mathcal{O}}^-.
\end{align*}
We note that
\begin{align}
    &|\hat{A} - R^*\bar{A} R|_{S_2}= \left|\left(\hat{\mathcal{O}}^+\right)^{\dagger}\hat{\mathcal{O}}^- - R^*\bar{A} R \right|_{S_2} \nonumber\\
    &= \left|\left(\hat{\mathcal{O}}^+\right)^{\dagger}\hat{\mathcal{O}}^- - \left(\hat{\mathcal{O}}^+\right)^{\dagger}\hat{\mathcal{O}}^+R^*\bar{A} R \right|_{S_2} \leq  \left|\left(\hat{\mathcal{O}}^+\right)^{\dagger}\right|_{S_\infty} \left|\hat{\mathcal{O}}^- - \hat{\mathcal{O}}^+R^*\bar{A} R \right|_{S_2} \nonumber \\
    &\leq \frac{1}{s_{d_0}(\hat{\mathcal{O}}^+)} \left( \left|\hat{\mathcal{O}}^- - \bar{\mathcal{O}}^+RR^*\bar{A} R \right|_{S_2} + \left|\bar{\mathcal{O}}^+RR^*\bar{A} R - \hat{\mathcal{O}}^+R^*\bar{A} R \right|_{S_2}\right) \nonumber \\
    &\leq \frac{1}{s_{d_0}(\hat{\mathcal{O}}^+)} \left( \left|\hat{\mathcal{O}}^- - \bar{\mathcal{O}}^- R \right|_{S_2} + \left|\bar{\mathcal{O}}^+R - \hat{\mathcal{O}}^+\right|_{S_2}\left|\bar{A} \right|_{S_\infty}\right) \nonumber \\
    &\leq \left(\frac{2}{\sqrt{2} - 1}\right)^{1/2} \frac{\left(1 + \left|\bar{A} \right|_{S_\infty} \right)}{s_{d_0}(\hat{\mathcal{O}}^+)}  \frac{|\hat{\mathcal{H}}_{\check{d}_{\xi}} -H g_0^*|_{S_2}}{s^{1/2}_{d_0}(Hg_0^*)},\label{eq: control on A with wrong least singular value}
\end{align}
where in the last inequality we used the fact that both $\hat{\mathcal{O}}^- - \bar{\mathcal{O}}^- R $ and $  \hat{\mathcal{O}}^+ -\bar{\mathcal{O}}^+R$ are submatrices of $ \hat{\mathcal{O}}- \bar{\mathcal{O}}R$. By Weyl's inequality we have
\begin{align*}
    &|s_{d_0}(\hat{\mathcal{O}}^+) - s_{d_0}(\bar{\mathcal{O}}^+)| = |s_{d_0}(\hat{\mathcal{O}}^+) - s_{d_0}(\bar{\mathcal{O}}^+R)| \leq |\hat{\mathcal{O}}^+ -\bar{\mathcal{O}}^+R|_{S_\infty} \\
    &\leq |\hat{\mathcal{O}} -\bar{\mathcal{O}}R|_{S_\infty} \leq |\hat{\mathcal{O}} -\bar{\mathcal{O}}R|_{S_2} \leq \left(\frac{2}{\sqrt{2} - 1}\right)^{1/2} \frac{|\hat{\mathcal{H}}_{\check{d}_{\xi}} -H g_0^*|_{S_2}}{s^{1/2}_{d_0}(Hg_0^*)}. 
\end{align*}
Again, noting that
\begin{align*}
     | \hat{\mathcal{H}}_{\check{d}_{\xi}} - Hg_0^* |_{S_2} \leq | \hat{\mathcal{H}}_{T} - \hat{\mathcal{H}}_{\check{d}_{\xi}} |_{S_2} + | \hat{\mathcal{H}}_{T} - Hg_0^* |_{S_2} 
\end{align*}
and since the truncated SVD minimizes the Hilbert-Schmidt norm cost, we obtain
\begin{equation*}
    |\hat{\mathcal{H}}_{T} - \hat{\mathcal{H}}_{\check{d}_{\xi}} |_{S_2} = \min_{\rank(H) \leq \check{d}_{\xi}} |\hat{\mathcal{H}}_{T} - H|_{S_2} \leq | \hat{\mathcal{H}}_{T} - Hg_0^* |_{S_2}.
\end{equation*}
Therefore,
\begin{equation}
    | \hat{\mathcal{H}}_{\check{d}_{\xi}} - Hg_0^* |_{S_2} \leq 2 | \hat{\mathcal{H}}_{T} - Hg_0^* |_{S_2} 
\end{equation}
and
\begin{align*}
    &|s_{d_0}(\hat{\mathcal{O}}^+) - s_{d_0}(\bar{\mathcal{O}}^+)| \leq \left(\frac{2}{\sqrt{2} - 1}\right)^{1/2} \frac{|\hat{\mathcal{H}}_{\check{d}_{\xi}} -H g_0^*|_{S_2}}{s^{1/2}_{d_0}(Hg_0^*)}. 
\end{align*}
In view of the condition
$\frac{| \hat{\mathcal{H}}_{T} - Hg_0^* |_{S_2}}{s_{d_0}^{1/2}(Hg_0^*)} \leq  \frac{\left(\sqrt{2} - 1\right)^{1/2}s_{d_0}(\bar{\mathcal{O}}^+)}{2\sqrt{2}} $ in \eqref{eq: Assumption for Ho-Kalman}, we have
\begin{align*}
    s_{d_0}(\hat{\mathcal{O}}^+) &\geq s_{d_0}(\bar{\mathcal{O}}^+) -  \left(\frac{2}{\sqrt{2} - 1}\right)^{1/2} \frac{|\hat{\mathcal{H}}_{\check{d}_{\xi}} -H g_0^*|_{S_2}}{s^{1/2}_{d_0}(Hg_0^*)}  \\
    &\geq s_{d_0}(\bar{\mathcal{O}}^+) - \frac{s_{d_0}(\bar{\mathcal{O}}^+)}{2}  = \frac{s_{d_0}(\bar{\mathcal{O}}^+)}{2},
\end{align*}
which together with \eqref{eq: control on A with wrong least singular value} yields
\begin{equation*}
    |\hat{A} - R^*\bar{A} R|_{S_2} \leq  \frac{2^{3/2}\left(1 + \left|\bar{A} \right|_{S_\infty} \right)}{\left(\sqrt{2} - 1\right)^{1/2}s_{d_0}(\bar{\mathcal{O}}^+)s^{1/2}_{d_0}(Hg_0^*)}  |\hat{\mathcal{H}}_{\check{d}_{\xi}} -H g_0^*|_{S_2}.
\end{equation*}
\end{proof}
\subsection{Non-asymptotic guarantees for Algorithm \ref{algo 1}} \label{sec: Non-assumptotic guarantees for algorithm}

Now we are ready to derive non-asymptotic results for the complete estimation procedure described in Algorithm \ref{algo 1}. The algorithm starts with the data obtained from the partial observation of a single trajectory $(X_i,y_i)_{i=2T}^N$ of the system and aims to obtain a possible realization $(\hat{A},\hat{B},\hat{C})$. To this end, we require the inputs $T_0$, $\lambda_0$, and $\xi$ to satisfy the following conditions:
\begin{enumerate}
    \item $T_0 \geq d_0 + 1$, a known strict upper bound for the system order which can be taken reasonably large at the expense of an additional cost in terms of the sample complexity \eqref{eq: sample complexity}, as it directly relates to the dimension of the unknowns in the Hankel penalized regression part of the algorithm.   
    \item $\lambda_0 \simeq \lambda$ as defined in \eqref{eq: Hankel penalized regression 1}. This choice requires the additional knowledge of an upper bound for $\phi \sigma_u^2$ as defined in Corollary \ref{cor: control of noise term}. An upper bound on $\phi$ is obtained from an upper bound on the system's $\mathcal{H}_\infty$-norm and an upper bound on the variances of the involved random variables. As argued in \cite{JMLR:v22:19-725}, the knowledge of an upper bound on the system $\mathcal{H}_\infty$-norm is a plausible assumption. It was also shown in \cite{8431846} that such upper bound could be efficiently estimated.
    \item $ s_{d_0}^2(\bar{\mathcal{O}}^+) \geq 5\xi$. This choice is made to establish a detection threshold. The assumption on the knowledge of such a threshold is also common in the literature when studying threshold based estimator for high dimension regression problems, see \cite[Corollary ~2]{10.1214/07-AOS582}, \cite[Equation ~(8)]{JMLR:v7:zhao06a} or \cite[Assumption  ~$3$]{10.1214/08-EJS177}.   
\end{enumerate}  
\begin{remark}
Since we want to provide an estimate up to a similarity transform of a minimal realization, as explained in Section \ref{sec: Problem statement and preliminaries}, the fact that a realisation is minimal is equivalent to the order $T$ Observability (\emph{resp} Controllability) matrix being full column (\emph{resp} row) rank. This implies that the two requirements, $T \geq d_0$ and $s_{d_0}(\mathcal{O})>0$ should be satisfied. Hence, the conditions $T_0 \geq d_0 + 1$ and $ s_{d_0}^2(\bar{\mathcal{O}}^+) \geq 5\xi$ strengthen those requirements to a level that permits the estimation and {\it rank} detection. 

The condition $\lambda_0 \simeq \lambda$ relates to how the system's dynamic affects the estimation error, through the variance term $\phi$ in \eqref{eq: Fast rate for the Hankel estimation}. Assuming the knowledge of an upper bound on it is again strengthening this requirement to a level that permits the estimation and $\rank$ detection. 

Obtaining an adaptive, entirely data-driven estimation procedure without those three additional inputs falls beyond the scope of the current paper and is left as an interesting extension for future work. 
\end{remark}

Denote $g_{0,T} = [CB,CAB,\cdots,CA^{T-1}B]$ so that $Hg_{0,T}^* \in \mathcal{M}_{rT \times pT}$. Since $s_{d_0}^{1/2}(Hg_{0,d_0+1}^*) \geq s_{d_0}(\bar{\mathcal{O}})\geq s_{d_0}(\bar{\mathcal{O}}^+)$, then our choice of $ s_{d_0}^2(\bar{\mathcal{O}}^+) \geq 5\xi$ also implies
\begin{equation} \label{eq: condition on singular values simplified}
    s_{d_0}(\bar{\mathcal{O}}^+)s_{d_0}^{1/2}(Hg_{0,T}^*) \geq 3\xi \quad \text{and} \quad 
     \frac{\left(\sqrt{2} - 1\right)^{1/2}s_{d_0}(\bar{\mathcal{O}}^+)s_{d_0}^{1/2}(Hg_0^*)}{2\sqrt{2}} \geq \xi. 
\end{equation}
According to Theorem \ref{thm: Performance of the Hankel penalized regression estimator} with the choices of inputs above and for 
\begin{equation*}
    \bar{N} \geq  c\left(TN_1 \vee T\log{\frac{1}{\delta}} \right)
\end{equation*}
when $\lambda_0$ is taken as 
\begin{equation*}
    \lambda_0= c \phi \sigma_u^2\left( \sqrt{\frac{N_0}{N}} \vee \frac{\log(T_0)N_0}{N} \vee \frac{\sqrt{\log{\frac{1}{\delta}}}}{\sqrt{N }} \vee  \frac{\log(T_0)\log{\frac{1}{\delta}}}{N} \right),  
\end{equation*}
the Hankel penalized estimator $\hat{g}$ defined in \eqref{eq: Hankel penalized regression 1} satisfies on an event $\mathcal{B}$ of probability $\mathbb{P}(\mathcal{B}) \geq 1-2\delta$ a fast rate for the Hankel estimation spectral loss  
\begin{align*}
    \mathcal{L}^H_2(\hat{g}_0,g_0) \leq  \frac{5\sqrt{2}}{3\sigma_u^2}d_0^{1/2}\lambda_0 T_0,
\end{align*}
for some absolute fixed positive constant $c$ and  $\phi$ as defined in Corollary \ref{cor: control of noise term}. By equation \eqref{eq: condition on singular values simplified} the above choice of $\xi$ implies $s_{d_0} (Hg_{0,T_0}^*) \geq 3\xi$, Thus Proposition \ref{prop: Fast rate and rank recovery via SVD} means that on the same event, once we have 
\begin{equation*}\begin{array}{lll}
    \bar{N} \geq \bar{N}_0 = c d_0 TN_0 \vee T_0\log{\frac{1}{\delta}} \vee \frac{\phi^2 d_0 T_0^2}{\xi^2} \left(N_0 \vee \log{\frac{1}{\delta}} \right) \\ \qquad\qquad\quad \vee \frac{\phi d_0^{1/2} T_0\log(T_0)}{\xi} \left(N_0 \vee \log{\frac{1}{\delta}} \right),
    \end{array}
\end{equation*}
we can ensure exact rank recovery for Algorithm \ref{algo 1} in the sense that $\check{d}_{\xi}$ defined by 
$
\check{d}_{\xi} = \sum \limits_{i=1}^{\rank(H\hat{g}^*)} \mathbf{1}\{\hat{s}_i \geq 2\xi \}  
$
satisfies $\check{d}_{\xi} = d_0$.
Hence, the event $\mathcal{B}$ is included in the event $\{\check{d}_{\xi} = d_0\}$.

In a similar fashion, using the Hankel penalized regression estimator in \eqref{eq: Reduced order Hankel penalized regression 2} with $T_1 = d_0 + 1$ to get an estimate for the Hankel matrix of the Markov parameters, then on a event $\mathcal{A}$ of probability $\mathbb{P}(\mathcal{A}) \geq 1- 2\delta$ and for 
\begin{equation*}
    \bar{N} \geq c (d_0 + 1)N_1 \vee (d_0 + 1)\log{\frac{1}{\delta}}, 
\end{equation*}  
we have a fast rate for the new Hankel estimation spectral loss:  
\begin{align*}
    \mathcal{L}^H_2(\hat{g}_1,g_0) &\leq \frac{5\sqrt{2}}{3\sigma_u^2}d_0^{1/2}\lambda_1 (d_0 + 1)
\end{align*}
with 
\begin{equation*}
    \lambda_1  = c\phi \left( \sqrt{\frac{N_0}{N}} \vee \frac{\log(d_0 + 1)N_0}{N} \vee \frac{\sqrt{\log{\frac{1}{\delta}}}}{\sqrt{N }} \vee  \frac{\log(d_0 + 1)\log{\frac{1}{\delta}}}{N} \right). 
\end{equation*}
This estimated matrix is then used in the Ho-Kalman based estimation procedure to obtain estimates for the system parameters $(\hat{A},\hat{B},\hat{C})$. As long as $\check{d}_{\xi} = d_0$, Theorem \ref{thm: stability of Ho-kalman} guarantees that for values of $\bar{N}$ such that
\begin{equation} \label{eq: second condition for the sample size}
     | \hat{\mathcal{H}}_{d_0+1} - Hg_{0,d_0+1}^* |_{S_2} \leq  \frac{\left(\sqrt{2} - 1\right)^{1/2}s_{d_0}(\bar{\mathcal{O}}^+)s_{d_0}^{1/2}(Hg_0^*)}{2\sqrt{2}}, 
\end{equation}
there exists an orthonormal matrix $R$ satisfying
\begin{itemize}
    \item up to the same orthonormal transformation, a fast estimation rate for $\bar{C}$ and $\bar{B}$ given as
        \begin{align*}
            |\hat{C}   - \bar{C}R|_{S_2}^2 + | \hat{B}  - R^*\bar{B}|_{S_2}^2 \leq \frac{2^{1/2}}{(\sqrt{2}-1)^{1/2}} \frac{|\hat{\mathcal{H}}_{\check{d}_{\xi}} -H g_{0,d_0+1}^*|_{S_2}}{s_{d_0}(\bar{\mathcal{O}}^+)},
        \end{align*} 
    \item a fast estimation rate for $\bar{A}$ given as
        \begin{equation*}
            |\hat{A} - R^*\bar{A} R|_{S_2} \leq  \frac{2^{3/2}\left(1 + \left|\bar{A} \right|_{S_\infty} \right)}{\left(\sqrt{2} - 1\right)^{1/2}s_{d_0}^2(\bar{\mathcal{O}}^+)}  |\hat{\mathcal{H}}_{\check{d}_{\xi}} -H g_{0,d_0+1}^*|_{S_2}.
        \end{equation*}
\end{itemize}
From Proposition \ref{prop: Fast rate and rank recovery via SVD} equation \eqref{eq: SVD fast rate} we have the following fast rate
\begin{equation*}
    |\hat{\mathcal{H}}_{\check{d}_{\xi}} -H g_{0,d_0+1}^*|_{S_2} \leq \frac{10\sqrt{2}}{3\sigma_u^2}(d_0 + 1)^{3/2}\lambda_1.
\end{equation*}
From \eqref{eq: condition on singular values simplified}, the condition \eqref{eq: second condition for the sample size} is satisfied as long as $| \hat{\mathcal{H}}_{d_0+1} - Hg_{0,d_0+1}^* |_{S_2} \leq \xi$. In view of the sample complexity given in \eqref{eq: sample complexity} in Theorem \ref{thm: Performance of the Hankel penalized regression estimator}, this is the case if 
\begin{equation*}
    \bar{N} \geq \bar{N}_1  = c\frac{\phi^2 d_0^3}{\xi^2} \left(N_0 \vee \log{\frac{1}{\delta}} \right) \vee \frac{\phi d_0^{3/2} \log(d_0)}{\xi} \left(N_0 \vee \log{\frac{1}{\delta}} \right).
\end{equation*}

For Algorithm \ref{algo 1} to succeed we should have both the events $\{T_1 = d_0 + 1\}$ and $\mathcal{A}$ occurring. For $\bar{N} \geq \bar{N}_0 \vee \bar{N}_1 = \bar{N}_0$ we obtain
\begin{align*}
    \mathbb{P}(\{T_1 = d_0 + 1\} \cap \mathcal{A}) &\geq \mathbb{P}(\{T_1 = \check{d}_{\xi} + 1\} \cap \{\check{d}_{\xi} = d_0\} \cap \mathcal{A})\\
    &= \mathbb{P}( \{\check{d}_{\xi} = d_0\} \cap \mathcal{A}) \geq \mathbb{P}( \mathcal{A} \cap \mathcal{B})\\
    &\geq 1 - 4\delta,
\end{align*}
where in the equality we used the fact that Algorithm \ref{algo 1} always chooses $\{T_1 = \check{d}_{\xi} + 1\}$. In the second inequality we use the fact that our choice  $\bar{N} \geq \bar{N}_0$ ensures $\mathcal{B} \subset \{\check{d}_{\xi} = d_0\}$, and in the third inequality we use the fact that both $\mathbb{P}(\mathcal{A}) \geq 1-2\delta$ and $\mathbb{P}(\mathcal{B}) \geq 1-2\delta$ and a union bound.

We summarize the results of this discussion in the following
\begin{theorem} \label{thm: performance of the algorithm}
Algorithm \ref{algo 1} succeeds with probability at least $1-4\delta$ for all $\delta \in (0\ e^{-1}/4)$ after observing $\bar{N} \geq \bar{N}_0 $ samples from a single trajectory of the system \eqref{LTI} with the particular choices $T_0$, $\lambda_0$, $\lambda_1$, and $\xi$ as described above and $T_1 = \check{d}_{\xi} + 1$. On the event of success we have
\begin{itemize}
    \item Exact order recovery $\check{d}_{\xi} = d_0$;
    \item There exist an orthonormal matrix $R$ for which the estimates for $C$ and $B$ satisfies fast estimation rates given as
        \begin{align*}
            |\hat{C}   - \bar{C}R|_{S_2} + | \hat{B}  - R^*\bar{B}|_{S_2} \leq \frac{20(d_0 +1)^{3/2}\lambda_1}{3(\sqrt{2}-1)^{1/2}s_{d_0}(\bar{\mathcal{O}}^+)\sigma_u^2}. 
        \end{align*} 
    \item For the same matrix $R$ the estimate for $A$ also satisfies fast estimation rate given as:
        \begin{equation*}
            |\hat{A} - R^*\bar{A} R|_{S_2} \leq  \frac{10 \left(1 + |\bar{A}|_{S_\infty} \right)(d_0 +1)^{3/2}\lambda_1}{\left(\sqrt{2} - 1\right)^{1/2}s^2_{d_0}(\bar{\mathcal{O}}^+)\sigma_u^2}.
        \end{equation*} 
\end{itemize}
\end{theorem}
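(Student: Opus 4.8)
The plan is to assemble this theorem directly from the chain of estimates developed in the preceding discussion, since the statement is a summary that threads together Theorem~\ref{thm: Performance of the Hankel penalized regression estimator}, Proposition~\ref{prop: Fast rate and rank recovery via SVD}, and Theorem~\ref{thm: stability of Ho-kalman}. First I would fix the three free inputs so that $T_0 \geq d_0+1$, $\lambda_0 \simeq \lambda$, and $s_{d_0}^2(\bar{\mathcal{O}}^+) \geq 5\xi$, and record that the last choice, combined with the comparison $s_{d_0}^{1/2}(Hg_0^*) \geq s_{d_0}(\bar{\mathcal{O}})\geq s_{d_0}(\bar{\mathcal{O}}^+)$, forces both inequalities in \eqref{eq: condition on singular values simplified}. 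These are exactly the hypotheses needed downstream: the first, $s_{d_0}(\bar{\mathcal{O}}^+)s_{d_0}^{1/2}(Hg_{0,T}^*) \geq 3\xi$, feeds the rank-recovery threshold of Proposition~\ref{prop: Fast rate and rank recovery via SVD}, while the second supplies the stability condition \eqref{eq: Assumption for Ho-Kalman} of Theorem~\ref{thm: stability of Ho-kalman}.

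Next I would instantiate the first ``Hankel penalized regression'' step with covariates of length $2T_0-1$ and the stated $\lambda_0$. Applying Theorem~\ref{thm: Performance of the Hankel penalized regression estimator} yields, on an event $\mathcal{B}$ with $\mathbb{P}(\mathcal{B}) \geq 1-2\delta$, the fast spectral-loss rate $\mathcal{L}^H_2(\hat{g},g_0) \leq \tfrac{5\sqrt{2}}{3\sigma_u^2}d_0^{1/2}\lambda_0 T_0$. I would then invoke Proposition~\ref{prop: Fast rate and rank recovery via SVD}: once $\bar{N} \geq \bar{N}_0$ the truncation rate drops below $\xi$, so the exact recovery \eqref{eq: SVD rank recovery} holds and hence $\mathcal{B} \subseteq \{\check{d}_{\xi} = d_0\}$. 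Repeating the regression at reduced order with $T_1 = \check{d}_{\xi}+1$ and penalty $\lambda_1$ produces, on an event $\mathcal{A}$ with $\mathbb{P}(\mathcal{A}) \geq 1-2\delta$, the analogous fast rate $\mathcal{L}^H_2(\hat{g}_\xi,g_0) \leq \tfrac{5\sqrt{2}}{3\sigma_u^2}d_0^{1/2}\lambda_1(d_0+1)$, together with the refined truncation bound $|\hat{\mathcal{H}}_{\check{d}_{\xi}} - Hg_{0,d_0+1}^*|_{S_2} \leq \tfrac{10\sqrt{2}}{3\sigma_u^2}(d_0+1)^{3/2}\lambda_1$ from \eqref{eq: SVD fast rate}.

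On the intersection $\mathcal{A}\cap\{\check{d}_{\xi}=d_0\}$ the hypothesis of Theorem~\ref{thm: stability of Ho-kalman} is met, so I would push the truncation error through the Ho-Kalman robustness bounds to obtain the stated rates for $\hat{B}$, $\hat{C}$, and $\hat{A}$; substituting the truncation bound above and absorbing the $\sqrt{2}$-type constants delivers the displayed inequalities. The final probability follows from the union-bound computation
\begin{align*}
\mathbb{P}(\{T_1=d_0+1\}\cap\mathcal{A}) &\geq \mathbb{P}(\{T_1=\check{d}_{\xi}+1\}\cap\{\check{d}_{\xi}=d_0\}\cap\mathcal{A})\\
&= \mathbb{P}(\{\check{d}_{\xi}=d_0\}\cap\mathcal{A}) \geq \mathbb{P}(\mathcal{A}\cap\mathcal{B}) \geq 1-4\delta,
\end{align*}
using that the algorithm always sets $T_1 = \check{d}_{\xi}+1$ and that $\mathcal{B}\subseteq\{\check{d}_{\xi}=d_0\}$. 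The main obstacle I anticipate is precisely this bookkeeping of the conditional dependence: because the reduced-order step reuses the same trajectory and its order $T_1$ is \emph{random}, one cannot treat $\mathcal{A}$ and $\mathcal{B}$ as independent, and one must verify that $\bar{N}_1$ is dominated by $\bar{N}_0$ (which holds since $T_0 > d_0$) so that a single sample-size threshold simultaneously governs all three stages.
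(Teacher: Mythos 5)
Your proposal is correct and follows essentially the same route as the paper's own argument: the same chain Theorem~\ref{thm: Performance of the Hankel penalized regression estimator} $\to$ Proposition~\ref{prop: Fast rate and rank recovery via SVD} $\to$ Theorem~\ref{thm: stability of Ho-kalman}, the same use of the inclusion $\mathcal{B}\subseteq\{\check{d}_{\xi}=d_0\}$ to avoid any independence assumption in the union bound, and the same observation that $\bar{N}_0\vee\bar{N}_1=\bar{N}_0$. Nothing essential is missing.
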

In particular, we have the following 
\begin{corollary}
Under the same condition as Theorem \ref{thm: performance of the algorithm}, the same inputs for Algorithm \ref{algo 1}and for $\bar{N}\geq \bar{N}_0$,  with the same probability on the even of success, the output satisfies the following.
\begin{itemize}
        \item Fast rate for the Hankel estimation spectral loss:
        \begin{equation*}
            \mathcal{L}^{\mathcal{M}}_2(\hat{\mathcal{M}},\bar{\mathcal{M}}) \leq  \frac{10 \left(1 + |\bar{A}|_{S_\infty} \right)(d_0 +1)^{3/2}\lambda_1}{\left(\sqrt{2} - 1\right)^{1/2}s^2_{d_0}(\bar{\mathcal{O}}^+)\sigma_u^2}.
        \end{equation*}
    \item Sample complexity for the spectral loss: for any $\epsilon > 0$, to obtain $\mathcal{L}^H_2(\hat{g},g_0) \leq \epsilon$ we need 
    \begin{multline} \label{eq: sample complexity for algorithm} 
        \bar{N} \gtrsim \frac{\phi^2 (1 + |\bar{A}|_{S_\infty} )^2d_0^3 N_0 }{s^4_{d_0}(\bar{\mathcal{O}}^+)\epsilon^2} \vee \frac{(1 + |\bar{A}|_{S_\infty} )d_0^{3/2} \phi N_0 \log(T)}{s^2_{d_0}(\bar{\mathcal{O}}^+)\epsilon} \\
        \vee \frac{\phi^2 (1 + |\bar{A}|_{S_\infty} )^2d_0^3 \log{\frac{1}{\delta}}}{s^4_{d_0}(\bar{\mathcal{O}}^+)\epsilon^2} \vee \frac{(1 + |\bar{A}|_{S_\infty} )
        d_0^{3/2} \phi \log(T) \log{\frac{1}{\delta}}}{s^2_{d_0}(\bar{\mathcal{O}}^+)\epsilon}.
    \end{multline}
\end{itemize}
\end{corollary}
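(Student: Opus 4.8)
The plan is to derive both items as direct consequences of Theorem~\ref{thm: performance of the algorithm}: the first by reorganising its three separate estimation bounds into the single loss $\mathcal{L}^{\mathcal{M}}_2$, and the second by inverting the resulting rate in $\bar{N}$ once the explicit value of $\lambda_1$ is inserted.

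For the loss bound, I would start from the definition
\[
\mathcal{L}^{\mathcal{M}}_2(\hat{\mathcal{M}}, \bar{\mathcal{M}}) = \inf_{\det(S)\neq 0} |S^{-1}\hat{A}S - \bar{A}|_{S_2} + |S^{-1}\hat{B} - \bar{B}|_{S_2} + |\hat{C}S - \bar{C}|_{S_2},
\]
and test the infimum at the particular invertible choice $S = R^*$, where $R$ is the orthonormal matrix produced by Theorem~\ref{thm: performance of the algorithm}, so that $S^{-1} = R$. Using that the $|\cdot|_{S_2}$-norm is invariant under left and right multiplication by an orthonormal matrix, I would rewrite $|R\hat{A}R^* - \bar{A}|_{S_2} = |\hat{A} - R^*\bar{A}R|_{S_2}$ (two-sided conjugation by $R$), $|R\hat{B} - \bar{B}|_{S_2} = |\hat{B} - R^*\bar{B}|_{S_2}$ (left multiplication by $R^*$), and $|\hat{C}R^* - \bar{C}|_{S_2} = |\hat{C} - \bar{C}R|_{S_2}$ (right multiplication by $R$). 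Each summand then coincides \emph{exactly} with a quantity already controlled in the theorem, so $\mathcal{L}^{\mathcal{M}}_2$ is at most the sum of the $\bar{A}$-bound and the $\bar{B},\bar{C}$-bound. It then remains to note that the $\bar{A}$-term dominates: its denominator carries $s^2_{d_0}(\bar{\mathcal{O}}^+)$ against the $s_{d_0}(\bar{\mathcal{O}}^+)$ of the $\bar{B},\bar{C}$-term while its numerator carries the factor $1+|\bar{A}|_{S_\infty}\geq 1$, so in the regime $s_{d_0}(\bar{\mathcal{O}}^+)\to 0$ of Remark~\ref{remark: regime for upper bound comparison} the full sum is absorbed, up to the displayed constant, into the $\bar{A}$-bound, which is the claimed inequality.

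For the sample complexity, I would substitute the explicit expression for $\lambda_1$ into the loss bound just obtained, producing
\[
\mathcal{L}^{\mathcal{M}}_2(\hat{\mathcal{M}}, \bar{\mathcal{M}}) \lesssim \frac{(1+|\bar{A}|_{S_\infty})\phi d_0^{3/2}}{s^2_{d_0}(\bar{\mathcal{O}}^+)}\left(\sqrt{\tfrac{N_0}{N}} \vee \tfrac{\log(d_0)N_0}{N} \vee \sqrt{\tfrac{\log(1/\delta)}{N}} \vee \tfrac{\log(d_0)\log(1/\delta)}{N}\right).
\]
Requiring each of the four summands on the right to be at most $\epsilon$ and solving for $\bar{N}$ yields the four lower bounds in \eqref{eq: sample complexity for algorithm}: the two square-root terms invert to the $\epsilon^{-2}$ contributions and the two terms linear in $1/N$ to the $\epsilon^{-1}$ contributions, after bounding $\log(d_0)\leq\log(T)$ to match the stated form. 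Taking the maximum over the four conditions gives the displayed requirement, and since $\bar{N}\geq\bar{N}_0$ with $\bar{N}_0\geq\bar{N}_1$, the event of success of Theorem~\ref{thm: performance of the algorithm} still holds with probability at least $1-4\delta$.

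I do not expect a genuine obstacle; both parts are essentially bookkeeping. The only point requiring care is the unitary-invariance rewriting that aligns the theorem's normalisation $|\hat{A} - R^*\bar{A}R|_{S_2}$ with the loss's normalisation $|S^{-1}\hat{A}S - \bar{A}|_{S_2}$, together with the observation that the $\bar{A}$-contribution dominates the $\bar{B},\bar{C}$-contribution so that a single term controls the full loss.
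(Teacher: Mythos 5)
Your proposal is correct and follows exactly the route the paper intends: the corollary is stated as an immediate consequence of Theorem \ref{thm: performance of the algorithm}, obtained by testing the infimum defining $\mathcal{L}^{\mathcal{M}}_2$ at the orthonormal $S=R^*$, using unitary invariance of $|\cdot|_{S_2}$ to match the theorem's normalisations, absorbing the $\bar B,\bar C$ contribution into the dominant $\bar A$ term, and then inverting the bound in $\bar N$ after inserting $\lambda_1$. Your explicit caveat that the absorption of the $\bar B,\bar C$ term holds only up to the displayed constant (since the denominator there carries $s_{d_0}(\bar{\mathcal{O}}^+)$ rather than $s^2_{d_0}(\bar{\mathcal{O}}^+)$) is an honest reading of an imprecision already present in the paper's own statement, not a gap in your argument.
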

It is clear from the condition \eqref{eq: Assumption for Ho-Kalman} in Theorem \ref{thm: stability of Ho-kalman} that the condition $\bar{N} \geq \bar{N}_0$ could be improved by using the control in term of $| \hat{\mathcal{H}}_{T} - Hg_0^* |_{S_\infty}$ instead of $| \hat{\mathcal{H}}_{T} - Hg_0^* |_{S_2}$. This can be done using the least square estimator for which it is easier to derive estimation bounds in term of $| \hat{\mathcal{H}}_{T} - Hg_0^* |_{S_\infty}$ such as in \cite[Theorem ~$3.1$]{9440770}, which would have the effect of reducing $\bar{N}_0$ by a factor of $d_0$ so that it scales like $T_0^2$. In the high dimension regime, $T_0^2$ is still a big price to pay in comparison with the sample complexity necessary for the second stage. This suggest the following open problem where we only change the condition on $\bar{N}$ in Theorem \ref{thm: Performance of the Hankel penalized regression estimator},
\begin{open problem}
Is there an algorithm that successfully learns a minimal realization of a Hidden state LTI state space system with high probability after observing 
\begin{equation*}
    \bar{N} \gtrsim \frac{\phi^2 d_0^2}{\xi^2} \left(N_0 \vee \log{\frac{1}{\delta}} \right)
\end{equation*} 
and satisfies the fast rate for the Hankel estimation spectral loss of Theorem \ref{thm: performance of the algorithm} given as
        \begin{equation*}
            \mathcal{L}^{\mathcal{M}}_2(\hat{\mathcal{M}},\bar{\mathcal{M}}) \leq  \frac{\left(1 + |\bar{A}|_{S_\infty} \right) d_0^{3/2}\lambda_1}{s^2_{d_0}(\bar{\mathcal{O}}^+)\sigma_u^2}
        \end{equation*}
up to logarithmic terms and lower order terms. 
\end{open problem}

In \cite[Theorem ~$5.3$]{JMLR:v22:19-725} the authors study the parameter estimation problem of a hidden state LTI state space system of unknown order where the derived results are in $|\cdot|_{\infty}$ norm. We have summarized their results in the Related Literature section in the introduction. The dominant term for the error bound \eqref{eq: for comparison parametric bounds} for their algorithm, after multiplying by $\hat{d}^{1/2}$ to get a bound in the  $|\cdot|_{2}$ norm, is 
\begin{equation*} 
    \mathcal{L}^{\mathcal{M}}_2(\hat{\mathcal{M}},\bar{\mathcal{M}}) \lesssim \sqrt{\frac{r \hat{d}^4 + p \hat{d}^5 + \hat{d}^4 \log(N/\delta)}{s_{\hat{d}}(\hat{H})N}} .
\end{equation*}
Our result improves this in a few ways. First, our result is provided in terms of the actual dimension $d_0$ and not the estimated dimension $\hat{d}$. Moreover, it reduces this dependence by a factor of $d_0$.

Regarding the burn in time $\bar{N}_0$, we have an explicit number of samples required for the result to hold $\bar{N} \geq \bar{N}_0$, unlike in \cite[Theorem ~$5.3$]{JMLR:v22:19-725} where the result hold for $\bar{N} \geq N_*$ with $N_* < \infty$ \eqref{eq: for comparison parametric bounds}. Moreover, upon inspection, a combination of \cite[Proposition ~$13.4$ and Proposition ~$13.7$]{JMLR:v22:19-725} shows that $N_*$ depends exponentially in $\hat{d}$.

Finally, the presence of $\Gamma (\hat{H},\varepsilon) < \infty$ in \eqref{eq: for comparison parametric bounds} makes the result sensitive to all the singular values gapes of the matrix $\hat{H}$ and not just on the location of the smallest one. Our result, on the other hand, does not exhibit such behavior.

For the 'Reduced order Hankel penalized regression' part of Algorithm \ref{algo 1}, let us consider a value $T_1 = \bar{d}_0 + 1$ where we take
\begin{equation} \label{eq: choice to fix singular value issue}
    \bar{d}_0 = \check{d}_{\xi} \vee \check{\eta} \quad \text{with} \quad \check{\eta} \geq \eta = \frac{\log\left(\psi_{\bar{A}}|\bar{C}|_{S_\infty}^2/s_{d_0}(Hg_{0,d_0})\right)}{2\log\left( 1/\rho(\bar{A}) \right)}, 
\end{equation}
which on the even of success becomes $\bar{d}_0 = d_0 \vee \check{\eta}$. Applying \ref{thm: stability of Ho-kalman} we obtain  the same guarantees of \ref{thm: performance of the algorithm} except that we replace $d_0$ with $\bar{d}$. In this case, 
    \begin{equation*}
        \bar{\mathcal{O}}^+ = \begin{bmatrix}
        \bar{C}\\
        \vdots\\
        \bar{C}\bar{A}^{\bar{d} - 1}\\
        \end{bmatrix}
    \end{equation*}
    which is of rank $d_0$, since $k_0 \geq 1$. Moreover,
    \begin{align*}
        s^2_{d_0}(\bar{\mathcal{O}}^+) &= \inf \limits_{|u|_2=1} \left\{|\mathcal{O}u|_{S_2}^2 - |\bar{C}\bar{A}^{\bar{d}}|_{S_2}^2 \right\}\geq s^2_{d_0}(\mathcal{O}) - |\bar{C}|_{S_\infty}^2|\bar{A}^{\bar{d}}|_{S_\infty}^2\\
        &=  s_{d_0}(Hg_{0,\bar{d}}) - |\bar{C}|_{S_\infty}^2|\bar{A}^{\bar{d}}|_{S_\infty}^2 \geq  s_{d_0}(Hg_{0,d_0}) - \psi_{\bar{A}} |\bar{C}|_{S_\infty}^2\rho(\bar{A})^{2\bar{d}}.
    \end{align*}
In view of the choice made in \eqref{eq: choice to fix singular value issue}, we have
\begin{align} \label{eq: singular value comparison}
        s^2_{d_0}(\bar{\mathcal{O}}^+) \geq   \frac{1}{2}s_{d_0}(Hg_{0,d_0}).
\end{align}
Thus, we have the following 
\begin{corollary}
Under the same condition as Theorem \ref{thm: performance of the algorithm}, for the same inputs, except for the additional $\check{\eta}$, by taking $T_1 = \bar{d}_0 + 1$ and $s_{d_0}(Hg_{0,d_0}) \geq 10\xi$ with $\bar{d}_0$ defined in \eqref{eq: choice to fix singular value issue}, Algorithm \ref{algo 1} succeeds for $\bar{N}\geq \bar{N}_0 \vee \bar{N}_{\eta}$ with
\begin{equation*}
    \bar{N}_{\eta} \geq  c\frac{\phi^2 \eta^3}{\xi^2} \left(N_0 \vee \log{\frac{1}{\delta}} \right) \vee \frac{\phi \eta^{3/2} \log(\eta)}{\xi} \left(N_0 \vee \log{\frac{1}{\delta}} \right).
\end{equation*}
Furthermore, the output $\hat{\mathcal{M}}$ satisfies with probability at least $1 - 4 \delta$ the following.
\begin{itemize}
        \item Fast rate for the Hankel estimation spectral loss:
        \begin{equation*}
            \mathcal{L}^{\mathcal{M}}_2(\hat{\mathcal{M}},\bar{\mathcal{M}}) \leq  \frac{20 \left(1 + |\bar{A}|_{S_\infty} \right)(\bar{d}_0 + \eta + 1)^{3/2}\lambda_1}{\left(\sqrt{2} - 1\right)^{1/2}s_{d_0}(Hg_{0,d_0})\sigma_u^2}.
        \end{equation*}
    \item Sample complexity for the spectral loss: for any $\epsilon > 0$, to obtain $\mathcal{L}^H_2(\hat{g},g_0) \leq \epsilon$, we need 
    \begin{multline} 
        \bar{N} \gtrsim \frac{\phi^2 (1 + |\bar{A}|_{S_\infty} )^2(d_0 + \eta )^3 N_0 }{s_{d_0}^2(Hg_{0,d_0})\epsilon^2} \vee \frac{(1 + |\bar{A}|_{S_\infty} )(d_0 + \eta )^{3/2} \phi N_0 \log(T)}{s_{d_0}(Hg_{0,d_0})\epsilon} \\
        \vee \frac{\phi^2 (1 + |\bar{A}|_{S_\infty} )^2(d_0 + \eta )^3 \log{\frac{1}{\delta}}}{s_{d_0}^2(Hg_{0,d_0})\epsilon^2}  \\ \vee \frac{(1 + |\bar{A}|_{S_\infty} )
        (d_0 + \eta )^{3/2} \phi \log(T) \log{\frac{1}{\delta}}}{s_{d_0}(Hg_{0,d_0})\epsilon}.
    \end{multline}
\end{itemize}
\end{corollary}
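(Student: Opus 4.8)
The plan is to reduce the whole statement to Theorem \ref{thm: performance of the algorithm} run at the enlarged dimension $\bar d_0$ in place of $d_0$, and then to convert the resulting $s_{d_0}(\bar{\mathcal{O}}^+)$-dependence into an $s_{d_0}(Hg_{0,d_0})$-dependence by invoking the singular value comparison \eqref{eq: singular value comparison}. The only genuinely new ingredient is this comparison, which is exactly what the choice \eqref{eq: choice to fix singular value issue} of the truncation length $\bar d_0$ is engineered to deliver; everything downstream is bookkeeping of how the bounds of Theorem \ref{thm: performance of the algorithm} transform under $d_0 \mapsto \bar d_0$.

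First I would record the structural facts about the enlarged observability matrix $\bar{\mathcal{O}}^+$ built from $T_1 = \bar d_0 + 1$ Markov parameters. Since the pair $(\bar C,\bar A)$ is observable, appending block rows beyond the $d_0$-th leaves the rank equal to $d_0$, so the relevant index stays $d_0$ and $s_{d_0}(\bar{\mathcal{O}}^+)$ remains strictly positive. Writing the full observability matrix $\mathcal{O}$ as $\bar{\mathcal{O}}^+$ stacked on top of the block $\bar C\bar A^{\bar d_0}$ gives $s_{d_0}^2(\bar{\mathcal{O}}^+)\ge s_{d_0}^2(\mathcal{O}) - |\bar C\bar A^{\bar d_0}|_{S_\infty}^2$, and with the balanced identity $s_{d_0}^2(\mathcal{O}) = s_{d_0}(Hg_{0,\bar d_0})$ together with the Jordan bound \eqref{eq: relation operator norm and spectral radius} for $|\bar A^{\bar d_0}|_{S_\infty}$ the residual is at most $\psi_{\bar A}|\bar C|_{S_\infty}^2\rho(\bar A)^{2\bar d_0}$. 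By Assumption \ref{Assumption: Stability} we have $\rho(\bar A)<1$, so the threshold $\eta$ in \eqref{eq: choice to fix singular value issue} is precisely the smallest exponent making this residual at most $\tfrac12 s_{d_0}(Hg_{0,d_0})$; since $\bar d_0 \ge \check\eta \ge \eta$ this yields \eqref{eq: singular value comparison}, namely $s_{d_0}^2(\bar{\mathcal{O}}^+)\ge \tfrac12 s_{d_0}(Hg_{0,d_0})$.

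Next I would check that the hypotheses of Theorem \ref{thm: performance of the algorithm} hold with $\bar d_0$ replacing $d_0$. The detection threshold becomes transparent: the assumed $s_{d_0}(Hg_{0,d_0})\ge 10\xi$ together with \eqref{eq: singular value comparison} recovers $s_{d_0}^2(\bar{\mathcal{O}}^+)\ge 5\xi$, which is exactly the input requirement used for order recovery in Proposition \ref{prop: Fast rate and rank recovery via SVD} and for the Ho-Kalman stability condition \eqref{eq: Assumption for Ho-Kalman}. Consequently the order-recovery and stability analyses (Theorem \ref{thm: stability of Ho-kalman}) apply verbatim, producing on the same event of probability at least $1-4\delta$ the estimates $\hat A,\hat B,\hat C$ with the $A$-error bound of Theorem \ref{thm: performance of the algorithm} carrying $(\bar d_0+1)^{3/2}$ in the numerator and $s_{d_0}^2(\bar{\mathcal{O}}^+)$ in the denominator. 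Substituting \eqref{eq: singular value comparison} replaces $s_{d_0}^2(\bar{\mathcal{O}}^+)$ by $\tfrac12 s_{d_0}(Hg_{0,d_0})$ at the cost of a factor $2$, and bounding $\bar d_0 \le d_0\vee\check\eta \le d_0+\eta$ turns the numerator into $(d_0+\eta+1)^{3/2}$, giving the stated $\mathcal{L}^{\mathcal{M}}_2$ bound.

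Finally, for the sample-complexity side I would carry over the analysis of $\bar N_1$ but at dimension $\bar d_0$: enforcing $|\hat{\mathcal{H}}_{d_0+1} - Hg_{0,\bar d_0+1}|_{S_2}\le\xi$ through \eqref{eq: sample complexity} with $d_0\mapsto\bar d_0$, and using $\bar d_0^{3}\lesssim d_0^3\vee\eta^3$, splits the requirement into the part already absorbed by $\bar N_0$ and the genuinely new part $\bar N_\eta$, yielding $\bar N \ge \bar N_0\vee\bar N_\eta$; inverting the loss bound then produces the displayed sample complexity, identical in form to the preceding corollary with $d_0\mapsto d_0+\eta$ and $s_{d_0}^2(\bar{\mathcal{O}}^+)\mapsto s_{d_0}(Hg_{0,d_0})$, and the probability $1-4\delta$ follows from the same union bound as in Theorem \ref{thm: performance of the algorithm}. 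The main obstacle is really the comparison step of the second paragraph: one must confirm that lengthening the observability matrix does not raise the rank above $d_0$ while its smallest nonzero singular value is driven up to $s_{d_0}(Hg_{0,d_0})$ at the geometric rate $\rho(\bar A)^{2\bar d_0}$, which is exactly where stability \ref{Assumption: Stability} and the spectral-radius estimate \eqref{eq: relation operator norm and spectral radius} enter; everything else is a substitution into Theorem \ref{thm: performance of the algorithm}.
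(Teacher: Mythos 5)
Your proposal matches the paper's own argument essentially step for step: the paper likewise derives the comparison \eqref{eq: singular value comparison} by writing the enlarged $\bar{\mathcal{O}}^+$ as the order-$\bar{d}_0$ observability matrix, peeling off the last block $\bar{C}\bar{A}^{\bar{d}_0}$, invoking the balanced identity $s_{d_0}^2(\mathcal{O}) = s_{d_0}(Hg_{0,\bar{d}_0})$ together with the spectral-radius bound \eqref{eq: relation operator norm and spectral radius} under Assumption \ref{Assumption: Stability}, and then substitutes the result into Theorem \ref{thm: performance of the algorithm} with $d_0$ replaced by $\bar{d}_0$. The only discrepancies are constant-level bookkeeping (e.g.\ whether $\eta$ drives the residual below $s_{d_0}(Hg_{0,d_0})$ or $\tfrac12 s_{d_0}(Hg_{0,d_0})$) that are already present in the paper's own discussion, so the proposal is correct and takes the same route.
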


\bibliographystyle{imsart-number} 
\bibliography{sample}       

\newpage
\appendix

\section{Proofs of the main probabilistic results} \label{Appendix: Proofs of the main probabilistic results}
In this appendix we gather the proofs of the main results stated in Section \ref{sec: section on Probabilistic results} namely the proofs of Theorem \ref{thm: Restricted eigenvalue property} and of \eqref{eq: first noise term} of Theorem \ref{thm: stochatic control of the noise factor}. The proofs of the other parts of Theorem \ref{thm: stochatic control of the noise factor} are similar to those of \eqref{eq: first noise term} and are given in  Appendix \ref{Appendix: Remaining probabilistic results} for completeness. Their proofs use extensively generic chaining estimates. Thus, we start by recalling few concepts from the generic chaining literature  to fix some notation and refer to \cite{talagrand2006generic,gine_nickl_2015} for more on the topic. Let $(\mathcal{A},d)$ be a metric space. The distance of a point $t \in \mathcal{A}$ to a subset $\mathbb{A} \subseteq \mathcal{A}$ is defined as
\begin{equation*}
    d(t,\mathbb{A}) = \inf \limits_{s \in \mathbb{A} } d(t,s).
\end{equation*}
The diameter of the set $\mathbb{A}$ is 
\begin{equation*}
    \Delta(\mathbb{A}) = \sup \limits_{(s,t) \in \mathbb{A}^2} d(t,s),
\end{equation*}
and the covering number $N(\mathcal{A},d,u)$ is the smallest number of balls in $(\mathcal{A},d)$ of radius less than $u$  needed to cover $\mathcal{A}$ (\emph{i.e.}, whose union includes $\mathcal{A}$). A ball of center $c \in \mathcal{A}$ and radius $r\ge 0$ with respect to a distance $d$ or a metric $\lVert\cdot\rVert$ will be denoted $B_d(c,r)$ or $B_{\lVert\cdot\rVert}(c,r)$, respectively.

The gamma-$\alpha$ functional $\gamma_\alpha(\mathcal{A},d)$ for the metric space $(\mathcal{A}, d)$ and its corresponding upper bound by the Dudley chaining integral are defined as follows. 
\begin{align}
    \gamma_\alpha(\mathcal{A},d) := \inf \sup \limits_{t \in \mathcal{A}} \sum \limits_{r= 0}^{\infty} 2^{r/\alpha} d(t,\mathbb{A}_r) 
    \lesssim \int_{0}^{ \Delta(\mathcal{A})} (\log N(\mathcal{A}, d,u))^{1/\alpha}du,\label{ekv}
\end{align}
where the infimum is taken over all sequences of sets $(\mathbb{A}_r)_{r \in \mathbb{N}}$ in $\mathcal{A}$ with $|\mathbb{A}_0|=1$ and $|\mathbb{A}_r| \leq 2^{2^r}$ (\cite{talagrand2006generic}). If $d(x,y) = \lVert x-y\rVert$ for some norm $\lVert\cdot\rVert$ as it is usually the case, we also use the notation $\gamma_\alpha(\mathcal{A},\lVert\cdot\rVert)$ for $\gamma_\alpha(\mathcal{A},d)$.
\subsection{Isometric Property for the covariates of the input}

\begin{proof}[Proof of Theorem \ref{thm: Restricted eigenvalue property}]  
The result is an extension of \cite[Thoerem~$3.4$]{10.3150/20-BEJ1262} to the multidimensional case, and in the same spirit, we start the proof with a decomposition of the operator norm $|X^* X - \E (X^* X) |_{S_\infty}$ into the sum of 3 terms. To that end, we start by defining, for $k \in \llbracket1,2T-1\rrbracket$, the following shifted matrices:
\begin{align*}
L_k = \begin{bmatrix}
0 & \cdots & 0 & u_{2T-1} & u_{2T} &\cdots &u_{\bar{N}} &0 &\cdots &0
\end{bmatrix}^*,    
\end{align*}
where $x_{2T-1}$ is at the position $r(k - 1)+ 1$. Then we define the matrices 
\begin{equation*}
    L = [L_1 L_2 \dots L_{2T-1}] \qquad \textrm{and} \qquad S = X - L,
\end{equation*}
to get a decomposition $X = L + S$ where $L$ and $S$ are independent of each other and have a shifted diagonal structure. Thus, we have
\begin{equation*}
    X^* X = L^* L + S^* S + S^* L + L^* S.
\end{equation*}
Using this decomposition the operator norm of deviation of $X^* X$ is upper bounded by
\begin{align}
     &|X^* X - \E (X^* X) |_{S_\infty} \leq |L^* L - \sigma_u^2(N-4T +3)I_{(2T-1)r}|_{S_\infty}  \nonumber\\
    &\qquad \qquad \qquad + \sigma_u^2(2T-2)+ 2|S^* L|_{S_\infty} +  |S^* S|_{S_\infty}. \label{eq: 3 terms to upper-bound}
\end{align}

We thus need to derive high probability bounds for the last three terms. Below, we give the derivation for the first term. The others are treated similarly, and the contribution of the first term dominates their contribution.  

We start by relating the operator norm of $L^* L - \sigma_u^2(N-4T +3)I_{(2T-1)r}$ to the supremum of a multiplication process. Since the columns of $L$ are shifted versions of each others, we have 
\begin{align*}
L^* L=\begin{bmatrix}
L_1^* L_1 &L_1^*L_2 & &L_1^*L_{2T-1} \\
L_2^*L_1 &L_1^*L_1  & &L_1^*L_{2T-2} \\
& & &\\
L_{2T-1}^*L_1 &L_{2T-2}^*L_1 & &L_1^*L_1
\end{bmatrix}. 
\end{align*}
Define the block Toeplitz operator $\mathcal{T}:\, l_{\mathbb{R}^r}(\mathbb{Z}) \to l_{\mathbb{R}^r}(\mathbb{Z})$ by the infinite diagonals of block matrices given
\begin{equation*}
    \mathcal{T}_0 = L_1^* L_1 - \sigma_u^2(N-4T +3)I_r,\ \mathcal{T}_l =L_1^*L_{l+1},\ \text{and} \ \mathcal{T}_{-l} =\mathcal{T}_l^*\ \text{for} \ l \in \llbracket 1, 2T-2\rrbracket.
\end{equation*}
The corresponding multiplication polynomial defined for $x \in [0,1]$ is given by 
\begin{equation*}
    p(x) = \sum \limits_{l=-2T+2}^{2T-2} \mathcal{T}_l e^{2 i \pi l x}.
\end{equation*}
Since $L^* L - \sigma_u^2(N-4T +3)I_{(2T-1)r}$ is a submatrix of $\mathcal{T}$, we have
\begin{equation}
    |L^* L - \sigma_u^2(N-4T +3)I_{(2T-1)r}|_{S_\infty} \leq |\mathcal{T}|_{2 \to 2} = \sup \limits_{x \in [0,1]} |p(x)|_{S_\infty},\label{eq : main HDP2}
\end{equation}
where $|\cdot|_{2 \to 2}$ stands for the operator norm.
The last supremum can also be expressed as 
\begin{multline*}
       \sup \limits_{x \in [0\ 1]} \sup \limits_{\substack{|v|_2 = 1\\
    |w|_2 = 1}}\Big| \sum \limits_{j=2T-1}^{\bar{N}} (\inner{u_j}{v}\inner{u_j}{w}- \sigma_u^2\inner{v}{w})\\
    + \sum \limits_{l=1}^{2T-2} \sum \limits_{j=2T-1+l}^{\bar{N}} \inner{u_j}{v}\inner{u_{j - l}}{w}  e^{2 i \pi l x}\\
    + \sum \limits_{l=1}^{2T-2} \sum \limits_{j=2T-1+l}^{\bar{N}} \inner{u_{j - l}}{v}\inner{u_j}{w}  e^{-2 i \pi l x}\Big|. 
\end{multline*}
Consider the block Toeplitz matrix $\mathcal{H} \in \mathcal{M}_{(N-4T +3)r \times (N -4T +3)r}(\mathbb{R})$ with block constant diagonals made of matrices $\mathcal{H}_l \in \mathcal{M}_{r \times r}(\mathbb{R})$ with $(j,k)$ entries 
\begin{equation*}
    \mathcal{H}_l(x,v,w) =  e^{2 i \pi l x} vw^* \mathds{1}\{ l \in \llbracket 0,2T-2\rrbracket\}\  \ \text{and} \ \mathcal{H}_l(x,v,w) =\mathcal{H}_{-l}(x,v,w)^* \ \text{for} \ l < 0.
\end{equation*}
Taking $u = [u_{2T-1}^*, \cdots ,u_{N-2T-1}^*]^*$ we obtain 
\begin{equation*}\begin{array}{ll}
    |L^* L - \sigma_u^2(N -4T +3)I_{(2T-1)r}|_{S_\infty}  \leq \sup \limits_{x \in [0,1]} |p(x)|_{S_\infty}  \\ \qquad\qquad\qquad\qquad\qquad\qquad\qquad\qquad= \sup \limits_{x \in [0\ 1]} \sup \limits_{\substack{|v|_2 = 1\\
    |w|_2 = 1}} |\inner{u}{\mathcal{H}(x,v,w)u}|.
    \end{array}
\end{equation*}
This defines a second order chaos process $\xi_{x,v,w} = \inner{u}{\mathcal{H}(x,v,w)u}$. We control its deviation using the Hanson-Wright inequality \cite{rudelson2013} to get, for all $t > 0$ with probability at least $1 - 2 e^{-ct}$, 
\begin{equation*}\begin{array}{ll}
     |\chi_{x_1,v_1,w_1}-\chi_{x_2,v_2,w_2}|\leq  \sqrt{t} d_2 ((x_1,v_1,w_1),(x_2,v_2,w_2)) \\ \qquad\qquad\qquad\qquad\qquad\quad + t d_\infty  ((x_1,v_1,w_1),(x_2,v_2,w_2)),
     \end{array}
\end{equation*}
where
\begin{align*}
    &d_\infty ((x_1,v_1,w_1),(x_2,v_2,w_2)) := |\mathcal{H}(x_1,v_1,w_1)-\mathcal{H}(x_2,v_2,w_2)|_{S_\infty}
\end{align*}
and
\begin{align*}
    &d_2 ((x_1,v_1,w_1),(x_2,v_2,w_2)) := |\mathcal{H}(x_1,v_1,w_1)-\mathcal{H}(x_2,v_2,w_2)|_{S_2}.
\end{align*}
The generic chaining result in ~\cite[Theorem ~$2.2.23$]{talagrand2006generic} and ~\cite[Theorem ~$3.5$]{dirksen2015} provides us with the following bound for the supremum of such mixed tail process for $t \geq 1$:
\begin{equation}
    \mathbb{P}\left( \sup \limits_{x \in [0,1]} |p(x)| \geq c  \sigma_u^2 \left( E +\sqrt{t} V + t U \right)\right) \leq 2 \exp{(-u)},\label{eq: HDP0 on polynomial}
\end{equation}
where 
\begin{align*}
    E &= \gamma_2([0,1] \times \mathbb{S}_2^{r-1} \times \mathbb{S}_2^{r-1},d_2) + \gamma_1([0,1] \times \mathbb{S}_2^{r-1} \times \mathbb{S}_2^{r-1}, d_\infty), \\
    V &= \Delta_2([0,1] \times \mathbb{S}_2^{r-1} \times \mathbb{S}_2^{r-1},d_2),\quad
    U= \Delta_\infty([0,1] \times \mathbb{S}_2^{r-1} \times \mathbb{S}_2^{r-1},d_\infty).
\end{align*}
To conclude the proof, it suffices to estimate these three terms. We start with few inequalities to simplify the involved the norm distances\begin{align*}
    &d_\infty ((x_1,v_1,w_1),(x_2,v_2,w_2)) := |\mathcal{H}(x_1,v_1,w_1)-\mathcal{H}(x_2,v_2,w_2)|_{S_\infty} \\
    &\leq 2\sup \limits_{y \in [0\ 1]}\left|\sum \limits_{l= 1}^{2T-2} e^{2 i \pi l (x_1 + y)}u_1v_1^* - e^{2 i \pi l (x_2 + y)}u_2v_2^*\right|_{S_\infty}+ \left| u_1v_1^* - u_2v_2^*\right|_{S_\infty}\\
    &\leq 2\sup \limits_{y \in [0\ 1]}\left|\sum \limits_{l= 1}^{2T-2} e^{2 i \pi l (x_1 + y)} - e^{2 i \pi l (x_2 + y)}\right|+ (4T-3)\left| u_1v_1^* - u_2v_2^*\right|_{S_\infty}\\
    &\lesssim T^2\left|x_1 -x_2\right|+ T\left| u_1 - u_2\right|_2+ T\left| v_1 - v_2\right|_2,
\end{align*}
where we used Proposition \ref{prop: control of nuclear norm by l2 norm} and the Liptchitz property of the complex exponential in the last step.  Similarly, we have
\begin{align*}
    &d_2 ((x_1,v_1,w_1),(x_2,v_2,w_2)) := |\mathcal{H}(x_1,v_1,w_1)-\mathcal{H}(x_2,v_2,w_2)|_{S_2} \\
    &\leq \sqrt{N-4T + 3}\left(2 \left( \sum \limits_{l= 1}^{2T-2}\left| e^{2 i \pi l x_1}u_1v_1^* - e^{2 i \pi l x_2}u_2v_2^*\right|_{S_2}^2\right)^{1/2}+ \left| u_1v_1^* - u_2v_2^*\right|_{S_2}\right)\\
    &\leq \sqrt{N-4T + 3}\left(2\left|\sum \limits_{l= 1}^{2T-2} (e^{2 i \pi l x_1} - e^{2 i \pi l x_2})^2\right|^{1/2}+ (2\sqrt{T-1}+1)\left| u_1v_1^* - u_2v_2^*\right|_{S_2}\right)\\
    &\lesssim \sqrt{N-4T + 3}\left(T^{3/2}\left|x_1 -x_2\right|+ T^{1/2}\left| u_1 - u_2\right|_2+ T^{1/2}\left| v_1 - v_2\right|_2\right).
\end{align*}
The radii $U$ and $V$ become
\begin{equation}
    U \lesssim T \ \text{and} \ V \lesssim \sqrt{( N-4T + 3) T}. \label{eq : HDP1}
\end{equation}
The $\gamma_1$ functional is evaluated as 
\begin{align}
    & \gamma_1([0,1] \times \mathbb{S}_2^{r-1} \times \mathbb{S}_2^{r-1}, d_\infty) \lesssim  \gamma_1([0,1] , T^2\left|\cdot \right|) +\gamma_1( \mathbb{S}_2^{r-1}, T\left| \cdot \right|_2)\\
    &\int_{0}^{T} \ln N([0,1], T^2 |\cdot|,u)du + \int_{0}^{T} \ln N(\mathbb{S}_2^{r-1}, T |\cdot|_2,u)du\\
    &= \int_{0}^{T} \ln \frac{T^2}{u} du + \int_{0}^{T} \ln \left(\frac{T}{u}\right)^r du\simeq  T (\ln(T) + r). \label{eq : HDP2}
\end{align}
Similarly, we can evaluate the $\gamma_2$ functional to get
\begin{align}
    \gamma_2([0,1] \times \mathbb{S}_2^{r-1} \times \mathbb{S}_2^{r-1},d_2)
    &\lesssim  \sqrt{T (N -4T + 3)(\ln(T) + r)}, \label{eq: HDP4}
\end{align}
\noindent Putting this last result together with \eqref{eq : HDP1}, \eqref{eq : HDP2}, \eqref{eq: HDP4} and \eqref{eq: HDP0 on polynomial} gives with probability at least $1-e^{-t}$ for $t \geq 1$:
\begin{multline*}
     |L^* L - \sigma_u^2(N -4T +3)I_{(2T-1)r}|_{S_\infty} \lesssim \sigma_u^2 \Big(T (\ln (T) + r) \\+ \sqrt{T (N -4T + 3)(\ln(T) + r)} + \sqrt{T (N -4T + 3)}\sqrt{t} +  Tt \Big).
\end{multline*}
We can bound the second and third term in \eqref{eq: 3 terms to upper-bound} by modifying the argument in \cite[Thoerem~$3.4$]{10.3150/20-BEJ1262} the same way we did here for the first term. This give us with probability at least $1- e^{-t}$, for all $t \geq 1$,
\begin{equation}
    |S^* S |_{S_\infty} \lesssim \sigma_u^2 \Big(T (\ln (T) + r )+ Tt\Big)\label{eq : main HDP1}
\end{equation}
and
\begin{equation*}
       |L^* S|_{S_\infty} \lesssim \sigma_u^2 \Big( T(\ln(T) + r)+  Tt \Big).
\end{equation*}
A straightforward union bound implies that with probability at least $1-e^{-t}$, for all $t \geq 1$, we have
\begin{align*}
    &|X^* X - \E (X^* X) |_{S_\infty} \lesssim \sigma_u^2 \Big( T (\ln(T) + r)+ \sqrt{T (N-4T + 3) (\ln(T)+r)}\\ 
    &\qquad \qquad \qquad \qquad\qquad + \sqrt{(N-4T + 3) T}t^{1/2} + Tt \big)\sigma_u^2 \Big).
\end{align*}
This last expression directly gives the claimed result in the theorem after normalization.
\end{proof}

\begin{proof} [Proof of \eqref{eq: first noise term} in Theorem \ref{thm: stochatic control of the noise factor}]
Define the permuted index 
\begin{equation*}
    \bar{l} = \begin{cases}
    l-2T \quad \text{  if } T+1\leq l\leq 2T-1,\\
l  \qquad \qquad \textrm{otherwise}.
\end{cases}
\end{equation*} 
and
\begin{align*}
    x &= [u^*_0,u^*_1,\dots,u^*_{N-2},u^*_{N-1}]^* \in \mathbb{R}^{Nr}\\    
    U_l &= [u_{2T-l},u_{2T+1-l},\dots,u_{N-l}]  \in \mathcal{M}_{r  \times \bar{N} }(\mathbb{R}).
\end{align*}
In view of the definition of $H^{\dagger^*}$ in \eqref{Adjoint} we have
\begin{equation*}
{H^{\dagger^*}}X^*\bar{X}\bar{g}^* = \begin{bmatrix}
&(U_1\bar{X}\bar{g}^*)^*  &\frac{1}{2} (U_2\bar{X}\bar{g}^*)^* &\dots &\frac{1}{T} (U_T\bar{X}\bar{g}^*)^* \\
& \frac{1}{2} (U_2\bar{X}\bar{g}^*)^* & \frac{1}{3}(U_3\bar{X}\bar{g}^*)^* &\dots &\frac{1}{T-1} (U_{T+1}\bar{X}\bar{g}^*)^* \\
&\vdots &\vdots &\vdots &\vdots\\
&\frac{1}{T} (U_T\bar{X}\bar{g}^*)^* &\frac{1}{T-1} (U_{T+1}\bar{X}\bar{g}^*)^* &\dots &  (U_{2T-1}\bar{X}\bar{g}^*)^*
\end{bmatrix}.
\end{equation*}
So we want to find a high probability bound on the operator norm of the matrix $H^{\dagger^*}X^*\bar{X}\bar{g}$. 
Define the infinite block Hankel operator $\mathcal{H}: \ l_2(\mathbb{N}) \to l_2(\mathbb{N})$ by the $\mathcal{M}_{p \times r }(\mathbb{R})$ blocks
\begin{align*}
\mathcal{H}_{i,j} = \begin{cases}
 1/|\bar{l}|(U_l\bar{X}\bar{g})^* \quad \text{for} \quad (i,j) \in \mathbb{N}^2, \ \text{and} \ 1\leq|i-j|=l \leq 2T-1,\\
0 \quad \text{otherwise}.
\end{cases} 
\end{align*}
Then
\begin{equation*}
    |H^{\dagger^*}X^*\bar{X}\bar{g}^*|_{S_\infty} \leq |\mathcal{H}|_{2 \to 2}
\end{equation*}
where $|\cdot|_{2 \to 2}$ stands for the operator norm from $l_2(\mathbb{Z})$ to $l_2(\mathbb{Z})$. The corresponding multiplication polynomial defined for $u \in [0,1]$ is given by 
\begin{equation*}
    p(u) = \sum \limits_{l=1}^{2T-1} \frac{1}{|\bar{l}|} U_l\bar{X}\bar{g}^* \exp{(i2 \pi \bar{l} u)}.
\end{equation*}
where we have used a permuted Fourier basis by the mapping $l \to \bar{l}$. Thus, using Proposition \ref{prop: upper bound on the operator norm of the multiplication process}, we obtain
\begin{align*}
|H^{\dagger^*}X^*\bar{X}\bar{g}^*|_{S_\infty} &\leq \sup \limits_{u \in [0\ 1]} |p(u)|_{S_\infty}= \sup \limits_{u \in [0,\ 1]} \left|\sum \limits_{l=1}^{2T-1} \frac{e^{i2 \pi \bar{l} u}}{|\bar{l}|} U_l\bar{X}\bar{g}^* \right|_{S_\infty} \\
&= \sup \limits_{u \in [0\ 1]} \sup \limits_{\substack{|v|_2 = 1\\
    |w|_2 = 1}} \left| \innerl{\sum \limits_{l=1}^{2T-1} \frac{e^{i2 \pi \bar{l} u}}{|\bar{l}|} U_l\bar{X}\bar{g}^*}{vw^*} \right| \\
    &= \sup \limits_{u \in [0\ 1]} \sup \limits_{\substack{|v|_2 = 1\\
    |w|_2 = 1}} \left| \innerl{\bar{X}\bar{g}^*}{\sum \limits_{l=1}^{2T-1} \frac{e^{i2 \pi \bar{l} u}}{|\bar{l}|} U_l^*vw^*} \right|.
\end{align*}
Define, for $l \in [0\ N-2T]$, the vectors $G_l \in \mathcal{M}_{p \times rN}(\mathbb{R})$ as
\begin{align*}
        G_l &= [C_0A_0^{2T-1+l}B_0,C_0A_0^{2T+l}B_0,\dots,C_0A_0^{2T-1}B_0,0,\dots,0],
\end{align*}
and for $u \in [0\ 1]$ the matrix valued functions $w_{u,v,w,l} \in \mathcal{M}_{r \times p}(\mathbb{C})$ by $w_{u,v,w,l} = \frac{\exp(i2\pi \bar{l}u)}{|\bar{l}|}vw^*$ for $l \in [1\ 2T-1]$ and the matrix valued functions $W_{u,v,w,l} \in \mathcal{M}_{p \times Nr}(\mathbb{C})$ by
\begin{align*}
W_{u,v,w,k} = \begin{bmatrix}
 & & 0 & w_{u,v,w,2T-1}^* & w_{u,v,w,2T-2}^* &\cdots &w_{u,v,w,1}^* & &
\end{bmatrix} ,    
\end{align*}
with the 1st zero a the $k^{\text{th}}$-position. Define $G$ as $G = [G_0^*,\dots,G_{N-2T}^*]^* $ and $W_{u,v,w}$ as $W_{u,v,w} = [W_{u,v,w,1}^*,\dots,W_{u,v,w,N-2T-1}^*]^*$ satisfying 
\begin{equation*}
    \innerl{\bar{X}\bar{g}^*}{\sum \limits_{l=1}^{2T-1} \frac{e^{i2 \pi \bar{l} u}}{|\bar{l}|} U_l^*vw^*}  = \innerl{W_{u,v,w}x}{Gx}.
\end{equation*}
This gives
\begin{align*}
|{H^{\dagger}}^{*}X^*\bar{X}\bar{g}|_{S_\infty} &\leq \sup \limits_{u \in [0\ 1]} \sup \limits_{\substack{|v|_2 = 1\\
    |w|_2 = 1}} |\innerl{G^*W_{u,v,w}x}{x}|
\end{align*}
which is the supremum of a second order chaos process defined by
\begin{align*}
\chi_{u,v,w} =  \innerl{G^*W_{u,v,w}x}{x}.
\end{align*}
To control the increment of the process we use Hanson Wright  inequality \cite{rudelson2013} which yields
\begin{equation*}
    \mathbb{P}\left( |\chi_{u,v,w} - \mathbb{E}(\chi_{u,v,w})|\geq c \left(\sqrt{t} |G^*W_{u,v,w}|_{S_2} + t |G^*W_{u,v,w}|_{S_\infty}  \right) \right) \leq 2 \exp{(-ct)}.
\end{equation*}
Since $ \mathbb{E}(\chi_{u,v,w}) = 0$, we obtain a mixed tail process with probability $1-e^{-t}$
\begin{equation}\begin{array}{lll}
    |\chi_{u_1,v_1,w_1} - \chi_{u_2,v_2,w_2}| \leq  c  \Big(\sqrt{t} |G^*\left(W_{u_1,v_1,w_1}-W_{u_2,v_2,w_2})\right)|_{S_2} \\
    \qquad\qquad\qquad\qquad\qquad\qquad + t |G^*\left(W_{u_1,v_1,w_1}-W_{u_2,v_2,w_2})\right)|_{S_\infty}  \Big) \\
   \qquad\qquad\qquad\qquad\qquad \leq  c |G|_{S_\infty} \left(\sqrt{t} |W_{u_1,v_1,w_1}-W_{u_2,v_2,w_2}|_{S_2} \right. \\ \left.
     \qquad\qquad\qquad\qquad\qquad\qquad + t |W_{u_1,v_1,w_1}-W_{u_2,v_2,w_2}|_{S_\infty}  \right).
    \end{array}
\end{equation}
Consider the pseudo-distances $d_2$ and $d_{\infty}$ defined on $[0,1]\times \mathbb{S}_2^{r-1}\times \mathbb{S}_2^{p-1}$ by  
\begin{multline*}
    \qquad \qquad d_2((u_{1},v_1,w_1),(u_2,v_2,w_2)) = |W_{u_1,v_1,w_1}-W_{u_2,v_2,w_2}|_{S_2} \\
    d_\infty ((u_{1},v_1,w_1),(u_2,v_2,w_2)) = |W_{u_1,v_1,w_1}-W_{u_2,v_2,w_2}|_{S_\infty}.\qquad \qquad
\end{multline*} 
The generic chaining result proved independently in ~\cite[Theorem ~$2.2.23$]{talagrand2006generic} and ~\cite[Theorem ~$3.5$]{dirksen2015} provides the following bound for the supremum of such mixed tail process for $t \geq 1$:
\begin{equation}
    \mathbb{P}\left( \underset{(u,v,w) \in [0,1] \times \mathbb{S}_2^{r-1 \times \mathbb{S}_2^{p-1}}}{\sup} |\chi_{u,v,w}| \geq C  \sigma_u^2 |G|_{S_\infty} \left( E +\sqrt{t} \Delta_{S_2} + t \Delta_{S_\infty} \right)\right) \leq 2 \exp{(-t)}, \label{eq: main estimate for noise level}
\end{equation}

where 
\begin{align*}
    &E = \gamma_2([0,1] \times \mathbb{S}_2^{r-1} \times \mathbb{S}_2^{p-1} ,d_2) + \gamma_1([0,1] \times \mathbb{S}_2^{r-1} \times \mathbb{S}_2^{p-1},d_\infty), \\
    &\Delta_{S_2} = \sup \limits_{(u_{1},v_1,w_1),(u_2,v_2,w_2) \in [0,1] \times \mathbb{S}_2^{r-1} \times \mathbb{S}_2^{p-1} }d_2((u_{1},v_1,w_1),(u_2,v_2,w_2)),\\
    &\Delta_{S_\infty} = \sup \limits_{(u_{1},v_1,w_1),(u_2,v_2,w_2) \in  [0,1] \times \mathbb{S}_2^{r-1} \times \mathbb{S}_2^{p-1} }d_\infty((u_{1},v_1,w_1),(u_2,v_2,w_2)).
\end{align*}
To conclude the proof, it suffices to estimate these four terms. We start with the terms which involves the distance $d_\infty$. An estimate of $d_\infty$ is obtained by seeing $W_{u,v,w}$ as a sub-matrix of an infinite Toeplitz matrix $\Tilde{W}_{u,v,w}$ defined by,
\begin{equation*}
    \Tilde{W}_{u,v,w} = \left[\mathds{1}_{1 \leq j-k+1\leq 2T-1} (w_{u,v,w}^*)_{j-k+1}\right]_{(j,k)\in \mathbb{Z}^2}
\end{equation*}
and the corresponding multiplication polynomial is
\begin{align*}
    q_{u,v,w}(z) &= \sum \limits_{l=1}^{2T-1} \frac{\exp{(i2 \pi \bar{l} (u+z))}}{|\bar{l}|}vw^*.
\end{align*}
The diameter $\Delta_{S_\infty}$ becomes
\begin{align*}
    \Delta_{S_\infty} &\leq 2 \sup \limits_{u \in [0\ 1]}\sup \limits_{\substack{|v|_2 = 1\\
    |w|_2 = 1}} |\Tilde{W}_{u,v,w}|_{2 \to 2} = 2 \sup \limits_{\substack{ u \in [0\ 1]\\
    z \in [0\ 1]}} \sup \limits_{\substack{|v|_2 = 1\\
    |w|_2 = 1}} |q(z)|_{S_2}\\
    &= 2 \sup \limits_{\substack{ u \in [0\ 1]\\
    z \in [0\ 1]}} \left|\sum \limits_{l=1}^{2T-1} \frac{\exp{(i2 \pi \bar{l} (u+z))}}{|\bar{l}|} \right| \leq \sum \limits_{l=1}^{T-1} \frac{2}{|\bar{l}|} \lesssim \log(T).
\end{align*}
Using the fact that the complex exponential is Lipschitz, we have 
\begin{align*}
    d_\infty((u_{1},v_1,w_2),(u_{1},v_1,w_2)) &\leq \sum \limits_{l=1}^{T-1} \frac{1}{|\bar{l}|}|v_1w_1^* - v_2w_2^* |_{S_2} \\ & \qquad + \left| \sum \limits_{l=1}^{T-1} \frac{1}{|\bar{l}|}(e^{i2 \pi l (u_1-z)} - e^{i2 \pi l (u_2-z)}) \right|\\
    &\lesssim \log(T) |v_1 - v_2 |_2+ \log(T) |w_1 - w_2 |_2 + T |u_1-u_2|.
\end{align*}
The $\gamma_1$ functional is evaluated as 
\begin{align*}
    &\gamma_1([0,1]\times \mathbb{S}_2^{r-1} \times \mathbb{S}_2^{p-1},d_{S_\infty}) \leq \gamma_1([0,1], T |\cdot|) + \gamma_1(\mathbb{S}_2^{r-1},\log(T) |\cdot |_2) \\ & \qquad\qquad\qquad\qquad\qquad\qquad\qquad + \gamma_1(\mathbb{S}_2^{p-1},\log(T) |\cdot |_2)\\
    &\lesssim \int_{0}^{ \Delta_{S_\infty}  } \log N([0,1], T |\cdot|,u)du + \int_{0}^{ \Delta_{S_\infty}  } \log N(\mathbb{S}_2^{r-1},\log(T) |\cdot|_2,u)du  \\ & \qquad + \int_{0}^{ \Delta_{S_\infty}  } \log N(\mathbb{S}_2^{p-1},\log(T) |\cdot|_2,u)du \\
    &\lesssim \int_{0}^{\log (T)} \log \left(\frac{ T}{u}\right) du  +\int_{0}^{\log (T)} \log \left(\frac{ \log(T)}{u}\right)^r du + \int_{0}^{\log (T)} \log \left(\frac{ \log(T)}{u}\right)^p du\\
    &\leq \log^2{(T)} - \int_{0}^{\log T} \log(u)du + ( p +r) \left(\log(T)\log \log(T) - \int_{0}^{\log T} \log(u)du\right)\\
    &\leq \log^2(T) + (p + r) \log(T).
\end{align*}
We now turn to the terms involving the pseudo-distance $|W_{u_1,v_1,w_1} - W_{u_2,v_2,w_2}|_{S_2}$. Again the complex exponential is -Lipschitz, we have 
\begin{align*}
    &d_{S_2} ((u_{1},v_1,w_1),(u_2,v_2,w_2)) \\
    &\leq  \sqrt{\bar{N}} \left(\left(\sum \limits_{l=1}^{2T-1}  \frac{1}{\bar{l}^2} |v_1w_1^*- v_2w_2^* |_{S_2}^2 \right)^{1/2} + \left(\sum \limits_{l=1}^{2T-1}  \frac{1}{\bar{l}^2} | \exp (i2 \pi l u_1) - \exp( i2 \pi l u_2)|^2 \right)^{1/2}\right)\\
    &\lesssim  \sqrt{\bar{N}} \left(|v_1- v_2 |_2 + |w_1- w_2 |_2 + \sqrt{T} |u_1-u_2|\right).
\end{align*}
The radius $\Delta_{S_2}$ satisfies
\begin{equation*}
    \Delta_{S_2} =  2 \sup \limits_{u\in [0\ 1]}|W_u|_{S_2} \simeq \sqrt{\bar{N}}.
\end{equation*}
The $\gamma_2$ functional satisfies
\begin{align*}
    &\gamma_2([0,1] \times \mathbb{S}_2^{r-1} \times \mathbb{S}_2^{p-1},d_2) \leq \gamma_2([0,1], \sqrt{\bar{N}T} |\cdot|)  \\ & \qquad\qquad \qquad \qquad \qquad \qquad \qquad   + \gamma_2(\mathbb{S}_2^{r-1},\sqrt{\bar{N}} |\cdot |_2) + \gamma_2(\mathbb{S}_2^{p-1},\sqrt{\bar{N}} |\cdot |_2)\\
    &\lesssim \int_{0}^{ \Delta_{S_2} } \left( \log N([0,1],\sqrt{\bar{N}T} |\cdot|,u) \right)^{1/2} du + \int_{0}^{ \Delta_{S_2} } \left( \log N(\mathbb{S}_2^{r-1},\sqrt{\bar{N}} |\cdot|_2,u) \right)^{1/2} du  \\ & \qquad + \int_{0}^{ \Delta_{S_2} } \left( \log N(\mathbb{S}_2^{p-1},\sqrt{\bar{N}} |\cdot|_2,u) \right)^{1/2} du  \\
    &= \int_{0}^{\sqrt{\bar{N}}} \left( \log \frac{\sqrt{\bar{N}T}}{u}\right)^{1/2} du + \int_{0}^{\sqrt{\bar{N}}} \left( \log \left(\frac{3\sqrt{\bar{N}}}{u}\right)^r\right)^{1/2} du + \int_{0}^{\sqrt{\bar{N}}} \left( \log \left(\frac{3\sqrt{\bar{N}}}{u}\right)^p\right)^{1/2} du \\
    &=  \sqrt{\bar{N}T}  \int_{\sqrt{\log (T)}}^{\infty}\,\, t^2 \exp(-t^2/2) dt + 6\sqrt{\bar{N}r}  \int_{\sqrt{\log 3}}^{\infty}\,\, t^2 \exp(-t^2/2) dt  \\ & \qquad + 6\sqrt{\bar{N}p}  \int_{\sqrt{\log 3}}^{\infty}\,\, t^2 \exp(-t^2/2) dt\\
    &\lesssim  \sqrt{\bar{N}T}  \left(\frac{\sqrt{\log (T)}}{\sqrt{T}} + \frac{1}{\sqrt{T}\sqrt{\log (T)}}\right) + \sqrt{\bar{N}(p+r)}\lesssim  \sqrt{\bar{N}\log(T)} + \sqrt{\bar{N}(p+r )},
\end{align*}
where in the last step we did and integration by parts and used ~\cite[Formula~$7.1.13$]{Abramowitz:1974:HMF:1098650}.

\noindent Putting these estimates together enables us to bound the supremum of the stochastic polynomial $\chi_{u,v,w}$ with high probability as expressed in \ref{eq: main estimate for noise level}. This in turn implies that with probability at least $1-\exp{(-t)}$, for $t \geq 1$,
\begin{multline*}
       \frac{1}{\bar{N}}|{H^{\dagger}}^{*}X^*\bar{X}\bar{g}|_{S_\infty} \lesssim \sigma_u^2|G|_{S_\infty} \Big( \sqrt{\frac{\log(T) + p + r}{\bar{N}}} \\
       + \frac{\log^2(T) + ( p+ r )\log(T)}{\bar{N}} +\sqrt{ \frac{t}{\bar{N} }} +  \frac{\log(T)t}{\bar{N}} \Big).
\end{multline*}
\end{proof}
\section{Deterministic estimates} \label{Appendix: Deterministic estimates}
In this Appendix we provide the proofs of some deterministic inequalities that are needed especially in Appendix \ref{Appendix: Proofs of the main probabilistic results}.
\begin{proposition}\label{prop: control of nuclear norm by l2 norm}
For $(v_1,w_1)$ and $(v_2,w_2)$ in $\mathbb{S}_2^{r-1}\times \mathbb{S}_2^{p-1}$ the following norm inequality holds:
\begin{equation*}
    |v_1w_1^* - v_2w_2^*|_{S_\infty} \leq |v_1 - v_2|_2 + |w_1 -w_2|_2. 
\end{equation*}
\end{proposition}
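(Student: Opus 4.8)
The plan is to reduce the bound to the operator norm of a single rank-one matrix via a standard add-and-subtract step exploiting the bilinearity of the outer product. First I would insert the term $v_1w_2^*$ and write
\[
    v_1w_1^* - v_2w_2^* = v_1(w_1 - w_2)^* + (v_1 - v_2)w_2^*.
\]
Applying the triangle inequality for the norm $|\cdot|_{S_\infty}$ then yields
\[
    |v_1w_1^* - v_2w_2^*|_{S_\infty} \leq |v_1(w_1 - w_2)^*|_{S_\infty} + |(v_1 - v_2)w_2^*|_{S_\infty},
\]
so that the problem is split into two rank-one contributions.

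The key observation is the elementary identity $|ab^*|_{S_\infty} = |a|_2|b|_2$, valid for any $a \in \mathbb{R}^r$ and $b \in \mathbb{R}^p$. This follows because $(ab^*)x = \inner{b}{x}a$ for every $x$, so that $|(ab^*)x|_2 = |a|_2|\inner{b}{x}| \leq |a|_2|b|_2|x|_2$ by the Cauchy--Schwarz inequality, with equality attained at $x = b/|b|_2$. Applying this to each of the two terms and using the fact that $v_1$ and $w_2$ lie on the unit spheres, i.e.\ $|v_1|_2 = |w_2|_2 = 1$, I would obtain $|v_1(w_1 - w_2)^*|_{S_\infty} = |w_1 - w_2|_2$ and $|(v_1 - v_2)w_2^*|_{S_\infty} = |v_1 - v_2|_2$, which together with the previous display gives exactly the claimed inequality.

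There is no genuine obstacle here: the argument is entirely elementary and purely deterministic. The only point deserving a line of justification is the rank-one operator norm identity, which is immediate from Cauchy--Schwarz, and the unit-norm normalization of $v_1$ and $w_2$ is precisely what collapses the two prefactors to $1$. The choice of which two vectors to normalize is free --- one could equally well decompose as $v_2(w_1-w_2)^* + (v_1-v_2)w_1^*$ --- and the same symmetric bound is obtained either way.
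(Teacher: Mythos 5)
Your proof is correct, but it takes a different route from the paper's. You use the standard bilinear splitting $v_1w_1^* - v_2w_2^* = v_1(w_1-w_2)^* + (v_1-v_2)w_2^*$, the triangle inequality, and the rank-one identity $|ab^*|_{S_\infty}=|a|_2|b|_2$; all of these steps are valid, and the unit-norm hypotheses are used exactly where you say. The paper instead expands the squared operator norm directly, writing $|v_1w_1^*-v_2w_2^*|_{S_\infty}^2=\sup_{|a|_2=1}\left(\langle w_1-w_2,a\rangle^2+|v_1-v_2|_2^2\,\langle w_1,a\rangle\langle w_2,a\rangle\right)$ after regrouping, and bounds this by $|v_1-v_2|_2^2+|w_1-w_2|_2^2$. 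That computation yields the slightly sharper intermediate estimate $|v_1w_1^*-v_2w_2^*|_{S_\infty}\leq\left(|v_1-v_2|_2^2+|w_1-w_2|_2^2\right)^{1/2}$ before relaxing to the sum, whereas your argument lands directly on the sum; since the paper only ever invokes the additive form, nothing is lost, and your decomposition is arguably the cleaner and more reusable argument (it does not require the vectors to be real or the inner-product expansion to be carried out by hand).
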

\begin{proof}
Take $(v_1,w_1)$ and $(v_2,w_2)$ both in $\mathbb{S}_2^{r-1}\times \mathbb{S}_2^{p-1}$ and note that
    \begin{align*}
        |v_1w_1^* - v_2w_2^*|_{S_\infty}^2 &= \sup \limits_{a \in \mathbb{S}_2^{p-1}} | v_1\innerl{w_1}{a} - v_2\innerl{w_2}{a}|_2^2\\
        &= \sup \limits_{a \in \mathbb{S}_2^{p-1}} | v_1|_2^2\innerl{w_1}{a}^2 + | v_2|_2^2\innerl{w_2}{a}^2 - 2\innerl{v_1}{v_2}\innerl{w_1}{a}\innerl{w_2}{a}\\
        &= \sup \limits_{a \in \mathbb{S}_2^{p-1}} \innerl{w_1}{a}^2 + \innerl{w_2}{a}^2 +(|v_1-v_2|_2^2-2)\innerl{w_1}{a}\innerl{w_2}{a}\\
        &= \sup \limits_{a \in \mathbb{S}_2^{p-1}} \innerl{w_1 - w_2}{a}^2 +|v_1-v_2|_2^2\innerl{w_1}{a}\innerl{w_2}{a}\\
        &\leq \sup \limits_{a \in \mathbb{S}_2^{p-1}} \innerl{w_1 - w_2}{a}^2 +|v_1-v_2|_2^2\\
        &\leq  |v_1 - v_2|_2^2 +|w_1 - w_2|_2^2.
    \end{align*}
Taking the square root we obtain
    \begin{equation*}
        |v_1w_1^* - v_2w_2^*|_{S_\infty} \leq ( |v_1-v_2|_2^2 + |w_1-w_2|_2^2)^{1/2} \leq  |v_1-v_2|_2 + |w_1-w_2|_2.
    \end{equation*}
\end{proof}

\begin{proposition} \label{prop: upper bound on the operator norm of the multiplication process}
Let $\mathcal{H}$ be the infinite bloc Toeplitz matrix made of $2T-1 $ diagolals blocs of the matices $h_l \in \mathcal{M}_{p \times r}(\mathbb{R})$ with $l \in \llbracket 1,2T-1\rrbracket$, then its operator norm $|\mathcal{H}|_{2 \to 2}$ is upper bounded by 
\begin{equation*}
    |\mathcal{H}|_{2 \to 2} \leq \sup \limits_{t \in [0,1]} \left( \left|\sum \limits_{l=1}^{2T-1} \exp(i2 \pi l t)h_{l}    \right|_{S_\infty}\right).
\end{equation*}
\end{proposition}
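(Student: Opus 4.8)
The plan is to recognize $\mathcal{H}$ as a block Laurent--Toeplitz operator whose \emph{symbol} is precisely the multiplication polynomial $p(t) = \sum_{l=1}^{2T-1} e^{i2\pi l t} h_l \in \mathcal{M}_{p\times r}(\mathbb{C})$, and then to exploit the unitary Fourier correspondence between $\ell_2(\mathbb{Z})$ and $L^2([0,1])$ to transport the action of $\mathcal{H}$ into pointwise multiplication by $p(t)$. Since the Fourier transform is an isometry (Parseval), controlling $|\mathcal{H}|_{2\to 2}$ reduces to controlling the multiplication operator $f \mapsto p(\cdot)f(\cdot)$ on $L^2$, whose norm is in turn governed by $\sup_{t}|p(t)|_{S_\infty}$.

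First I would fix two finitely supported sequences $x = (x_j)_{j\in\mathbb{Z}}$ with $x_j \in \mathbb{R}^r$ and $y = (y_i)_{i\in\mathbb{Z}}$ with $y_i \in \mathbb{R}^p$, normalised so that $|x|_2 = |y|_2 = 1$; working with finitely supported sequences keeps every sum below finite, so no convergence issue arises, and the estimate on this dense class extends to all of $\ell_2$ by the standard continuous extension of a densely defined bounded operator. Writing the block entries of $\mathcal{H}$ as $\mathcal{H}_{i,j} = h_{i-j}$ with $h_l = 0$ outside $\llbracket 1,2T-1\rrbracket$, the associated bilinear form is
\begin{equation*}
  \langle \mathcal{H} x, y\rangle = \sum_{i,j} \langle h_{i-j}\, x_j,\, y_i\rangle .
\end{equation*}
Introducing the trigonometric polynomials $\hat{x}(t) = \sum_j x_j e^{i2\pi j t}$ and $\hat{y}(t) = \sum_i y_i e^{i2\pi i t}$ and using $h_l = \int_0^1 p(t)\,e^{-i2\pi l t}\, dt$, a reindexing by $l = i-j$ together with Parseval's identity yields
\begin{equation*}
  \langle \mathcal{H} x, y\rangle = \int_0^1 \langle p(t)\, \hat{x}(t),\, \hat{y}(t)\rangle\, dt .
\end{equation*}

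From here the estimate is immediate. Bounding the integrand pointwise by the Cauchy--Schwarz inequality and the definition of the spectral norm gives $|\langle p(t)\hat{x}(t), \hat{y}(t)\rangle| \leq |p(t)|_{S_\infty}\, |\hat{x}(t)|_2\, |\hat{y}(t)|_2$, after which a second Cauchy--Schwarz inequality in $L^2([0,1])$ together with Parseval ($\|\hat{x}\|_{L^2} = |x|_2 = 1$ and $\|\hat{y}\|_{L^2} = |y|_2 = 1$) produces
\begin{equation*}
  |\langle \mathcal{H} x, y\rangle| \leq \sup_{t\in[0,1]} |p(t)|_{S_\infty} .
\end{equation*}
Taking the supremum over all finitely supported unit $x$ and $y$ identifies the left-hand side with $|\mathcal{H}|_{2\to 2}$ and delivers the claim.

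The only delicate point is bookkeeping rather than depth: one must fix the Fourier conventions so that the phase factors combine correctly, namely so that $e^{i2\pi j t}$ pairs with $x_j$ and $\overline{e^{i2\pi i t}}$ with $y_i$ under the inner product, reproducing exactly the factor $e^{-i2\pi(i-j)t}$ demanded by the formula for $h_l$. With those conventions in place the interchange of the (finite) sums and the integral is automatic, so I expect no genuine analytic obstacle; the substantive content is simply the passage from the Toeplitz matrix to its symbol.
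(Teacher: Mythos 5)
Your proposal is correct and follows essentially the same route as the paper: both pass to the Fourier side via the Parseval isometry, identify the action of $\mathcal{H}$ with multiplication by the symbol $p(t)=\sum_{l}e^{i2\pi lt}h_l$, and conclude by a pointwise spectral-norm bound followed by Cauchy--Schwarz in $L^2([0,1])$. The only cosmetic difference is that you estimate the bilinear form $\langle \mathcal{H}x,y\rangle$ over unit vectors while the paper bounds $|\mathcal{H}u|_2$ directly by introducing a supremum over $w$; these are the same argument.
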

\begin{proof}
Define the linear operators $\Phi: l_2(\mathbb{Z}) \to L_2(\mathbb{R}^p)$ and $\Psi:l_2(\mathbb{Z}) \to L_2(\mathbb{R}^r)$ such that
$\Phi (u)(t) = \sum \limits_{l=1}^{2T-1} u_l \exp{(i2 \pi l t)}$ and $\Psi (v)(t) = \sum \limits_{l=-\infty}^{+\infty} v_l \exp{(i2 \pi l t)}$. Both are isometries since
\begin{align*}
    |\Phi (u)|_{L_2(\mathbb{R}^p)} & =  \left( \int \limits_{0}^1 \left| \sum \limits_{l=1}^{2T-1} u_l \exp{(i2 \pi l t)}  \right|_2^2 dt\right)^{1/2} \\
    &= \left(  \sum \limits_{l=1}^{2T-1} \sum \limits_{l'=1}^{2T-1} \int \limits_{0}^1 \exp(i2 \pi l t) \exp(-i2 \pi l' t)dt   \innerl{u_l }{ u_{l'} } \right)^{1/2}\\
    &= \left(  \sum \limits_{l=1}^{2T-1} \innerl{u_l }{ u_{l} } \right)^{1/2} = |u|_2^2.
\end{align*}
$\Psi$ is the usual trigonometric isometry. Thus, for $|u|_2 = 1$ we have
\begin{align*}
    |\mathcal{H}u|_2 &= |\Phi\mathcal{H}\Psi^{-1} \Psi u|_{L_2(\mathbb{R}^p)} = |(\Phi\mathcal{H}\Psi^{-1}) (\sum \limits_{l=1}^{2T-1} u_l \exp{(i2 \pi l t)})|_{L_2(\mathbb{R}^p)}\\
    &= \left( \int \limits_{0}^1 \left| \sum \limits_{l=1}^{2T-1} (\Phi\mathcal{H}\Psi^{-1})(u_l) \exp{(i2 \pi l t)}  \right|_2^2 dt\right)^{1/2} \\
    &= \sup \limits_{|w|_2=1}\left( \int \limits_{0}^1 \left| \sum \limits_{l=1}^{2T-1} \sum \limits_{l'=1}^{2T-1} \innerl{h_{l'}^*w }{ u_l } \exp{(i2 \pi l t)}\exp(i2 \pi l' t)  \right|^2 dt \right)^{1/2}\\
    &= \sup \limits_{|w|_2=1} \left( \int \limits_{0}^1 \left| \innerl{ \sum \limits_{l=1}^{2T-1} \exp{(i2 \pi l t)} u_l }{ \sum \limits_{l'=1}^{2T-1} \exp(i2 \pi l' t)h_{l'}^*w }   \right|^2 dt \right)^{1/2}\\
    &\leq \sup \limits_{|w|_2=1} \left( \int \limits_{0}^1 \left|\sum \limits_{l=1}^{2T-1} \exp{(i2 \pi l t)} u_l \right|_2^2 \left|\sum \limits_{l'=1}^{2T-1} \exp(i2 \pi l' t)h_{l'}^*w    \right|_2^2 dt \right)^{1/2}\\
    &\leq \sup \limits_{t \in [0,1]} \left|\sum \limits_{l=1}^{2T-1} \exp(i2 \pi l t)h_{l}    \right|_{S_\infty}\left( \int \limits_{0}^1 \left|\sum \limits_{l=1}^{2T-1} \exp{(i2 \pi l t)} u_l \right|_2^2 dt \right)^{1/2}\\
    &= \sup \limits_{t \in [0,1]} \left( \left|\sum \limits_{l=1}^{2T-1} \exp(i2 \pi l t)h_{l}    \right|_2\right)|u|_2,
\end{align*}
whence, the desired result
\begin{equation*}
    |\mathcal{H}|_{2 \to 2} \leq \sup \limits_{t \in [0,1]} \left( \left|\sum \limits_{l=1}^{2T-1} \exp(i2 \pi l t)h_{l}    \right|_{S_\infty}\right).
\end{equation*}

\end{proof}

\section{Proofs of the remaining results in Theorem \ref{thm: stochatic control of the noise factor}} \label{Appendix: Remaining probabilistic results}
\begin{proof}[Proof of \eqref{eq: second noise term} Theorem \ref{thm: stochatic control of the noise factor}]
The proof is similar to the proof of \eqref{eq: first noise term}. The difference is that $X$ and $W$ are independent and involve different sets of random variables. Recall the definition of the permuted index: 
\begin{equation*}
    \bar{l} = \begin{cases}
    l-2T \quad \text{  if } T+1\leq l\leq 2T-1,\\
l  \qquad \qquad \textrm{otherwise}.
\end{cases}
\end{equation*} 
Define
\begin{align*}
    x &= [u^*_0,\dots,u^*_{N-2},u^*_{N-1}]^* \in \mathbb{R}^{rN}\\
    y &= [w_0^*,\dots,w^*_{N-1}]^* \in \mathbb{R}^{d_0N}\\
    z &= [x^*,y^*]^* \in \mathbb{R}^{(d_0+r)N}\\
    U_l &= [u_{2T-l},u_{2T+1-l},\dots,u_{N-l}]  \in \mathcal{M}_{p  \times \bar{N} }(\mathbb{R}).
\end{align*}
From the definition of $H^{\dagger^*}$ in \textcolor{red}{\ref{Adjoint}} we have
\begin{equation*}
{H^{\dagger^*}}X^*Wh^* = \begin{bmatrix}
&(U_1Wh^*)^*  &\frac{1}{2} (U_2Wh^*)^* &\dots &\frac{1}{T} (U_TWh^*)^* \\
& \frac{1}{2} (U_2Wh^*)^* & \frac{1}{3}(U_3Wh)^* &\dots &\frac{1}{T-1} (U_{T+1}Wh^*)^* \\
&\vdots &\vdots &\vdots &\vdots\\
&\frac{1}{T} (U_TWh^*)^* &\frac{1}{T-1} (U_{T+1}Wh^*)^* &\dots &  (U_{2T-1}Wh^*)^*
\end{bmatrix}.
\end{equation*} 
Define the infinite block Hankel operator $\mathcal{H}: \ l_2(\mathbb{N}) \to l_2(\mathbb{N})$ by the $\mathcal{M}_{p\times r}(\mathbb{R})$ blocks
\begin{align*}
\mathcal{H}_{i,j} = \begin{cases}
1/|\bar{l}|(U_lWh^*)^*  \quad \text{for} \quad (i,j) \in \mathbb{N}^2 \ \text{and} \ 1\leq|i-j|=l \leq 2T-1,\\
0 \quad \text{otherwise}.
\end{cases} 
\end{align*}
Then,
\begin{align*}
    |H^{\dagger^*}X^*Wh^*|_{S_\infty} &\leq \sup \limits_{u \in [0,\ 1]} \left|\sum \limits_{l=1}^{2T-1} \frac{e^{i2 \pi \bar{l} u}}{|\bar{l}|} U_lWh^* \right|_{S_\infty} \\
    &= \sup \limits_{u \in [0\ 1]} \sup \limits_{\substack{|v|_2 = 1\\
    |w|_2 = 1}} \left| \innerl{Wh^*}{\sum \limits_{l=1}^{2T-1} \frac{e^{i2 \pi \bar{l} u}}{|\bar{l}|} U_l^*vw^*} \right|.
\end{align*}
Define, for $l \in [0\ N-2T]$, the vectors $G_l \in \mathcal{M}_{p \times d_0N}(\mathbb{R})$ as
\begin{align*}
        G_l &= [C_0A_0^{2T-1+l},C_0A_0^{2T+l},\dots,C_0,0,\dots,0],
\end{align*}
and for $u \in [0\ 1]$ the matrix valued functions $w_{u,v,w,l} \in \mathcal{M}_{r \times p}(\mathbb{C})$ by $w_{u,v,w,l} = \frac{\exp(i2\pi \bar{l}u)}{|\bar{l}|}vw^*$ for $l \in [1\ 2T-1]$ and the matrix valued functions $W_{u,v,w,l} \in \mathcal{M}_{p \times Nr}(\mathbb{C})$ by
\begin{align*}
W_{u,v,w,k} = \begin{bmatrix}
 & & 0 & w_{u,v,w,2T-1}^* & w_{u,v,w,2T-2}^* &\cdots &w_{u,v,w,1}^* & &
\end{bmatrix} ,    
\end{align*}
with the 1st zero a the $k^{\text{th}}$-position. Define $G$ as $G = [G_0^*,\dots,G_{N-2T}^*]^* $ and $W_{u,v,w}$ as $W_{u,v,w} = [W_{u,v,w,1}^*,\dots,W_{u,v,w,N-2T-1}^*]^*$ satisfying 
\begin{equation*}
    \innerl{\bar{X}\bar{g}^*}{\sum \limits_{l=1}^{2T-1} \frac{e^{i2 \pi \bar{l} u}}{|\bar{l}|} U_l^*vw^*}  = \innerl{W_{u,v,w}x}{Gy}.
\end{equation*}
This gives
\begin{align*}
    |H^{\dagger^*}X^*Wh|_{S_\infty} &\leq  \sup \limits_{u \in [0,\ 1]} \sup \limits_{v \in \mathcal{S}_2^{p-1}}  \left|  \innerl{ \begin{bmatrix} W_{u,v,w} & \\
    & \end{bmatrix} z}{\begin{bmatrix}  &H \\
    & \end{bmatrix} z } \right|, 
\end{align*}
which is the supremum of a second order chaos process defined as follows
\begin{align*}
\chi_{u,v,w} =  \innerl{\begin{bmatrix}  & \\
    H^*W_{u,v,w}& \end{bmatrix} z,}{z}.
\end{align*}
To control the increment of the process we use Hanson Wright inequality which gives us for $t >0$.
\begin{equation*}
    \mathbb{P}\left( |\chi_{u,v,w} - \mathbb{E}(\chi_{u,v,w})|\geq  \sigma_u \sigma_w \left(\sqrt{t} |H^*W_{u,v,w}|_{S_2} + t |H^*W_{u,v,w}|_{S_\infty}  \right) \right) \leq 2 \exp{(-ct)}.
\end{equation*}
Since $ \mathbb{E}(\chi_{u,v,w}) = 0$ we obtain a mixed tail process with probability $1-e^{-t}$
\begin{align*}\begin{array}{ll}
    |\chi_{u_1,v_1,w_1} - \chi_{u_2,v_2,w_2}| \leq  \sigma_u\sigma_w |H|_{S_\infty} \left(\sqrt{t} |W_{u_1,v_1,w_1}-W_{u_2,v_2,w_2}|_{S_2}  \right. \\  \left.  \qquad\qquad\qquad\qquad\qquad\qquad + t |W_{u_1,v_1,w_1}-W_{u_2,v_2,w_2}|_{S_\infty}  \right).
    \end{array}
\end{align*}
The rest of the proof is carried out similar to the proof of \eqref{eq: first noise term} to obtain the following bound that holds with probability at least $1-\exp{(-t)}$, for $t \geq 1$.
\begin{equation*}
       |{H^{\dagger}}^{*}X^*Wh|_{S_\infty} \lesssim \sigma_u\sigma_w|H|_{S_\infty} \left( \sqrt{\frac{\log(T) + p}{N}} + \frac{\log^2(T) + p\log(T)}{N} + \sqrt{\frac{t}{N }} +  \frac{\log(T)t}{N} \right).
\end{equation*}
\end{proof}

\begin{proof}[Proof of \eqref{eq: third noise term} Theorem \ref{thm: stochatic control of the noise factor}]
Again we follow similar steps to the proof of \eqref{eq: first noise term}. We take the following definitions 
\begin{align*}
    x &= [u^*_0,\dots,u^*_{N-2},u^*_{N-1}]^* \in \mathbb{R}^{rN}\\
    \varepsilon &= [v_{2T}^*,\dots,v^*_N]^* \in \mathbb{R}^{p\bar{N}}\\
    y &= [x^*,\varepsilon^*]^* \in \mathbb{R}^{rN+p\bar{N}}\\
    U_l &= [u_{2T-l},u_{2T+1-l},\dots,u_{N-l}]  \in \mathcal{M}_{p  \times \bar{N} }(\mathbb{R}).
\end{align*}
From the definition of $H^{\dagger^*}$ in \textcolor{red}{\ref{Adjoint}} we have
\begin{equation*}
{H^{\dagger^*}}X^*\varepsilon = \begin{bmatrix}
&(U_1\varepsilon)^*  &\frac{1}{2} (U_2\varepsilon)^* &\dots &\frac{1}{T} (U_T\varepsilon)^* \\
& \frac{1}{2} (U_2\varepsilon)^* & \frac{1}{3}(U_3\varepsilon)^* &\dots &\frac{1}{T-1} (U_{T+1}\varepsilon)^* \\
&\vdots &\vdots &\vdots &\vdots\\
&\frac{1}{T} (U_T\varepsilon)^* &\frac{1}{T-1} (U_{T+1}\varepsilon)^* &\dots &  (U_{2T-1}\varepsilon)^*
\end{bmatrix}.
\end{equation*} 
Define the infinite block Hankel operator $\mathcal{J}: \ l_2(\mathbb{N}) \to l_2(\mathbb{N})$ by the $\mathbb{R}^p$ blocks
\begin{align*}
\mathcal{J}_{i,j} = \begin{cases}
1/|\bar{l}|(U_l\varepsilon)^*  \quad \text{for} \quad (i,j) \in \mathbb{N}^2, \ \text{and} \ 1\leq|i-j|=l \leq 2T-1,\\
0 \quad \text{otherwise}.
\end{cases} 
\end{align*}
Then
\begin{align*}
    |H^{\dagger^*}X^*\varepsilon|_{S_\infty} &\leq \sup \limits_{u \in [0,\ 1]} \left|\sum \limits_{l=1}^{2T-1} \frac{e^{i2 \pi \bar{l} u}}{|\bar{l}|} U_l\varepsilon \right|_{S_\infty} \\
    &= \sup \limits_{u \in [0\ 1]} \sup \limits_{\substack{|v|_2 = 1\\
    |w|_2 = 1}} \left| \innerl{\varepsilon}{\sum \limits_{l=1}^{2T-1} \frac{e^{i2 \pi \bar{l} u}}{|\bar{l}|} U_l^*vw^*} \right|
\end{align*}
For $u \in [0\ 1]$ define the matrix valued functions $w_{u,v,w,l} \in \mathcal{M}_{r \times p}(\mathbb{C})$ by $w_{u,v,w,l} = \frac{\exp(i2\pi \bar{l}u)}{|\bar{l}|}vw^*$ for $l \in [1\ 2T-1]$ and the matrix valued functions $W_{u,v,w,l} \in \mathcal{M}_{p \times Nr}(\mathbb{C})$ by
\begin{align*}
W_{u,v,w,k} = \begin{bmatrix}
 & & 0 & w_{u,v,w,2T-1}^* & w_{u,v,w,2T-2}^* &\cdots &w_{u,v,w,1}^* & &
\end{bmatrix} ,    
\end{align*}
with the 1st zero a the $k^{\text{th}}$-position. Put them together in 
$W_{u,v,w}$ since
$$
W_{u,v,w} = [W_{u,v,w,1}^*,\dots,W_{u,v,w,N-2T-1}^*]^*
$$ which satisfy 
\begin{equation*}
    \innerl{\varepsilon}{\sum \limits_{l=1}^{2T-1} \frac{e^{i2 \pi \bar{l} u}}{|\bar{l}|} U_l^*vw^*}  = \innerl{W_{u,v,w}x}{\varepsilon}.
\end{equation*}
This gives
\begin{align*}
    |H^{\dagger^*}X^*\varepsilon|_{S_\infty} &\leq  \sup \limits_{u \in [0,\ 1]} \sup \limits_{v \in \mathcal{S}_2^{p-1}}  \left|  \innerl{ \begin{bmatrix} W_{u,v,w} & \\
    & \end{bmatrix} z}{\begin{bmatrix}  &I_N \\
    & \end{bmatrix} z } \right|, 
\end{align*}
which is the supremum of a second order chaos process defined as follows
\begin{align*}
\chi_{u,v,w} =  \innerl{\begin{bmatrix}  & \\
    W_{u,v,w}& \end{bmatrix} y,}{y}.
\end{align*}
To control the increment of the process we use Hanson Wright inequality which gives us for $t > 0$.
\begin{equation*}
    \mathbb{P}\left( |\chi_{u,v,w} - \mathbb{E}(\chi_{u,v,w})|\geq  {\sigma_v}^2  \left(\sqrt{t} |W_{u,v,w}|_{S_2} + t |W_{u,v,w}|_{S_\infty}  \right) \right) \leq 2 \exp{(-ct)}.
\end{equation*}
Since $ \mathbb{E}(\chi_{u,v,w}) = 0$ we obtain a mixed tail process with probability $1-e^{-t}$
\begin{align*}
    |\chi_{u_1,v_1,w_1} - \chi_{u_2,v_2,w_2}| &\leq  \sigma_v^2 \left(\sqrt{t} |W_{u_1,v_1,w_1}-W_{u_2,v_2,w_2}|_{S_2} + t |W_{u_1,v_1,w_1}-W_{u_2,v_2,w_2}|_{S_\infty}  \right).
\end{align*}
The rest of the proof is carried out similarly to the proof of \eqref{eq: first noise term} to obtain the following bound that holds with probability at least $1-\exp{(-t)}$ for $t \geq 1$:
\begin{equation*}
       |{H^{\dagger}}^{*}X^*\varepsilon|_{S_\infty} \lesssim \sigma_v^2 \left( \sqrt{\frac{\log(T) + p}{N}} + \frac{\log^2(T) + p\log(T)}{N} + \sqrt{\frac{t}{N }} +  \frac{\log(T)t}{N} \right).
\end{equation*}
\end{proof}

\end{document}